\tikzstyle{vecArrow} = [thick, decoration={markings,mark=at position
\tikzstyle{innerWhite} = [semithick, white,line width=1.4pt, shorten >= 4.5pt]
\DeclareMathOperator*{\esssup}{ess\,sup}
\newcommand{\calK}{\mathcal{K}}
\newcommand{\calL}{\mathcal{L}}
\newcommand{\Rpn}{\R_{0}^{+}}
\newcommand{\C}{\mathbb{C}}
\newcommand{\Cp}{\C_{+}}
\newcommand{\N}{\mathbb{N}}
\newcommand{\R}{\mathbb{R}}
\newcommand{\Z}{\mathbb{Z}}
\renewcommand{\Re}{\operatorname{Re}}
\newcommand{\vertiii}[1]{{\left\vert\kern-0.25ex\left\vert\kern-0.25ex\left\vert #1 
    \right\vert\kern-0.25ex\right\vert\kern-0.25ex\right\vert}}
\def\text{\textrm}
 \newtheorem{theorem}{Theorem}[section]
 \newtheorem{lemma}[theorem]{Lemma}
 \newtheorem{proposition}[theorem]{Proposition}
 \theoremstyle{definition}
 \newtheorem{definition}[theorem]{Definition}
 \theoremstyle{remark}
 \newtheorem{rem}[theorem]{Remark}
 \newtheorem{example}[theorem]{Example}
 \numberwithin{equation}{section}
\begin{document}

\title[Infinite-dimensional ISS and Orlicz spaces]{Infinite-dimensional input-to-state stability and Orlicz spaces}

%
   %

\date{}

%
\author[B.~Jacob]{Birgit Jacob}
\address{Functional analysis group, School of Mathematics and Natural Sciences,
        University of Wuppertal, D-42119 Wuppertal, Germany} 
\email{bjacob@uni-wuppertal.de}
\author[R.~Nabiullin]{Robert Nabiullin}
\address{Functional analysis group, School of Mathematics and Natural Sciences,
        University of Wuppertal, D-42119 Wuppertal, Germany} 
\email{nabiullin@math.uni-wuppertal.de}
\author[J.R.~Partington]{Jonathan R.~Partington}
\address{School of Mathematics, University of Leeds, Leeds LS2 9JT, Yorkshire, United Kingdom}
\email{j.r.partington@leeds.ac.uk}
\author[F.L.~Schwenninger]{Felix L.~Schwenninger}
\address{Department of Mathematics, Center for Optimization and Approximation, University of Hamburg, 
Bundesstra\ss e 55, D-20146 Hamburg, Germany}
\email{felix.schwenninger@uni-hamburg.de}
\thanks{The second author is supported by Deutsche Forschungsgemeinschaft (Grant JA 735/12-1)}

\begin{abstract}
In this work, the relation between input-to-state stability and integral input-to-state stability is studied for linear infinite-dimensional systems with an unbounded control operator. Although a special focus is laid on the case $L^{\infty}$, general function spaces are considered for the inputs. We show that integral input-to-state stability can be characterized in terms of input-to-state stability with respect to Orlicz spaces. Since we consider linear systems, the results can also be formulated in terms of admissibility.
 For parabolic diagonal systems with scalar inputs, both stability notions with respect to $L^\infty$ are equivalent.
\end{abstract}

\keywords{
 Input-to-state stability, integral input-to-state stability, $C_0$-semi\-groups, admissibility, Orlicz spaces
}
 
\subjclass[2010]{93D20, 93C05, 93C20, 37C75}

\maketitle

\section{Introduction}
In systems and control theory, the question of stability is a fundamental issue. 
Let us consider the situation where the relation between the input (function) $u$ and the state $x$  is governed by the autonomous equation
\begin{equation}\label{intro:eq1}
\dot{x} = f(x,u), \quad x(0)=x_{0}.
\end{equation}
One can then distinguish between \textit{external stability}, that is, stability with respect to the input $u$,  and {\it internal stability}, i.e.~when $u=0$. For the moment, $f$ is assumed to map from $\R^{n}\times \R^m$ to $\R^n$, and to be such that solutions $x$ 
exist on $[0,\infty)$ for all inputs $u$ in a function space $Z$. Already from this very general view-point, it seems clear that stability notions may strongly depend on the specific choice of $Z$ (and its norm).
The concept
 of \textit{input-to-state stability}
(ISS) combines both external and internal stability in one notion. If $Z$ is chosen to be $L^{\infty}(0,\infty;U)$, $U=\R^{m}$, a system is called ISS (with respect to $L^{\infty}$) if 
there exist functions $\beta\in \mathcal{KL}$, $\gamma\in\mathcal{K}$ such that
\[\|x(t)\| \leq \beta(\|x_{0}\|,t) + \gamma(\esssup_{s\in[0,t]}\|u(s)\|_{U}),\]
for all $t>0$ and $u\in Z$. Here the sets $\mathcal{KL}$ and $\mathcal{K}$ refer to the classic comparison functions from nonlinear systems theory, see Section \ref{sec:2}. 
Introduced by E.~Sontag in 1989 \cite{Sontag89ISS}, ISS has been  intensively studied in the past decades; see \cite{sontagISS} for a survey.\\
A related stability notion is {\it integral input-to-state stability} (iISS)  \cite{Sontag98, AnSoWa00}, which means that for some $\beta\in \mathcal{KL}$, $\theta\in\mathcal{K}_{\infty}$ and $\mu\in\mathcal{K}$, 
\begin{equation}\label{intro:eq2}
\|x(t)\| \leq \beta(\|x_{0}\|,t) +\theta\left(\int_{0}^{t}\mu(\|u(s)\|)_{U})\,ds\right), 
\end{equation}
for all $t>0$ and {$u\in Z=L^{\infty}(0,\infty;U)$}. {This property differs from ISS in the sense that it allows for unbounded inputs $u$ that have ``finite energy'', see \cite{Sontag98}. Many practically relevant systems are iISS whereas they are not ISS, see e.g.~\cite{MiI14,MiI16} for a detailed list. However, }for linear systems, i.e., $f(x,u)=Ax+Bu$ with matrices $A$ and $B$, iISS is  equivalent to ISS. 
To some extent, this observation marks the starting point of this work.\par
In contrast to the well-established theory for finite-dimensions, a more intensive study of 
(integral) input-to-state stability for infinite-di\-men\-sional systems has  only begun recently. We refer to \cite{DaM13,DaM13b,JaLoRy08,Krstic16MTNS,KK2016IEEETac,Log13,Mir16,MiI14,MiI15b,MiW15,MiI16}. 
By nature, in the infinite-dimensional setting, the stability notions from finite-dimensions are more subtle.  We refer to \cite{MiW16} for a listing of failures of equivalences around ISS known from finite-dimensional systems. 
In most of the mentioned infinite-dimensional references, 
systems of the form \eqref{intro:eq1} with $f \colon X \times U\rightarrow X $ and Banach spaces $X$ and $U$ are considered. 
For linear equations, this setting corresponds to evolution equations of the form
\begin{equation}\label{eq:sys1}\dot{x}(t)=Ax(t)+Bu(t), \quad x(0)=x_{0},\end{equation}
where $B$ is a bounded control operator (note that for fixed $t$, $x(t)=x(t,\cdot)$ is a function and $\dot{x}$ denotes the time-derivative).
 Analogously to finite-dimensions, in this case,  ISS and iISS are known to be equivalent, see e.g., \cite[Corallary 2]{MiI16} and Proposition \ref{prop1b} below. However, concerning applications the requirement of bounded control operators $B$ is rather restrictive. Typical examples for systems which  only allow for a formulation with an unbounded $B$ are boundary control systems. It is clear that such 
phenomena cannot occur for linear systems in finite-dimensions. 
 
The main point of this paper is to relate and characterize (integral) input-to-state stability for linear, infinite-dimensional systems with unbounded control operators, i.e.\ systems of the form \eqref{eq:sys1} with unbounded operators $B$.  This is done by using the notion of \textit{admissibility}, \cite{salamon,Weiss89ii}, which also reveals the connection of the mentioned stability types with the boundedness of the linear mapping
\[Z\to X, \qquad u\mapsto x(t)\]
(for $x_{0}=0$). It is not surprising that the choice of topology for $Z$, the space of inputs $u$, is crucial here. However, looking at \eqref{intro:eq2} for $x_{0}=0$, it is not clear how the right-hand side could define a norm {for general functions $\mu$ and $\theta$.
 The question of the right norm for $Z$ motivates one to study ISS and iISS with respect to general spaces $Z$ -- not only $Z=L^{\infty}=L^{\infty}(0,\infty;U)$.  For the precise definition of these notions, we refer to Section \ref{sec:2}. We show that $Z$-ISS and $Z$-iISS are equivalent for $Z=L^{p}=L^{p}(0,\infty;U)$, $p\in[1,\infty)$. However, it turns out that this paves the way to characterize $L^{\infty}$-iISS in terms of ISS. More precisely, we will show that $L^\infty$-iISS is equivalent to  ISS with respect to some {\it Orlicz space}.} This is one of the main results of this work. Orlicz spaces (or Orlicz--Birnbaum spaces) appear naturally as generalizations of $L^{p}$-spaces and ISS with respect to such spaces can thus be seen as a generalization of classical stability notions. {Other choices for general input functions have been made in the literature -- like admissibility with respect to Lorentz spaces \cite{Haak2012, Wy10} or $Z$-ISS with $Z$ being a Sobolev space \cite{JPP14, MiI15b}. } \\
{As we will see, it is plain that  $Z$-iISS always implies $Z$-ISS for linear systems}. The converse direction, for $Z=L^\infty$, remains open in general. 
It is known that ISS is equivalent to admissibility (together with exponential stability). We will show that  $L^{\infty}$-iISS in fact implies \textit{zero-class admissibility} \cite{ZeroclassJPP,XuZeroclass}, which is slightly stronger than admissibility, see Proposition \ref{prop:zeroclass}.
In Table \ref{table:overview}, the relation of $L^{\infty}$-ISS and $L^{\infty}$-iISS in the various above-mentioned settings is depicted schematically.

In Section \ref{sec:2}, we will discuss the setting and formally introduce the stability notions mentioned above. This includes a general abstract definition of ISS, iISS and admissibility with respect to some function space $Z$. Furthermore, we will give some basic facts about their relation. 

Section \ref{sec:Orlicz} deals with the characterization of ISS and iISS in terms of Orlicz-space-admissibility. As a main result, we show that $L^\infty$-iISS is equivalent to ISS with respect to some Orlicz space $E_{\Phi}$, where $\Phi$ denotes a Young function, Theorem \ref{thm:main}. Moreover, we show that ISS with respect to an Orlicz space is a natural generalization of classic $L^p$-ISS that ``interpolates'' the notions of $L^1$- and $L^{\infty}$-ISS, Theorems \ref{thm:genLp} and \ref{thm:main2}.

In Section \ref{sec:diag.syst}, we consider parabolic diagonal systems with scalar input. More precisely, we assume that $A$ possesses a Riesz  basis of eigenvectors with eigenvalues lying in a sector in the open left half-plane.  For this class of systems we show that $L^{\infty}$-ISS implies ISS with respect to some Orlicz space and thus, by the results of Section \ref{sec:Orlicz}, the equivalence between iISS and ISS, known in finite dimensions, holds for this class of systems. Moreover, it turns out that any linear, bounded operator from $U$ to the extrapolation space $X_{-1}$ is $L^{\infty}$-admissible, which yields a characterization of ISS. The results of this section partially generalize results that were already indicated in \cite{CDCJNPS16}.

 We illustrate the obtained results by examples in Section \ref{sec:ex}. In particular, we present a parabolic diagonal system which is $L^{\infty}$-ISS, but not $L^{p}$-ISS for any $p\in[1,\infty)$. Finally, we conclude by drawing a connection between the question whether $L^{\infty}$-ISS implies $L^{\infty}$-iISS and a problem due to G.~Weiss.

\begin{table}
\center
\def\arraystretch{1.5}%
\begin{tabular}{c||c|c|c}
& \parbox[t]{1.9cm}{Eq.~\eqref{eq:sys1},\\ $B$ bounded}&\parbox[t]{2.3cm}{Eq.~\eqref{eq:sys1}, \\$B$ unbounded\vspace{0.2cm} } &\parbox[t]{2cm}{ Eq.~\eqref{intro:eq1},\\ $f$ nonlinear}\\
\hline\hline
$\mathrm{dim}\, X<\infty$&ISS $\Longleftrightarrow$ iISS&ISS $\iff$ iISS&ISS $\Longrightarrow \atop{\centernot\Longleftarrow}$ iISS\\
\hline
$\mathrm{dim}\, X=\infty$&ISS $\iff$ iISS& ISS $\impliedby\atop \left(\stackrel{?}{\Longrightarrow}\right)$ iISS& not clear \\
\hline
\end{tabular}
\medskip

\label{table:overview}
\caption{The relation between ISS and iISS (with respect to $L^{\infty}$) in various settings.}
\end{table}

\section{Stability notions for infinite-dimensional systems}\label{sec:2}
\subsection{The setting and definitions}\label{sec:2.1}
In this article we study systems $\Sigma(A,B)$ of the following form
\begin{equation}\label{eqn1}
\dot{x}(t)=Ax(t)+Bu(t), \qquad x(0)=x_0, \quad t\ge 0,
\end{equation}
where $A$ generates a $C_0$-semigroup $(T(t))_{t\ge 0}$ on a Banach space $X$ and $B$ is a linear and bounded operator from a Banach space $U$ to the extrapolation space $X_{-1}$. 
Note that $B$ is possibly unbounded from $U$ to $X$.
 Here $X_{-1}$ is the completion of $X$ with respect to the norm 
\[ \|x\|_{X_{-1}}= \|(\beta -A)^{-1}x\|_X,\]
for some $\beta\in \rho(A)$, the resolvent set of $A$. It can be shown that the semigroup  $(T(t))_{t\ge 0}$ possesses a unique extension to a $C_{0}$-semigroup  $(T_{-1}(t))_{t\ge 0}$ on $X_{-1}$ with generator $A_{-1}$, which is an extension of $A$.
Thus we may consider equation \eqref{eqn1} on the Banach space $X_{-1}$ and therefore for $u\in L^1_{loc}(0,\infty;U)$,  the \textit{(mild) solution of  \eqref{eqn1}} is given by the variation of parameters formula
\begin{equation} \label{eqn2}
x(t)=T(t)x_0+\int_0^t T_{-1}(t-s)B u(s) \, ds, \qquad t\ge 0.
\end{equation}

In this paper, we will consider the following types of function spaces. 
For a Banach space $U$, let $Z\subseteq L_{loc}^{1}(0,\infty;U)$ be such that for all $t>0$
\begin{enumerate}[label=(\alph*)]
\item  $Z(0,t;U):=\{f\in Z \mid f|_{[t,\infty)} = 0\}$ becomes a Banach space of functions on the interval $(0,t)$ with values in $U$ (in the sense of equivalence classes w.r.t.\ equality almost everywhere),
\item $Z(0,t;U)$ is continuously embedded in $L^1(0,t; U)$,  that is, there exists $\kappa(t)>0$ such that for all $f\in Z(0,t;U)$ it holds that $f\in L^{1}(0,t;U)$ and 
\[ \|f\|_{L^{1}(0,t;U)}\leq \kappa(t)\|f\|_{Z(0,t;U)}. \]
\item  For $u\in Z(0,t;U)$ and  $s>t$ we have $\|u\|_{Z(0,t;U)}=\| u \|_{Z(0,s;U)}$.
\item {  $Z(0,t;U)$ is invariant under the left-shift and reflection, i.e., $S_{\tau}Z(0,t;U)\subset Z(0,t;U)$ and $R_{t}Z(0,t;U)\subset Z(0,t;U)$, where 
$$S_{\tau}u=u(\cdot+\tau), \quad R_{t}u=u(t-\cdot),$$ and $\tau>0$. Furthermore, $\|S_{\tau}\|_{Z(0,t;U)}\leq 1$  and $R_{t}$ is isometric.}
\item For all $u\in Z$ and $0<t<s$ it holds that $u|_{(0,t)}\in Z(0,t;U)$ and
\[\|u|_{(0,t)}\|_{Z(0,t;U)} \leq \|u|_{(0,s)}\|_{Z(0,s;U)}.\]
\end{enumerate}
 If additionally we have in (b) that
 \begin{equation}\tag{B}\label{cond:B}
 \kappa(t)\to 0, \quad  \textrm{ as } t\searrow 0,
 \end{equation}
 then we say that $Z$ satisfies {\em condition (B)}.

For example,  $Z=L^{p}$ refers to the spaces $L^p(0,t;U)$, $t>0$, for fixed $1\leq p\leq \infty$ and $U$. 
Other examples can be given by Sobolev spaces and the  Orlicz spaces $L_\Phi(0,t; U)$ and $E_\Phi(0,t;U)$, see the appendix. 
 If $p>1$ (including $p=\infty$) and $\Phi$ is a Young function, then $L^p$,  $E_\Phi$ and $L_\Phi$ satisfy Condition (B),
thanks to H\"older's inequality. Clearly, $L^1$ does not satisfy condition (B).

 In general, the state $x(t)$ given by  \eqref{eqn2} lies in $X_{-1}$  for $u\in L_{loc}^{1}$ and $t>0$. The notion of \textit{admissibility} ensures that  indeed $x(t)\in X$.
\begin{definition}\label{def:iISS}
 We call the  system $\Sigma(A,B)$  {\em admissible with respect to $Z$} (or {\em $Z$-admissible}), if 
\begin{equation}\label{eq:Admissibility}
\int_{0}^{t}T_{-1}(s)Bu(s)\, ds \in X
\end{equation}
\end{definition}
for all $t>0$ and $u\in Z(0,t;U)$. If $\Sigma(A,B)$ is admissible with respect to $Z$, then all mild solutions  \eqref{eqn2} are in $X$ and by the closed graph theorem there exists a constant $c(t)$ (take the infimum over all possible constants) such that
\begin{equation}\label{eq:adm2}
	\left\|\int_{0}^{t}T_{-1}(s)Bu(s)\, ds\right\| \leq c(t) \|u\|_{Z(0,t;U)}.
\end{equation}
Moreover, it is easy to see that $\Sigma(A,B)$ is admissible if \eqref{eq:Admissibility} holds for one $t>0$.

\begin{definition}
We call the  system $\Sigma(A,B)$ {\em infinite-time admissible with respect to $Z$} (or {\em $Z$-infinite-time admissible}), if the system is admissible with respect to $Z$ and $c_\infty:=\sup_{t>0} c(t)$ is finite. We call the  system $\Sigma(A,B)$  {\em zero-class admissible with respect to $Z$} (or {\em $Z$-zero-class admissible}), if it is admissible with respect to $Z$ and $\lim_{t \to 0} c(t) = 0$.
\end{definition}

\begin{rem}\label{remadm}
Clearly, zero-class admissibility and infinite-time admissibility imply admissibility respectively.\newline
\end{rem}

Since $Z\subseteq L_{loc}^{1}(0,\infty;U)$, for any $u\in Z$ and any initial value $x_{0}$, the mild solution $x$ of \eqref{eqn1} is continuous as function from $[0,\infty)$ to $X_{-1}$. Next we show that zero-class admissibility guarantees that $x$ even lies in $C(0,\infty;X)$.
\begin{proposition}\label{prop:zeroclass2}
If $\Sigma(A,B)$ is $Z$-zero-class admissible, then for every $x_0 \in X$ and every $u \in Z$ the mild solution of \eqref{eqn1}, given by \eqref{eqn2}, satisfies $x \in C([0,\infty);X)$. 
\end{proposition}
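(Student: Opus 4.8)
The plan is to write the mild solution as $x(t)=T(t)x_0+z(t)$ with $z(t):=\int_0^t T_{-1}(t-s)Bu(s)\,ds$, and to prove that each summand is continuous from $[0,\infty)$ into $X$. The first term is the orbit of $x_0\in X$ under the $C_0$-semigroup and is therefore continuous, so everything reduces to the convolution term $z$. Since $\Sigma(A,B)$ is $Z$-zero-class admissible, $z(t)\in X$ for every $t$ and, by \eqref{eq:adm2}, $\|z(t)\|\le c(t)\,\|u|_{(0,t)}\|_{Z(0,t;U)}$ with $c(t)\to0$ as $t\searrow0$. Throughout I would use the semigroup law together with the shift bound in (d), the restriction bound in (e) and the invariance in (c) to re-express pieces of $z$ over sub-windows as admissibility integrals of left-shifted inputs; the key elementary fact is that for $0\le a<b$
\[
\int_a^b T_{-1}(b-s)Bu(s)\,ds=\int_0^{b-a}T_{-1}(b-a-\sigma)B(S_a u)(\sigma)\,d\sigma ,
\]
whose norm is at most $c(b-a)\,\|u|_{(0,t)}\|_{Z(0,t;U)}$ whenever $b\le t$, by \eqref{eq:adm2} together with (c)--(e).

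For right-continuity at $t\ge0$ I would split $z(t+h)$ at $s=t$ and apply the semigroup law to the lower part; since $z(t)\in X$,
\[
z(t+h)=T(h)z(t)+\int_0^{h}T_{-1}(h-\sigma)B(S_t u)(\sigma)\,d\sigma .
\]
The first term converges to $z(t)$ by strong continuity of $(T(t))_{t\ge0}$ on $X$, and the second has norm at most $c(h)\,\|u|_{(0,t+1)}\|_{Z(0,t+1;U)}$ for $h\le 1$ by the fact above, which tends to $0$ because $c(h)\to0$. Hence $z$ is right-continuous.

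The delicate point is left-continuity at $t>0$, which I expect to be the main obstacle. Splitting $z(t)$ at $s=t-h$ gives, again by the semigroup law, $z(t)=T(h)z(t-h)+\int_0^{h}T_{-1}(h-\sigma)B(S_{t-h}u)(\sigma)\,d\sigma$, where the last integral tends to $0$ as before. Thus everything comes down to showing $(T(h)-I)z(t-h)\to0$; the difficulty is that strong continuity of the semigroup is not uniform, while the argument $z(t-h)$ is itself moving with $h$, so one cannot simply pass to the limit. My plan to overcome this is to split $z(t-h)$ at a \emph{fixed} time $t-2\eta$ (with $0<h<\eta<t/2$) rather than at a time depending on $h$. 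Writing $r(h,\eta):=\int_{t-2\eta}^{t-h}T_{-1}(t-h-s)Bu(s)\,ds$ for the short window, the semigroup law on $(0,t-2\eta)$ factors out $T_{-1}(2\eta-h)=T(2\eta-h)$ and turns the remaining part into $T(2\eta-h)z(t-2\eta)$, the orbit of the \emph{fixed} vector $z(t-2\eta)\in X$; moreover $\|r(h,\eta)\|\le\big(\sup_{0<s\le2\eta}c(s)\big)\,\|u|_{(0,t)}\|_{Z(0,t;U)}$ since that window has length at most $2\eta$. Applying $T(h)-I$ and using $T(h)T(2\eta-h)=T(2\eta)$ yields
\[
(T(h)-I)z(t-h)=\big(T(2\eta)-T(2\eta-h)\big)z(t-2\eta)+(T(h)-I)\,r(h,\eta).
\]

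Now an iterated limit finishes the argument. For each fixed $\eta$, the first term on the right tends to $0$ as $h\searrow0$, because $r\mapsto T(r)z(t-2\eta)$ is continuous (strong continuity applied to the single fixed vector $z(t-2\eta)\in X$), while the second term is bounded by $(M+1)\big(\sup_{0<s\le2\eta}c(s)\big)\|u|_{(0,t)}\|_{Z(0,t;U)}$ with $M:=\sup_{0\le r\le1}\|T(r)\|<\infty$. Hence $\limsup_{h\searrow0}\|(T(h)-I)z(t-h)\|\le(M+1)\big(\sup_{0<s\le2\eta}c(s)\big)\|u|_{(0,t)}\|_{Z(0,t;U)}$ for every small $\eta>0$, and letting $\eta\searrow0$ the right-hand side vanishes since $\sup_{0<s\le2\eta}c(s)\to0$ (zero-class admissibility). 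This proves $(T(h)-I)z(t-h)\to0$, hence left-continuity, and together with right-continuity and the continuity of $T(\cdot)x_0$ we conclude $x\in C([0,\infty);X)$. The only genuinely non-routine step is the left-continuity, and specifically the idea of \emph{freezing} the splitting time $t-2\eta$ so as to convert the troublesome moving term into the orbit of a fixed vector plus a uniformly small tail.
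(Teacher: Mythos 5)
Your argument is correct, but it takes a genuinely different (and longer) route than the paper's. The paper works with the integral $\Phi_t u=\int_0^t T_{-1}(s)Bu(s)\,ds$ that appears in its definition of admissibility, i.e.\ the time-reflection of the convolution you use. For that function both one-sided increments collapse to a single short-window integral premultiplied by a bounded semigroup operator, e.g.\ $\Phi_t u-\Phi_{t-h}u=T(t-h)\int_0^hT_{-1}(s)Bu(s+t-h)\,ds$, whose norm is at most $\|T(t-h)\|\,c(h)\,\|u\|_{Z(0,t;U)}\to0$; no term of the form $(T(h)-I)(\text{moving vector})$ ever appears, so the proof is a few lines and strong continuity of the semigroup is never invoked on a moving argument. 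You instead work directly with the mild solution $z(t)=\int_0^tT_{-1}(t-s)Bu(s)\,ds$, where the left increment does produce $(T(h)-I)z(t-h)$ with a moving argument; your device of freezing the splitting time at $t-2\eta$, so that the troublesome piece becomes the orbit of the fixed vector $z(t-2\eta)$ plus a tail of size $O\bigl(\sup_{0<s\le2\eta}c(s)\bigr)$, followed by the iterated limit $h\searrow0$ then $\eta\searrow0$, resolves this correctly. What your approach buys is that it treats the convolution form of the solution head-on; what the paper's buys is brevity, the two forms being interchangeable via the reflection isometry. One bookkeeping remark: the shift and reflection invariance you need is property (d) and the extension-by-zero property is (c), so your parenthetical references to (c) and (d) are swapped, but the estimates you actually use are the right ones.
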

\begin{proof}
Since $x$ is given by \eqref{eqn2}, it suffices to consider the case $x_{0}=0$. Let $u\in Z$. We have to show that $t\mapsto \Phi_{t}u:=\int_{0}^{t}T_{-1}(s)Bu(s)\,ds$  is continuous. The proof is divided into two steps.\\
First, note that $t\mapsto \Phi_{t}u$ is right-continuous on $[0,\infty)$. In fact, by
\begin{align*}
 \Phi_{t+h}u - \Phi_{t}u = {}& T(t)\int_{0}^{h}T_{-1}(s)Bu(s+t)\,ds,
\end{align*}
$h>0$,
and $Z$-zero-class admissibility, it follows that 
\[\|\Phi_{t+h}u - \Phi_{t}u\| \leq c(h)\|T(t)\|\|u(\cdot+t)\|_{Z(0,h;U)}\to 0\]
for $h\searrow 0$ (where we used properties (d), (e) of $Z$).\\
Second, we show that $t\mapsto\Phi_{t}$ is left-continuous on $(0,\infty)$.  Since $(\Phi_{t}-\Phi_{t-h})u=(\Phi_{t}-\Phi_{t-h})u|_{(0,t)}$, we can assume that $u\in Z(0,t;U)$. Clearly,
\[(\Phi_{t}-\Phi_{t-h})u = T(t-h)\int_{0}^{h}T_{-1}(s)Bu(s+t-h)\,ds.\]
It follows that
\begin{align*}
\left \|\int_{0}^{h}T_{-1}(s)Bu(s+t-h)\,ds \right \|\leq{}& c(h)\|u(\cdot+t-h)\|_{Z(0,h;U)}\\ 
\leq{}& c(h)\|u(\cdot+t-h)\|_{Z(0,t;U)}\\
\leq{}& c(h)\|u\|_{Z(0,t;U)}\stackrel{h\searrow0}{\longrightarrow}0,
\end{align*}
where the last two inequalities hold by properties (e) and (d) of $Z$. Since $(T(t))_{t \geq 0}$ is uniformly bounded on compact intervals, we conclude that $ \|\Phi_{t+h}u - \Phi_{t}u \|\to 0$ as $h\rightarrow 0$.
\end{proof}
\begin{rem}
  If $\Sigma(A,B)$ is admissible with respect to $L^p$, $1\leq p < \infty$, then, by H\"older's inequality,  $\Sigma(A,B)$ is $L^q$-zero-class admissible for any $q>p$. Thus, Proposition \ref{prop:zeroclass2} implies that the mild solution of \eqref{eqn1} lies in $C(0,\infty;X)$ for all $u\in L^{q}$. Moreover, this continuity even holds for $u\in L^{p}$, which was already shown by G.~Weiss in his seminal paper \cite[Prop.~2.3]{Weiss89ii} on admissible control operators. However, there, a direct, but similar proof is used without using the notion of zero-class admissibility. 
  \newline
  As stated in \cite[Problem 2.4]{Weiss89ii}, it is an interesting open problem whether the continuity of $x$ is implied by $L^\infty$-admissibility. By Proposition \ref{prop:zeroclass2},  the answer is `yes' in the case of $L^{\infty}$-zero-class admissibility. See also Section \ref{sec:Weissproblem}.
\end{rem}

To introduce input-to-state stability, we will need the following well-known function classes from Lyapunov theory. Here, $\Rpn$ denotes the set of nonnegative real numbers.
\begin{align*} 
\calK ={}& \{\mu \colon \Rpn\rightarrow \Rpn \:|\: \mu(0)=0,\, \mu\text{ continuous, }  \text{strictly increasing}\},\\
 \calK_\infty ={}& \{\theta\in\calK \:|\:  \lim_{x\to\infty} \theta(x)=\infty\},\\
\mathcal{L}={}&\{\gamma \colon \Rpn\rightarrow\Rpn\:|\:\gamma \text{ continuous, } \text{strictly decreasing,} \lim_{t\to\infty}\gamma(t)=0 \},\\
\mathcal{KL} = {}&\{ \beta \colon (\Rpn)^{2}\rightarrow\Rpn\: | \: \beta(\cdot,t)\in\calK\ \forall t \geq 0 \text{ and }\beta(s,\cdot)\in\mathcal{L}\ \forall s> 0\}.
\end{align*}

\begin{definition} 
The system $\Sigma(A,B)$  is called {\em input-to-state stable with respect to  $Z$} (or {\em  $Z$-ISS}), if there exist functions $\beta\in \mathcal{KL}$ and $ \mu\in {\calK}_\infty$ such that for every $t\ge 0$, $x_0\in X$ and  $u\in Z(0,t;U)$
\begin{enumerate}[label=(\roman*)]
\item  $x(t)$ lies in $X$  and
\item $
\left\| x(t)\right\| \le \beta(\|x_0\|,t)+  \mu (\|u\|_{Z(0,t;U)}).$
\end{enumerate}

 The system $\Sigma(A,B)$  is called  {\em integral input-to-state stable with respect to  $Z$} (or {\em $Z$-iISS}), if   there exist  functions $\beta\in \mathcal{KL}$, $\theta\in {\calK}_\infty$ and $\mu \in {\calK}$ such that for every $t\ge 0$, $x_0\in X$ and  $u\in Z(0,t;U)$
\begin{enumerate}[label=(\roman*)]
\item $x(t)$ lies in $X$ and 
\item $\displaystyle
\left\| x(t)\right\| \le \beta(\|x_0\|,t)+ \theta \left(\int_0^t \mu (\|u(s)\|_{U}) \, ds\right).$
\end{enumerate}

 The system $\Sigma(A,B)$  is called  {\em uniformly bounded energy bounded state  with respect to  $Z$} (or {\em $Z$-UBEBS}), if  there exist  functions  $\gamma, \theta\in {\calK}_\infty$, $\mu \in {\calK}$ and a constant $c >0$ such that for every $t\ge 0$, $x_0\in X$ and  $u\in Z(0,t;U)$
\begin{enumerate}[label=(\roman*)]
\item $x(t)$ lies in $X$  and 
\item $\displaystyle
\left\| x(t)\right\| \le \gamma(\|x_0\|)+ \theta \left(\int_0^t \mu (\|u(s)\|_{U}) \, ds\right) +c.$
\end{enumerate}
\end{definition}

\begin{rem}\label{remISS}
\begin{enumerate}
\item
By the inclusion of $L^p$ spaces on bounded intervals we obtain that $L^p$-ISS ($L^p$-iISS, $L^ p$-UBEBS)  implies  $L^q$-ISS ($L^q$-iISS, $L^ q$-UBEBS)  for all $1 \leq p < q \leq \infty$. Further the inclusions $L^\infty \subseteq E_\Phi \subseteq L_\Phi \subseteq L^1$ and $Z\subseteq L^1_{loc}$ yield a corresponding chain of implications of  ISS, iISS and UBEBS.
\item {Note that in general the integral $\int_0^t \mu (\|u(s)\|_{U}) \, ds$ in the inequalities defining $Z$-iISS and $Z$-UBEBS may be infinite. In that case, the inequalities hold trivially. This indicates that the major interest in iISS and UBEBS lies in the case $Z=L^{\infty}$, in which the integral is always finite.}
\end{enumerate}
\end{rem}

\subsection{Relations between the stability notions}

Recall that the semigroup $(T(t))_{t\ge 0}$ is called exponentially stable, if there exist constants $M,\omega>0$ such that
\begin{equation}\label{eqnexp}
\|T(t)\|\le M{\rm{e}}^{-\omega t}, \quad t\ge 0.
\end{equation}
{
\begin{lemma}\label{lem2}
 Let $(T(t))_{t\ge 0}$ be exponentially stable and $\Sigma(A,B)$ be $Z$-admissible. Then the following holds.
 \begin{enumerate}[label=(\roman*)]
 \item\label{lem2:it1} $\Sigma(A,B)$ is infinite-time $Z$-admissible.
 
 \item\label{lem2:it2}
  $\Sigma(A,B)$ is $Z$-iISS if and only if there exist $\theta \in \mathcal K_\infty$ and $\mu \in \mathcal K$ such that for every $u\in Z(0,1;U)$,
\begin{equation}\label{eq:lemma14}
\left\|\int_0^1 T_{-1}(s) B u(s) \,ds \right\| \le \theta \left(\int_0^1 \mu (\|u(s)\|_{U}) \, ds\right).
\end{equation}
Moreover, if (\ref{eq:lemma14}) holds, then  $\Sigma(A,B)$ is $Z$-iISS with the same choice of $\mu$.
\end{enumerate}
\end{lemma}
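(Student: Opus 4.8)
The plan is to prove both items by the same device: decompose the time interval into subintervals of length one, exploit the semigroup relation $T_{-1}(r+k)=T(k)T_{-1}(r)$ together with the exponential decay $\|T(k)\|\le M\mathrm{e}^{-\omega k}$, and control each unit piece using the translation and restriction axioms (c)--(e) of $Z$. Throughout I use that, by $Z$-admissibility, every unit-interval integral $\int_0^1 T_{-1}(r)Bw(r)\,dr$ with $w\in Z(0,1;U)$ lies in $X$, so that $T_{-1}(k)$ may be replaced by $T(k)$ when it acts on such an integral.

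For \ref{lem2:it1} I would first treat $t=n\in\mathbb{N}$. Writing $\int_0^n T_{-1}(s)Bu(s)\,ds=\sum_{k=0}^{n-1}\int_k^{k+1}T_{-1}(s)Bu(s)\,ds$ and substituting $s=r+k$, each summand equals $T(k)\int_0^1 T_{-1}(r)Bw_k(r)\,dr$, where $w_k:=(S_k u)|_{(0,1)}$. Admissibility gives $\bigl\|\int_0^1 T_{-1}(r)Bw_k(r)\,dr\bigr\|\le c(1)\|w_k\|_{Z(0,1;U)}$, and axioms (c)--(e) together with $\|S_k\|\le1$ yield $\|w_k\|_{Z(0,1;U)}\le\|u\|_{Z(0,n;U)}$. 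Summing the geometric series then gives $c(n)\le \frac{M c(1)}{1-\mathrm{e}^{-\omega}}$, and the monotonicity $c(t)\le c(\lceil t\rceil)$ (via axiom (c) and $u|_{[t,\infty)}=0$) extends this uniform bound to all $t>0$, proving $c_\infty<\infty$.

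For \ref{lem2:it2} I first record that the reflection $R_t$ (axiom (d)) isometrically identifies the mild-solution kernel with the one in \eqref{eq:lemma14}: substituting $r=t-s$ gives $\int_0^t T_{-1}(t-s)Bu(s)\,ds=\int_0^t T_{-1}(r)B(R_tu)(r)\,dr$, while $\int_0^t\mu(\|(R_tu)(r)\|_U)\,dr=\int_0^t\mu(\|u(s)\|_U)\,ds$ and $\|R_tu\|_{Z(0,t;U)}=\|u\|_{Z(0,t;U)}$. With this identification both directions reduce to statements about $\int_0^t T_{-1}(r)Bv(r)\,dr$. The forward direction is then immediate: taking $x_0=0$, $t=1$ in the $Z$-iISS estimate and using $\beta(0,1)=0$ yields \eqref{eq:lemma14} with the same $\theta,\mu$. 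For the converse I set $v=R_tu$ and decompose as in \ref{lem2:it1}, so that each unit piece is $T(k)\int_0^1 T_{-1}(r)Bw_k(r)\,dr$ with $w_k=(S_kv)|_{(0,1)}$; the hypothesis \eqref{eq:lemma14} bounds the inner norm by $\theta(a_k)$, where $a_k:=\int_k^{k+1}\mu(\|v(r)\|_U)\,dr$ and $\sum_k a_k=\int_0^t\mu(\|v\|_U)\,dr$. Since $\theta$ is increasing, $\theta(a_k)\le\theta\bigl(\sum_j a_j\bigr)$, so $\sum_k M\mathrm{e}^{-\omega k}\theta(a_k)\le\frac{M}{1-\mathrm{e}^{-\omega}}\theta\bigl(\int_0^t\mu(\|u(s)\|_U)\,ds\bigr)$. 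Adding the homogeneous term $\|T(t)x_0\|\le M\mathrm{e}^{-\omega t}\|x_0\|=:\beta(\|x_0\|,t)\in\mathcal{KL}$ produces the $Z$-iISS estimate with $\theta$ replaced by $\tilde\theta:=\frac{M}{1-\mathrm{e}^{-\omega}}\theta\in\mathcal{K}_\infty$ and the same $\mu$.

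The routine but delicate part is the bookkeeping with the $Z$-axioms: verifying that each translated-and-restricted piece $w_k$ genuinely lies in $Z(0,1;U)$ with the stated norm and integral bounds, and handling the final short interval $(\lfloor t\rfloor,t)$ of length $<1$ (to which \eqref{eq:lemma14} still applies, since a function supported in $(0,\ell)$ with $\ell\le1$ belongs to $Z(0,1;U)$). The one genuinely quantitative step---controlling $\sum_k M\mathrm{e}^{-\omega k}\theta(a_k)$ by a single $\mathcal{K}_\infty$-function of $\sum_k a_k$---is rendered harmless here by the crude bound $\theta(a_k)\le\theta\bigl(\sum_j a_j\bigr)$, which the geometric weights then make summable. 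I therefore expect no essential obstacle beyond keeping the reflection and translation identifications straight.
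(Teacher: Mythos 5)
Your argument is correct and follows essentially the same route as the paper's proof: both decompose $[0,t]$ into unit intervals, write each piece as $T(k)\int_0^1 T_{-1}(r)B\,(S_ku)|_{(0,1)}(r)\,dr$, control the shifted restrictions via axioms (c)--(e), and exploit the summability of $\|T(k)\|$ coming from exponential stability. The only cosmetic differences are that the paper bounds the sum by $\bigl(\sum_k\|T(k)\|\bigr)$ times the \emph{maximal} unit piece and reduces everything to a single $\tilde u\in Z(0,1;U)$, whereas you bound each $\theta(a_k)$ by $\theta\bigl(\sum_j a_j\bigr)$ termwise, and that you make explicit the reflection $R_t$ identifying the convolution kernel with the admissibility kernel (and the treatment of non-integer $t$ via a short final interval rather than zero-extension), which the paper leaves implicit.
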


\begin{proof} Clearly, in \ref{lem2:it2} we only have to show that the condition for $Z$-iISS is sufficient.
Therefore, in both \ref{lem2:it1} and  \ref{lem2:it2} it suffices to show that there exists $C>0$ such that for any $t>0$ and $u\in Z(0,t;U)$, there exists $\tilde{u}\in Z(0,1;U)$ with
  \[\left\|\int_0^t T_{-1}(s) B u(s) \,ds \right\| \le C \left\|\int_0^1 T_{-1}(s) B \tilde{u}(s) \,ds \right\|\]
such that $\|\tilde{u}\|_{Z(0,1;U)}\leq \|u\|_{Z(0,t;U)}$  and $\int_0^1 \mu (\|\tilde{u}(s)\|_{U}) \, ds \leq \int_0^t \mu (\|u(s)\|_{U}) \, ds$ for any $\mu\in\mathcal{K}$. Without loss of generality, we assume that $t\in\N$, otherwise extend $u$ suitably by the zero-function. 
By splitting the integral, substitution and the fact that $\Sigma(A,B)$ is $Z$-admissible, we get for $u\in Z(0,t;U)$,
\begin{align*}
 \left \Vert \int_0^t T_{-1} (s) B u(s) \, ds \right \Vert ={}&\left \Vert \sum_{k=0}^{t-1} \int_k^{k+1}T_{-1}(s) B u(s) \, ds \right \Vert\\
= {}&\left \Vert \sum_{k=0}^{t-1} T(k) \int_0^1 T_{-1}(s)B u(s+k) \, ds \right \Vert \\ \leq {}&  \sum_{k=0}^{t-1} \left \Vert T(k)  \right \Vert \max_{k=0,..,t-1} \left \Vert  \int_0^1 T_{-1}(s)B u(s+k) \, ds  \right \Vert\\ \leq {}&C \cdot \max_{k=0,..,t-1} \left \Vert  \int_0^1 T_{-1}(s)B u(s+k) \, ds  \right \Vert ,
 \end{align*}
where $C<\infty$ only depends on the exponentially stable semigroup $(T(t))_{t\geq 0}$. 
Choose $\tilde{u}=u(\cdot+k_{0})|_{(0,1)}$, where $k_{0}$ is the argument such that the above maximum is attained. Clearly, $\int_0^1 \mu (\|\tilde{u}(s)\|_{U}) \, ds \leq \int_0^t \mu (\|u(s)\|_{U}) \, ds$. We now use the assumptions on $Z$ described in the introduction. By c), $u(\cdot+k_{0})\in Z$ and $\|u(\cdot+k_{0})\|_{Z(0,t;U)}\leq \|u\|_{Z(0,t;U)}$. Therefore, Property d) implies that $\tilde{u}\in Z(0,1;U)$ with $\|\tilde{u}\|_{Z(0,1;U)}\leq \|u(\cdot+k_{0})\|_{Z(0,t;U)}\leq \|u\|_{Z(0,t;U)}$.
\end{proof}
Note that \ref{lem2:it1} in Lemma \ref{lem2} for the case $Z=L^p$ is well-known and can e.g.\ be found in \cite{TucWei09}
for $p=2$. 
}
\begin{proposition}\label{proprel1}
Let $Z\subseteq L_{loc}^{1}(0,\infty;U)$ be a function space. Then we have:
\begin{enumerate}[label=(\roman*)]
\item\label{proprel1:it1} The following statements are equivalent
\begin{enumerate}
\item $\Sigma(A,B)$ is $Z$-ISS,
\item  $\Sigma(A,B)$ is $Z$-admissible and $(T(t))_{t\ge 0}$  is exponentially stable, 
\item   $\Sigma(A,B)$ is $Z$-infinite-time admissible and $(T(t))_{t\ge 0}$  is exponentially stable.
\end{enumerate}
\item \label{proprel1:it2} If $\Sigma(A,B)$ is  $Z$-iISS,  then the system is $Z$-admissible and $(T(t))_{t\ge 0}$  is exponentially stable, 
\item \label{proprel1:it3}If $\Sigma(A,B)$ is $Z$-UBEBS, then  the system is $Z$-admissible and $(T(t))_{t\ge 0}$  is bounded, that is, \eqref{eqnexp} holds for $\omega=0$.
\end{enumerate}
\end{proposition}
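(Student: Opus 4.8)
The plan is to extract two observations common to the three sets of defining inequalities and then assemble the statements. \textbf{First}, in each of the definitions of $Z$-ISS, $Z$-iISS and $Z$-UBEBS, condition~(i) already requires $x(t)\in X$ for every $t>0$, $x_0\in X$ and $u\in Z(0,t;U)$. Specializing to $x_0=0$ gives $\int_0^t T_{-1}(t-s)Bu(s)\,ds\in X$, and after the substitution $s\mapsto t-s$, that is, replacing $u$ by its reflection $R_tu$ (which remains in $Z(0,t;U)$ with the same norm by property~(d)), this becomes exactly the admissibility requirement $\int_0^t T_{-1}(s)Bv(s)\,ds\in X$ for all $v\in Z(0,t;U)$. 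Hence each of $Z$-ISS, $Z$-iISS and $Z$-UBEBS implies $Z$-admissibility, which disposes of the admissibility assertion in all three items.

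\textbf{Second}, I would test condition~(ii) on the zero input $u\equiv0$. Since $\mu(0)=0$ and $\theta(0)=0$, the input term vanishes, leaving $\|T(t)x_0\|\le\beta(\|x_0\|,t)$ for $Z$-ISS and $Z$-iISS, and $\|T(t)x_0\|\le\gamma(\|x_0\|)+c$ for $Z$-UBEBS. Taking the supremum over $\|x_0\|\le1$ and using monotonicity of $\beta(\cdot,t)$ and of $\gamma$ gives $\|T(t)\|\le\beta(1,t)$ in the first two cases and $\|T(t)\|\le\gamma(1)+c$ in the last. The latter bound is uniform in $t$, which already proves item~\ref{proprel1:it3}: $(T(t))_{t\ge0}$ is bounded.

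For the \emph{exponential} stability in \ref{proprel1:it2} and in the implication (a)$\Rightarrow$(b) of \ref{proprel1:it1}, the bound $\|T(t)\|\le\beta(1,t)$ together with $\beta(1,\cdot)\in\mathcal{L}$ forces $\|T(t)\|\to0$ as $t\to\infty$. I would then invoke the standard semigroup fact that $\|T(t_0)\|<1$ for some $t_0>0$ implies exponential stability --- via submultiplicativity $\|T(nt_0+r)\|\le\|T(t_0)\|^{\,n}\sup_{0\le r\le t_0}\|T(r)\|$ and uniform boundedness of the semigroup on compact intervals --- and choose $t_0$ with $\beta(1,t_0)<1$. This settles \ref{proprel1:it2}, and, together with the admissibility step, the implication (a)$\Rightarrow$(b) of \ref{proprel1:it1}.

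It remains to close the cycle in \ref{proprel1:it1}. Here (c)$\Rightarrow$(b) is immediate from Remark~\ref{remadm}, while (b)$\Rightarrow$(c) is exactly Lemma~\ref{lem2}\ref{lem2:it1}. For (c)$\Rightarrow$(a) I would combine the exponential bound $\|T(t)\|\le Me^{-\omega t}$ with the infinite-time admissibility constant $c_\infty$: in \eqref{eqn2} the first summand is bounded by $Me^{-\omega t}\|x_0\|$, and the convolution term, via the same reflection as above and estimate \eqref{eq:adm2}, by $c_\infty\|u\|_{Z(0,t;U)}$. Thus $\|x(t)\|\le Me^{-\omega t}\|x_0\|+c_\infty\|u\|_{Z(0,t;U)}$, and the choices $\beta(r,t)=Me^{-\omega t}r\in\mathcal{KL}$ and $\mu(r)=\max\{c_\infty,1\}\,r\in\mathcal{K}_{\infty}$ give $Z$-ISS. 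The only step that is not pure bookkeeping is the passage from the uniform decay $\|T(t)\|\to0$ to exponential decay, so I expect that semigroup argument to be the conceptual crux; everything else follows from the axioms (c)--(e) on $Z$ and the elementary properties of the comparison functions.
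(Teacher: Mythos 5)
Your proof is correct and follows essentially the same route as the paper's: admissibility is read off from condition (i) of the definitions (with $x_0=0$), exponential stability, respectively boundedness, of the semigroup is obtained by testing the estimates on $u\equiv 0$, and the cycle in \ref{proprel1:it1} is closed via Remark \ref{remadm}, Lemma \ref{lem2}\ref{lem2:it1} and the bound $\|x(t)\|\le M\mathrm{e}^{-\omega t}\|x_0\|+c_\infty\|u\|_{Z(0,t;U)}$. You merely make explicit some details the paper compresses into ``clearly'', such as the reflection $R_t$ relating the mild solution to the admissibility integral and the standard passage from $\|T(t_0)\|<1$ to exponential decay.
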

\begin{proof} Clearly, $Z$-ISS, $Z$-iISS and $Z$-UBEBS imply $Z$-admissibility. Further, $Z$-ad\-missi\-bility and exponential stability of $(T(t))_{t\ge 0}$ show $Z$-ISS, see Remark \ref{remadm}. If, $\Sigma(A,B)$ is $Z$-ISS or $Z$-iISS, by setting $u=0$, it follows that $\|T(t)\|<1$ for sufficiently large $t$, which shows that $(T(t))_{t\ge 0}$  is exponentially stable. It is easy to see that $Z$-UBEBS implies boundedness of $(T(t))_{t\ge 0}$. Finally, by
Remark \ref{remadm}  items (b) and (c) in (i) are equivalent.
\end{proof}
\begin{proposition}\label{proprel2}
If $1\le p <\infty$, then the following are equivalent
\begin{enumerate}[label=(\roman*)]
\item\label{proprel2i} $\Sigma(A,B)$ is $L^p$-ISS,
\item\label{proprel2ii}  $\Sigma(A,B)$ is $L^p$-iISS,
\item\label{proprel2iii} $\Sigma(A,B)$ is $L^p$-UBEBS and $(T(t))_{t\ge 0}$  is exponentially stable.
\end{enumerate}
\end{proposition}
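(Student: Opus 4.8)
The plan is to prove the cycle of implications (i) $\Rightarrow$ (ii) $\Rightarrow$ (iii) $\Rightarrow$ (i). Of these, only the first carries genuine content; the remaining two reduce almost immediately to Proposition \ref{proprel1}. Throughout I would work with $Z=L^p$, so that $Z(0,1;U)=L^p(0,1;U)$, and exploit that Lemma \ref{lem2} compresses the whole family of iISS estimates into a single inequality on $(0,1)$.

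For the main implication (i) $\Rightarrow$ (ii), I would first apply Proposition \ref{proprel1}(i) to deduce from $L^p$-ISS that $(T(t))_{t\ge 0}$ is exponentially stable and that $\Sigma(A,B)$ is $L^p$-admissible. This places us exactly in the hypotheses of Lemma \ref{lem2}, so it is enough to produce $\theta\in\mathcal{K}_\infty$ and $\mu\in\mathcal{K}$ verifying \eqref{eq:lemma14} on $(0,1)$. Admissibility furnishes a constant $c(1)$ with $\bigl\|\int_0^1 T_{-1}(s)Bu(s)\,ds\bigr\|\le c(1)\|u\|_{L^p(0,1;U)}$, so the task is to recast this $L^p$-norm as the right-hand side of \eqref{eq:lemma14}. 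The decisive observation is that with $\mu(r)=r^p$, which belongs to $\mathcal{K}$ because $p\ge 1$, one has $\int_0^1\mu(\|u(s)\|_U)\,ds=\|u\|_{L^p(0,1;U)}^p$; taking then $\theta(r)=(c(1)+1)r^{1/p}\in\mathcal{K}_\infty$ gives $\theta\bigl(\int_0^1\mu(\|u(s)\|_U)\,ds\bigr)=(c(1)+1)\|u\|_{L^p(0,1;U)}$, which dominates $c(1)\|u\|_{L^p(0,1;U)}$ and hence the integral in question. Lemma \ref{lem2}(ii) then delivers $L^p$-iISS.

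The two remaining implications are routine. For (ii) $\Rightarrow$ (iii), Proposition \ref{proprel1}(ii) already provides exponential stability, and the iISS estimate becomes a UBEBS estimate by bounding $\beta(\|x_0\|,t)\le\beta(\|x_0\|,0)$ (as $\beta(s,\cdot)$ is decreasing) and dominating $s\mapsto\beta(s,0)$, which lies in $\mathcal{K}$, by the $\mathcal{K}_\infty$-function $\gamma(s)=\beta(s,0)+s$, while keeping $\theta,\mu$ unchanged and choosing any $c>0$. For (iii) $\Rightarrow$ (i), Proposition \ref{proprel1}(iii) shows that $L^p$-UBEBS implies $L^p$-admissibility, which together with the exponential stability assumed in (iii) yields $L^p$-ISS by Proposition \ref{proprel1}(i).

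The one delicate point is thus the main implication, and its crux is entirely structural: for finite $p$ the norm $\|u\|_{L^p(0,1;U)}=\bigl(\int_0^1\|u(s)\|_U^p\,ds\bigr)^{1/p}$ is itself a power of an integral of a pointwise function of $u$, which is precisely the shape of the right-hand side of \eqref{eq:lemma14}. This coincidence is what forces ISS and iISS to coincide for $p<\infty$, and it breaks down for $p=\infty$, where the essential supremum admits no such representation -- which is why the case $Z=L^\infty$ is excluded here and is treated by different means elsewhere in the paper.
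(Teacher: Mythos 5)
Your proof is correct and rests on exactly the same observation as the paper's: for finite $p$ the bound $c\,\|u\|_{L^p}$ is literally $\theta\bigl(\int_0^t\mu(\|u(s)\|_U)\,ds\bigr)$ with $\mu(r)=r^p$ and $\theta(r)=Cr^{1/p}$. The only cosmetic difference is that the paper writes the global iISS estimate directly from infinite-time admissibility (using $c_\infty$ from Proposition \ref{proprel1}) instead of reducing to the interval $(0,1)$ via Lemma \ref{lem2}; both routes are valid.
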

\begin{proof}
Clearly, by the definition of iISS and UBEBS, \ref{proprel2ii}$\Rightarrow$ \ref{proprel2iii}. By Proposition \ref{proprel1}, \ref{proprel2iii}$ \Rightarrow $\ref{proprel2i}. Thus in view of Proposition \ref{proprel1} it remains to show that $L^p$-infinite-time admissibility and  exponential stability imply $L^p$-iISS. Indeed, $L^p$-infinite-time admissibility and  exponential stability show for $x_0\in X$ and $u\in L^p(0,t;U)$ that
\begin{align*}
\Vert x(t) &\Vert \leq M {\rm{e}}^{-\omega t}\|x_0\|+ c_\infty \left \Vert u \right \Vert_{L^p(0,t; U)}\\ &= M {\rm{e}}^{-\omega t}\|x_0\|+c_\infty \left(\int_0^t \|u(s)\|_U^p \, ds \right)^{1/p},
\end{align*}
which shows $L^p$-iISS.
\end{proof}

\begin{rem}
Let $1\le p<\infty$. If  the system $\Sigma(A,B)$ is  $L^p$-admissible  and $(T(t))_{t\ge 0}$ is exponentially stable, then 
the system $\Sigma(A,B)$ is $L^p$-ISS with the following choices for the functions $\beta$ and $\mu$:
\[ \beta(s,t):= M{\rm{e}}^{-\omega t}s \quad \mbox{and} \quad \mu(s):= c_\infty s.\]
Here the constants $M$ and $\omega$ are given by \eqref{eqnexp} and $c_\infty =\sup_{t\ge 0} c(t)$.
\end{rem}

\begin{proposition}\label{prop:zeroclass}
If $\Sigma(A,B)$ is $L^\infty$-iISS, then $\Sigma(A,B)$ is $L^\infty$-zero-class admissible.
\end{proposition}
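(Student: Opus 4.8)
The plan is to turn the iISS estimate, specialized to $x_0=0$, into an explicit upper bound for the admissibility constant $c(t)$ of \eqref{eq:adm2}, and then let $t\searrow0$. Since $L^\infty$-iISS implies $L^\infty$-admissibility by Proposition \ref{proprel1}\ref{proprel1:it2}, the constant $c(t)$ is finite for every $t>0$ and \eqref{eq:adm2} holds; hence only $\lim_{t\to0}c(t)=0$ remains to be shown.

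First I would put $x_0=0$ in the defining inequality of $L^\infty$-iISS. As $\beta(\cdot,t)\in\calK$ we have $\beta(0,t)=0$, so for every $t>0$ and every $v\in L^\infty(0,t;U)$ the mild solution satisfies
\[
\left\|\int_0^t T_{-1}(t-s)Bv(s)\,ds\right\|\le \theta\!\left(\int_0^t\mu(\|v(s)\|_U)\,ds\right).
\]
To read this off for the operator $u\mapsto\int_0^t T_{-1}(s)Bu(s)\,ds$ that defines $c(t)$, I would apply the estimate to the reflected input $v=R_tu$. By property (d) we have $R_tu\in L^\infty(0,t;U)$, the substitution $\sigma=t-s$ turns $\int_0^t T_{-1}(t-s)B(R_tu)(s)\,ds$ into $\int_0^t T_{-1}(\sigma)Bu(\sigma)\,d\sigma$, and the same substitution leaves both $\int_0^t\mu(\|\cdot\|_U)$ and the $L^\infty$-norm unchanged. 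This yields, for all $u\in L^\infty(0,t;U)$,
\[
\left\|\int_0^t T_{-1}(s)Bu(s)\,ds\right\|\le \theta\!\left(\int_0^t\mu(\|u(s)\|_U)\,ds\right).
\]

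Next, since $\mu$ is increasing, $\mu(\|u(s)\|_U)\le\mu(\|u\|_{L^\infty(0,t;U)})$ almost everywhere, so the integral is at most $t\,\mu(\|u\|_{L^\infty(0,t;U)})$; monotonicity of $\theta$ then gives the bound $\theta\bigl(t\,\mu(\|u\|_{L^\infty(0,t;U)})\bigr)$. Taking the supremum over the unit ball $\{\|u\|_{L^\infty(0,t;U)}\le1\}$ and again using that $\theta$ and $\mu$ are increasing, I obtain $c(t)\le\theta\bigl(t\,\mu(1)\bigr)$. Finally, as $t\searrow0$ the argument $t\,\mu(1)$ tends to $0$, and continuity of $\theta$ together with $\theta(0)=0$ forces $c(t)\to0$, which is exactly $L^\infty$-zero-class admissibility.

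I expect the only genuinely delicate point to be the passage between the solution operator $u\mapsto\int_0^t T_{-1}(t-s)Bu(s)\,ds$ (which the iISS estimate controls) and the operator $u\mapsto\int_0^t T_{-1}(s)Bu(s)\,ds$ (which defines $c(t)$) via the reflection $R_t$; everything else is elementary monotonicity and continuity. Notably, no homogeneity or linearity argument beyond the definition of the operator norm is needed, since the crude pointwise bound $\theta(t\,\mu(1))$ already vanishes as $t\searrow0$.
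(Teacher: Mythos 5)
Your proof is correct and follows essentially the same route as the paper: both arguments reduce to the bound $c(t)\le\theta\bigl(t\,\mu(1)\bigr)$, obtained from the iISS estimate with $x_0=0$ by monotonicity of $\mu$ and $\theta$ (the paper normalizes $u$ by $\|u\|_\infty$ inside the inequality, which is the same as your restriction to the unit ball). Your explicit handling of the reflection $R_t$ relating $\int_0^t T_{-1}(t-s)Bu(s)\,ds$ to $\int_0^t T_{-1}(s)Bu(s)\,ds$ is a careful bookkeeping step that the paper passes over silently.
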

\begin{proof}
If $\Sigma(A,B)$ is $L^\infty$-iISS, then there exist $\theta \in \mathcal K_\infty$ and $\mu \in \mathcal K$ such that for all $t >0$, $u \in L^\infty(0,t;U)$, $u \neq 0$
 \begin{equation}
   \label{eq:1}
  \frac{1}{\|u\|_{\infty}}\left\|\int_0^t T_{-1}(s) B u(s)\,  ds \right \| \leq \theta \left(\int_0^t \mu \left(\tfrac{\|u(s)\|_U}{\|u\|_\infty}\right) \, ds\right).
 \end{equation}
Since the function $\mu$ is monotonically increasing and $\|u(s)\|_U \leq \|u\|_\infty$ a.e., the right-hand side of \eqref{eq:1} is bounded above by $\theta(t\mu(1))$ which converges to zero as $t \searrow 0$.
\end{proof}

 We illustrate the relations of the different stability notions with respect to  $L^\infty$ discussed above in the diagram depicted in Figure \ref{fig1}.
 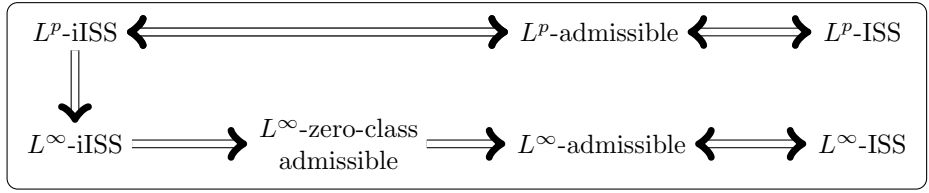
\begin{figure}\center
 \begin{tikzpicture}[->,
 shorten >=1pt,auto,node distance=3.5cm, double distance=2.5pt,framed,rounded corners]

   \node (A)                    {$L^{p}$-iISS};
   \node (B) [right=5.1cm of A] {$L^{p}$-admissible};
   \node (C) [right of =B] {$L^{p}$-ISS};
    \node (D) [below=1cm of A] {$L^{\infty}$-iISS};
    \node (E) [align=center,right of=D] {$L^{\infty}$-zero-class\\ admissible};
     \node (G) [right of=E] {$L^{\infty}$-admissible};
    \node (F) [below=1cm of C] {$L^{\infty}$-ISS};
       \path (B) edge [<->,double]           (C);
     \path (A) edge [<->,double]    (B);
	 \path (D) edge [double]           (E);
	  \path (G) edge [<->,double]           (F);
     \path (A) edge [<->,double]    (B);
       \path (E) edge [double]         (G);
       
       	  \path (A) edge [double]           (D);
 \end{tikzpicture}
 \caption{Relations between the different stability notions  with respect to  $L^p$, $p<\infty$, and $L^{\infty}$ for a system $\Sigma(A,B)$, where it is assumed that the semigroup is exponentially stable.\newline
 }
 \label{fig1}
 \end{figure}

\begin{proposition} \label{prop1b}  Suppose that $B$ is a bounded operator from $U$ to $X$ and $Z\subseteq L_{loc}^{1}(0,\infty;U)$ is a function space as in Section \ref{sec:2.1}. Then the following statements are equivalent.
\begin{enumerate}[label=(\roman*)]
\item\label{prop:it1} $(T(t))_{t\ge 0}$  is exponentially stable,
\item\label{prop:it2}  $\Sigma(A,B)$ is $Z$-admissible  and $(T(t))_{t\ge 0}$  is exponentially stable,
\item\label{prop:it3}  $\Sigma(A,B)$ is $Z$-infinite-time admissible and $(T(t))_{t\ge 0}$  is exponentially stable,
\item\label{prop:it4}  $\Sigma(A,B)$ is $Z$-ISS,

\item\label{prop:it5}  $\Sigma(A,B)$ is $Z$-iISS,
\item \label{prop:it6} $\Sigma(A,B)$ is $Z$-UBEBS and $(T(t))_{t\ge 0}$  is exponentially stable,
\item\label{prop:it7}  $\Sigma(A,B)$ is $L^{1}_{loc}$-admissible   and $(T(t))_{t\ge 0}$  is exponentially stable.
\end{enumerate}
If $Z$ satisfies Assumption (B), then the above assertions are equivalent to
\begin{enumerate}
\item[{\it (viii)}]  $\Sigma(A,B)$ is $Z$-zero-class admissible  and $(T(t))_{t\ge 0}$  is exponentially stable.
\end{enumerate}
\end{proposition}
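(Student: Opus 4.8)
The plan is to exploit the single structural fact that distinguishes this proposition from the general theory of Section \ref{sec:2}: when $B\colon U\to X$ is \emph{bounded}, the admissibility integral is a genuine Bochner integral in $X$, so that $Z$-admissibility (indeed $L^1_{loc}$-admissibility) holds \emph{automatically}, with no stability assumption whatsoever. Concretely, for $u\in Z(0,t;U)\subseteq L^1(0,t;U)$ (property (b)) we have $T_{-1}(s)Bu(s)=T(s)Bu(s)\in X$ and $\|T(s)Bu(s)\|\le\|T(s)\|\,\|B\|\,\|u(s)\|_U$, which is integrable on $[0,t]$ since $(T(s))_{s\ge0}$ is bounded on compacts; hence \eqref{eq:Admissibility} holds. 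With this in hand, the proposition reduces to feeding exponential stability into the already-established Proposition \ref{proprel1} and Lemma \ref{lem2}. I would organise the argument as ``(i) implies all of (ii)--(viii), and each of (ii)--(viii) implies (i)''.

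The converse (easy) direction is immediate: exponential stability \eqref{eqnexp} is literally part of statements (ii), (iii), (vi), (vii) and (viii), while for (iv) and (v) it follows from Proposition \ref{proprel1}\ref{proprel1:it1} and \ref{proprel1:it2}. So the content is that (i) implies the rest. Given \eqref{eqnexp} and the automatic $Z$-admissibility above, Lemma \ref{lem2}\ref{lem2:it1} upgrades this to $Z$-infinite-time admissibility, which is (iii); then (ii) follows by Remark \ref{remadm} and (iv) by Proposition \ref{proprel1}\ref{proprel1:it1}. For (v) I would verify the one-interval criterion \eqref{eq:lemma14} of Lemma \ref{lem2}\ref{lem2:it2}: using $\|T(s)\|\le M\mathrm{e}^{-\omega s}$,
\[
\left\|\int_0^1 T_{-1}(s)Bu(s)\,ds\right\|
\le M\|B\|\int_0^1\|u(s)\|_U\,ds
\le \theta\!\left(\int_0^1\mu(\|u(s)\|_U)\,ds\right),
\]
with $\mu(r)=r\in\mathcal K$ and $\theta(r)=(M\|B\|+1)r\in\mathcal K_\infty$, so that Lemma \ref{lem2}\ref{lem2:it2} yields $Z$-iISS, i.e.\ (v).

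It then remains to harvest (vi), (vii) and (viii). Statement (vi) follows because $Z$-iISS trivially implies $Z$-UBEBS (bound $\beta(\|x_0\|,t)\le\beta(\|x_0\|,0)$, enlarge the latter to a $\mathcal K_\infty$-function and take any $c>0$), together with \eqref{eqnexp}; statement (vii) is just the automatic $L^1_{loc}$-admissibility combined with \eqref{eqnexp}. For (viii), under condition \eqref{cond:B}, I would estimate the admissibility constant directly: using $\|T(s)\|\le M$ and property (b),
\[
\left\|\int_0^t T_{-1}(s)Bu(s)\,ds\right\|
\le M\|B\|\int_0^t\|u(s)\|_U\,ds
\le M\|B\|\,\kappa(t)\,\|u\|_{Z(0,t;U)},
\]
so $c(t)\le M\|B\|\,\kappa(t)$, and \eqref{cond:B} forces $c(t)\to0$ as $t\searrow0$; thus $\Sigma(A,B)$ is $Z$-zero-class admissible, which with \eqref{eqnexp} is (viii).

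I expect no deep obstacle: the whole proposition collapses onto the elementary Bochner estimate above, the only genuinely new observation beyond Section \ref{sec:2} being that bounded $B$ renders admissibility automatic. The two places that require a little care are that the \emph{uniformity in $t$} underlying infinite-time admissibility uses $\omega>0$ (mere boundedness of the semigroup would not suffice), and that the zero-class conclusion (viii) genuinely needs condition \eqref{cond:B} rather than bare embedding into $L^1_{loc}$ — which is exactly why (viii) is listed separately under Assumption (B).
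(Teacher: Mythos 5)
Your proof is correct and follows essentially the same route as the paper: the key observation in both is that boundedness of $B$ makes $Z$- (indeed $L^1_{loc}$-) admissibility automatic via the elementary Bochner estimate, after which everything is harvested from Proposition \ref{proprel1}, Lemma \ref{lem2}, and the explicit bound $c(t)\le M\|B\|\kappa(t)$ for (viii). The only cosmetic difference is that you verify the one-interval iISS criterion of Lemma \ref{lem2}\ref{lem2:it2} directly with linear $\mu$ and $\theta$, whereas the paper deduces (v) from (vii) via $L^1$-iISS (Proposition \ref{proprel2}) and the embedding $Z\subseteq L^1_{loc}$ (Remark \ref{remISS}); both reduce to the same elementary estimate.
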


\begin{proof}
By Proposition \ref{proprel1} we have $\ref{prop:it5}\Rightarrow \ref{prop:it6} \Rightarrow \ref{prop:it2} \Rightarrow \ref{prop:it3} \Rightarrow \ref{prop:it4} \Rightarrow \ref{prop:it1}$, and Proposition \ref{proprel2} and Remark \ref{remISS} \label{remISSit1} prove $  \ref{prop:it7} \Rightarrow \ref{prop:it5}$. The implication $\ref{prop:it1} \Rightarrow \ref{prop:it7}$ follows from the fact that 
by the boundedness of $B$ we have $x(t)\in X$ for all $t\geq0$ and all $u\in L^1(0,t;U)$.
Clearly, $(viii) \Rightarrow \ref{prop:it2}$. Thus it remains to show that if $Z$ satisfies Assumption (B), then $\ref{prop:it1} \Rightarrow (viii)$. 
Let $(T(t))_{t\ge 0}$ be exponentially stable, that is, there exist constants $M,\omega>0$ such that \eqref{eqnexp} holds. Therefore, for any $u\in L^{1}(0,t;U)$,
\begin{align}
\|x(t)\| &\le M{\rm{e}}^{-\omega t}\|x_0\| + M \|B\| \int_0^t {\rm{e}}^{-\omega(t- s)}\|u(s)\|_{U}\, ds\notag \\
&\le  M{\rm{e}}^{-\omega t}\|x_0\| +  M\|B\| \int_{0}^{t}\|u(s)\|_{U}\, ds\label{prop1:eq1}.
\end{align}
Using  that $Z(0,t;U)$ is continuously embedded in $L^{1}(0,t;U)$, we conclude that 
\begin{equation}\label{prop1:eq2}\|x(t)\| \leq M{\rm{e}}^{-\omega t}\|x_0\| +  M\|B\| \kappa(t) \|u\|_{Z(0,t;U)}\end{equation}
for all $t\geq0$. If Assumption (B) holds, then the embedding constants $\kappa(t)$ tend to $0$ as $t\searrow 0$. Hence, \eqref{prop1:eq2} shows that \ref{prop:it1} implies (viii).
\end{proof}
For the special case $Z=L^{p}(0,\infty;U)$, parts of the equivalences in Proposition \ref{prop1b} can already be found in \cite{MiI16}.
\begin{rem}
Note that in Proposition \ref{prop1b}, the assertions are independent of $Z$ as the assertions only rest on  exponential stability. In particular, if one of the equivalent conditions hold, then the system $\Sigma(A,B)$ is  $L^{p}$-ISS with the following choices for the functions $\beta$ and $\mu$
\[ \beta(s,t):= M{\rm{e}}^{-\omega t}s \quad \mbox{and} \quad \mu(s):= \frac{M}{\omega q} \|B\| s,\]
where $q$ is the H\"{o}lder conjugate of $p$, and $L^p$-iISS with 
\[ \beta(s,t):= M{\rm{e}}^{-\omega t}s, \quad  \quad \mu(s):= s, \quad \mbox{and} \quad \theta(s) := sM\|B\|.\]
Here the constants $M$ and $\omega$ are given by \eqref{eqnexp}. Although, in this case a system is $L^p$-ISS or $L^p$-iISS for all $p$ if this holds for some $p$, the choices for the functions $\mu$, however, do depend on $p$. Note that if $B$ is unbounded, then the question whether a system is $L^{p}$-ISS or $L^{p}$-iISS crucially depends on $p$. \\
Furthermore, note that in the trivial case $X=U=\C$ and $A=-1$, $B=1$, we have that the system $\Sigma(A,B)$ is not $L^{1}$-zero-class admissible.
\end{rem}

\section{IISS from the viewpoint of Orlicz spaces}
\label{sec:Orlicz}

In this section we relate $L^\infty$-ISS and $L^1$-ISS to ISS with respect to Orlicz spaces $E_\Phi$ corresponding to a Young function $\Phi$. The use of Orlicz spaces is motivated by the idea of understanding the integral appearing in the definition of iISS, \eqref{intro:eq2}, 
 as some type of norm. For the definition and fundamental properties of Orlicz spaces and Young functions, we refer to the Appendix.
The main results of this section are summarized in the following three theorems.

\begin{theorem}\label{thm:main}
  The following statements are equivalent.
 \begin{enumerate}[label=(\roman*)]
 \item\label{thm:main:it1}  There is a Young function $\Phi$ such that the system $\Sigma(A,B)$ is $E_\Phi$-ISS,
 \item\label{thm:main:it2} $\Sigma(A,B)$ is $L^\infty$-iISS,
 \item\label{thm:main:it3} $(T(t))_{t\ge 0}$ is exponentially stable and there is a Young function $\Phi$ such that the system $\Sigma(A,B)$ is $E_\Phi$-UBEBS.
\end{enumerate}
\end{theorem}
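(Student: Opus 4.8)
The plan is to establish the theorem as the equivalence \ref{thm:main:it1}$\Leftrightarrow$\ref{thm:main:it3} together with \ref{thm:main:it1}$\Leftrightarrow$\ref{thm:main:it2}. Throughout, exponential stability of $(T(t))_{t\ge0}$ is available: it is assumed in \ref{thm:main:it3} and follows from Proposition \ref{proprel1} in \ref{thm:main:it1} and \ref{thm:main:it2}. Hence, by Proposition \ref{proprel1}, each statement amounts to an admissibility assertion plus exponential stability, and the theorem is really a statement about the single-interval input map $\Psi u:=\int_0^1 T_{-1}(s)Bu(s)\,ds$, which is positively homogeneous of degree one in $u$. I would first dispatch \ref{thm:main:it1}$\Leftrightarrow$\ref{thm:main:it3}. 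Given exponential stability, Proposition \ref{proprel1}\ref{proprel1:it3} shows that $E_\Phi$-UBEBS implies $E_\Phi$-admissibility, and Proposition \ref{proprel1}\ref{proprel1:it1} then yields $E_\Phi$-ISS, giving \ref{thm:main:it3}$\Rightarrow$\ref{thm:main:it1}. For the converse it suffices that $E_\Phi$-ISS implies $E_\Phi$-UBEBS, which follows from the elementary Orlicz inequality
\[
\|u\|_{E_\Phi(0,t;U)}\le 1+\int_0^t\Phi(\|u(s)\|_U)\,ds,
\]
valid because the modular $\rho_\Phi(u):=\int_0^t\Phi(\|u\|_U)\,ds$ satisfies $\rho_\Phi(u)\ge\|u\|_{E_\Phi}$ whenever $\|u\|_{E_\Phi}\ge1$. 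Inserting this into the ISS estimate turns the norm term $\mu(\|u\|_{E_\Phi})$ into a bound of the form $\theta(\rho_\Phi(u))+c$, i.e.\ the UBEBS form (after enlarging the comparison functions so that the inner function lies in $\mathcal{K}$). This reduces everything to \ref{thm:main:it1}$\Leftrightarrow$\ref{thm:main:it2}.

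For \ref{thm:main:it2}$\Rightarrow$\ref{thm:main:it1} I would apply Lemma \ref{lem2}\ref{lem2:it2} to replace $L^\infty$-iISS by the single estimate $\|\Psi u\|\le\theta(\int_0^1\mu(\|u\|_U)\,ds)$ with $\theta\in\mathcal{K}_\infty$, $\mu\in\mathcal{K}$. The key construction is a Young function $\Phi$ with $\mu(s)\le\Phi(s)+C$ for some constant $C$ and all $s\ge0$; since $\mu\in\mathcal{K}$ this only constrains the growth of $\Phi$ at infinity and is obtained by integrating a sufficiently large nondecreasing density. Then for $u\in L^\infty(0,1;U)$ with $\|u\|_{E_\Phi}\le1$ one has $\rho_\Phi(u)\le1$, hence $\int_0^1\mu(\|u\|_U)\,ds\le\rho_\Phi(u)+C\le1+C$ and therefore $\|\Psi u\|\le\theta(1+C)$. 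Homogeneity of $\Psi$ upgrades this to $\|\Psi u\|\le\theta(1+C)\,\|u\|_{E_\Phi}$ for all $u\in L^\infty(0,1;U)$. As $L^\infty(0,1;U)$ is dense in $E_\Phi(0,1;U)$, the bound extends by continuity, giving $E_\Phi$-admissibility at $t=1$, hence for all $t$; with exponential stability, Proposition \ref{proprel1}\ref{proprel1:it1} yields $E_\Phi$-ISS.

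For \ref{thm:main:it1}$\Rightarrow$\ref{thm:main:it2}, Proposition \ref{proprel1} provides $\|\Psi u\|\le c\,\|u\|_{E_\Phi}$ for all $u\in E_\Phi(0,1;U)$, in particular for $u\in L^\infty(0,1;U)$. I would then choose a Young function $\mu$ that grows essentially faster than $\Phi$, i.e.\ $\Phi(\lambda s)/\mu(s)\to0$ as $s\to\infty$ for every $\lambda>0$ (such a $\mu$ exists for any given Young function). This forces
\[
\omega(m):=\sup\Big\{\|u\|_{E_\Phi(0,1;U)} : u\in L^\infty(0,1;U),\ \int_0^1\mu(\|u\|_U)\,ds\le m\Big\}\longrightarrow0\quad(m\searrow0),
\]
which I would prove by splitting $\int_0^1\Phi(\|u\|_U/\varepsilon)\,ds$ according to the size of $\|u\|_U$ and combining a Chebyshev estimate $|\{\|u\|_U\ge a\}|\le m/\mu(a)$ with the bound $\Phi(\,\cdot/\varepsilon)\le\tfrac12\mu$ for large arguments, so that for each $\varepsilon>0$ there is $m>0$ with $\int_0^1\mu(\|u\|_U)\,ds\le m\Rightarrow\|u\|_{E_\Phi}\le\varepsilon$. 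Dominating $c\,\omega$ by a single $\theta\in\mathcal{K}_\infty$ then gives $\|\Psi u\|\le\theta(\int_0^1\mu(\|u\|_U)\,ds)$, and Lemma \ref{lem2}\ref{lem2:it2} yields $L^\infty$-iISS.

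The appeals to Proposition \ref{proprel1}, the homogeneity of $\Psi$, the density of $L^\infty$ in $E_\Phi$, and the convex-majorant construction in \ref{thm:main:it2}$\Rightarrow$\ref{thm:main:it1} are routine. The genuinely delicate step is \ref{thm:main:it1}$\Rightarrow$\ref{thm:main:it2}: one must select $\mu$ growing fast enough that a small $\mu$-modular controls the $\Phi$-norm, and verify $\omega(m)\to0$ \emph{uniformly over all inputs}, not merely over single concentrated ``spikes''. This is exactly where the Luxemburg norm and the modular fail to be comparable---the norm can remain bounded away from $0$ while the modular tends to $0$---and it is precisely this phenomenon that dictates the use of $E_\Phi$ (in which $L^\infty$ is dense) rather than $L_\Phi$.
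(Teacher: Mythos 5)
Your proposal is correct, and its overall skeleton coincides with the paper's: reduce everything to the single-interval map via Lemma \ref{lem2}, use Proposition \ref{proprel1} to convert between ISS/UBEBS and admissibility, and mediate between the modular $\int\Phi(\|u\|_U)$ and the Luxemburg norm by passing to a Young function of suitably adjusted growth. The differences lie in how the two nontrivial directions are executed. For \ref{thm:main:it2}$\Rightarrow$\ref{thm:main:it1} the paper goes through Lemma \ref{lem:3.24}, which upgrades $\mu\in\mathcal K$ to a Young function via the Praly--Wang decomposition $\mu\le\mu_c\circ\mu_v$ and Jensen's inequality, and then runs a Cauchy-sequence argument in $E_\Phi$; you instead build $\Phi$ with $\mu\le\Phi+C$ directly, observe that the iISS estimate is then uniformly bounded on the $E_\Phi$-unit ball intersected with $L^\infty$, and invoke linearity plus density --- this bypasses Praly--Wang and Jensen entirely and is arguably cleaner, at the (mild) cost of not producing the pointwise-integral estimate \eqref{eq:4} that the paper reuses elsewhere (e.g.\ in Proposition \ref{prop:iisssuff}). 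For \ref{thm:main:it1}$\Rightarrow$\ref{thm:main:it2} both arguments construct a Young function increasing essentially more rapidly than $\Phi$ and take a supremum over a modular sublevel set; the paper cites Theorem \ref{thm:increasemorerapidly} (Krasnosel\cprime ski\u\i--Ruticki\u\i) for the key fact that small $\mu$-modular forces small $L_\Phi$-norm, whereas you reprove it by a Chebyshev-plus-tail splitting. Your two-region sketch needs a third region (a band $b<\|u\|_U\le a$ handled by Chebyshev, a tail $\|u\|_U>a$ handled by the rapid-growth comparison, and a low region $\|u\|_U\le b$ handled by choosing $b$ with $\Phi(b/\varepsilon)$ small), but this is a routine elaboration and the resulting uniform statement $\omega(m)\to0$ is exactly what the cited theorem provides. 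The \ref{thm:main:it1}$\Leftrightarrow$\ref{thm:main:it3} equivalence is handled identically to the paper.
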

If $\Phi$ satisfies the $\Delta_{2}$-condition (see Definition \ref{def:deltatwo}) more can be said.
\begin{theorem}\label{thm:genLp} If $\Phi$ is a Young function that satisfies the $\Delta_{2}$-condition, then the following are equivalent.
\begin{enumerate}[label=(\roman*)]
\item\label{thm:genLp1} $\Sigma(A,B)$ is $E_{\Phi}$-ISS,
\item\label{thm:genLp2}   $\Sigma(A,B)$ is $E_{\Phi}$-iISS,
\item\label{thm:genLp3}  $\Sigma(A,B)$ is $E_{\Phi}$-UBEBS and $(T(t))_{t\ge 0}$  is exponentially stable.
\end{enumerate}

\end{theorem}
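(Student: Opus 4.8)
The plan is to follow the template of Proposition \ref{proprel2}, where the analogous equivalences were proved for $L^p$, $p<\infty$. Two of the three implications are immediate and do not use the $\Delta_2$-condition. The implication (ii)$\Rightarrow$(iii) is read off from the definitions: every $E_\Phi$-iISS estimate is in particular an $E_\Phi$-UBEBS estimate (take the same $\theta,\mu$, any constant $c\ge 0$, and enlarge $\beta(\cdot,0)$ to a $\mathcal{K}_\infty$-function $\gamma$, using that $\beta(s,\cdot)$ is decreasing), while exponential stability comes from Proposition \ref{proprel1}, part \ref{proprel1:it2}. The implication (iii)$\Rightarrow$(i) follows from Proposition \ref{proprel1}, since $E_\Phi$-UBEBS yields $E_\Phi$-admissibility (part \ref{proprel1:it3}), which together with exponential stability gives $E_\Phi$-ISS (part \ref{proprel1:it1}). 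Hence the entire content is the implication (i)$\Rightarrow$(ii), and this is where the $\Delta_2$-condition enters.

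For (i)$\Rightarrow$(ii) I would first apply Proposition \ref{proprel1}, part \ref{proprel1:it1}, to replace $E_\Phi$-ISS by the two facts that $(T(t))_{t\ge 0}$ is exponentially stable and $\Sigma(A,B)$ is $E_\Phi$-(infinite-time) admissible. Exponential stability handles the term $\beta(\|x_0\|,t)=Me^{-\omega t}\|x_0\|$, so only the input part needs an iISS bound. Here I would invoke Lemma \ref{lem2}, part \ref{lem2:it2}: under exponential stability and admissibility it suffices to verify the single-interval inequality \eqref{eq:lemma14} on $(0,1)$. By admissibility its left-hand side is bounded by $c(1)\|u\|_{E_\Phi(0,1;U)}$, so everything reduces to the purely function-theoretic statement that there exist $\mu\in\mathcal{K}$ and $\theta_0\in\mathcal{K}_\infty$ with
\[
\|u\|_{E_\Phi(0,1;U)} \le \theta_0\Bigl(\int_0^1 \mu(\|u(s)\|_U)\,ds\Bigr),\qquad u\in E_\Phi(0,1;U),
\]
after which one takes $\theta=c(1)\theta_0$. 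The natural choice is $\mu=\Phi$ (or a strictly increasing $\mathcal{K}$-majorant of $\Phi$, should $\Phi$ vanish near $0$), so that the integral becomes the modular $\rho_\Phi(u)=\int_0^1\Phi(\|u(s)\|_U)\,ds$ and the whole problem is to compare the Luxemburg norm with the modular.

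The crux is therefore the comparison $\|u\|_{E_\Phi(0,1;U)}\le\theta_0(\rho_\Phi(u))$ with $\theta_0\in\mathcal{K}_\infty$, which I would treat in two regimes. A universal bound $\|u\|_{E_\Phi}\le\max(1,\rho_\Phi(u))$ holds with no extra hypothesis: if $\rho_\Phi(u)\le 1$ then $\|u\|_{E_\Phi}\le 1$ directly from the definition of the norm, while if $\rho_\Phi(u)\ge 1$, convexity of $\Phi$ with $\Phi(0)=0$ gives $\Phi(\|u(s)\|/\rho_\Phi(u))\le\rho_\Phi(u)^{-1}\Phi(\|u(s)\|)$, whence $\rho_\Phi\bigl(u/\rho_\Phi(u)\bigr)\le 1$ and so $\|u\|_{E_\Phi}\le\rho_\Phi(u)$. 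The $\Delta_2$-condition is then exactly what refines the small-modular regime: from $\Phi(2x)\le K\Phi(x)$ one derives the polynomial growth bound $\Phi(\lambda x)\le K\lambda^{p}\Phi(x)$ for $\lambda\ge 1$, with $p=\log_2 K$; inserting $\lambda=1/k$ yields $\rho_\Phi(u/k)\le Kk^{-p}\rho_\Phi(u)$, and the choice $k=(K\rho_\Phi(u))^{1/p}$ (admissible when $K\rho_\Phi(u)\le 1$) forces $\rho_\Phi(u/k)\le 1$, giving $\|u\|_{E_\Phi}\le(K\rho_\Phi(u))^{1/p}$. The function $\theta_0(r):=r+(Kr)^{1/p}$ then lies in $\mathcal{K}_\infty$ and dominates the right-hand sides in all cases, completing the argument.

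The main obstacle is precisely this small-modular comparison, and it is where the $\Delta_2$-condition is indispensable: without it one can construct inputs with $\rho_\Phi(u_n)\to 0$ while $\|u_n\|_{E_\Phi}$ stays bounded away from $0$ (by concentrating large values on sets of vanishing measure), so that no $\mathcal{K}_\infty$-function $\theta_0$ can exist and (i)$\Rightarrow$(ii) genuinely fails. A secondary, routine point is that if Definition \ref{def:deltatwo} only assumes the $\Delta_2$-condition near infinity, the polynomial estimate must be supplemented by an elementary bound on the part of $u$ taking values below the threshold; since we work on the finite interval $(0,1)$, this contribution is uniformly controlled and does not affect the conclusion.
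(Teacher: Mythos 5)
Your treatment of (ii)$\Rightarrow$(iii)$\Rightarrow$(i) coincides with the paper's (both defer to Proposition \ref{proprel1} exactly as in the $L^p$ case), and your reduction of (i)$\Rightarrow$(ii) to a single-interval estimate via Lemma \ref{lem2} is also the paper's reduction. Where you genuinely diverge is in how the single-interval estimate is obtained. The paper argues softly: it defines $\theta(\alpha)$ as the supremum of $\|\int_0^1 T_{-1}(s)Bu(s)\,ds\|$ over inputs with modular $\rho_\Phi(u)\le\alpha$, shows $\theta(\alpha_n)\to 0$ for $\alpha_n\searrow 0$ by invoking the qualitative fact that under the $\Delta_2$-condition $\Phi$-mean convergence to zero is equivalent to $E_\Phi$-norm convergence to zero, and then majorizes $\theta$ by some $\tilde\theta\in\mathcal K_\infty$ using the lemma of Clarke--Ledyaev--Stern. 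You instead prove the quantitative modular-versus-Luxemburg-norm inequality $\|u\|_{E_\Phi}\le\theta_0(\rho_\Phi(u))$ directly and compose with the admissibility constant $c(1)$. Your route is logically equivalent (the existence of such a $\theta_0$ is exactly the uniform version of the paper's right-continuity claim) but buys explicit comparison functions, whereas the paper's $\tilde\theta$ is non-constructive; conversely the paper's argument needs only the qualitative convergence theorem and no growth estimates.

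One step of yours needs to be written out rather than waved at. The paper's Definition \ref{def:deltatwo} imposes $\Phi(2u)\le k\Phi(u)$ only for $u\ge u_0$, so your iteration $\Phi(\lambda x)\le K\lambda^{p}\Phi(x)$ and the resulting formula $\theta_0(r)=r+(Kr)^{1/p}$ are valid as stated only when $u_0=0$. Saying the sub-threshold contribution is ``uniformly controlled'' on $(0,1)$ is not yet an argument: a bounded contribution would not suffice, since $\theta_0$ must tend to $0$ at $0$. What actually works is to split $u$ at the threshold, apply your computation to $u\chi_{\{\|u\|\ge u_0\}}$, and for the remainder use a Chebyshev estimate $|\{\|u\|\ge\varepsilon\}|\le\rho_\Phi(u)/\Phi(\varepsilon)$ together with the trivial bounds $\|v\|_\Phi\le \varepsilon/\Phi^{-1}(1)$ on $\{\|u\|<\varepsilon\}$ and $\|v\|_\Phi\le u_0/\Phi^{-1}(1/|A|)$ on a set $A$ of small measure, choosing e.g.\ $\varepsilon=\Phi^{-1}\bigl(\rho_\Phi(u)^{1/2}\bigr)$. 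This is routine but adds a term involving $\Phi^{-1}$ to your $\theta_0$; with that supplement the proof is complete.
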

\begin{rem}
Since $L^{p}$-spaces are examples of Orlicz spaces where the $\Delta_{2}$-condition is satisfied, Theorem \ref{thm:genLp} can be seen as a generalization of Proposition \ref{proprel2}.
\end{rem}

\begin{theorem}\label{thm:main2}
  Then the following statements are equivalent.
  \begin{enumerate}[label=(\roman*)]
  \item\label{thm2:it2}  $\Sigma(A,B)$ is $L^1$-ISS,
  \item \label{thm2:it3} $\Sigma(A,B)$ is $L^1$-iISS,
  \item \label{thm2:it4}  $\Sigma(A,B)$ is $E_\Phi$-ISS for every Young function $\Phi$.
  \end{enumerate}
\end{theorem}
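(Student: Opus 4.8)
The plan is to establish the three implications \ref{thm2:it2}$\Leftrightarrow$\ref{thm2:it3}, \ref{thm2:it2}$\Rightarrow$\ref{thm2:it4} and \ref{thm2:it4}$\Rightarrow$\ref{thm2:it2}, the last being the only one with real content. The equivalence \ref{thm2:it2}$\Leftrightarrow$\ref{thm2:it3} is nothing but Proposition \ref{proprel2} read with $p=1$, which already gives $L^1$-ISS $\iff$ $L^1$-iISS. For \ref{thm2:it2}$\Rightarrow$\ref{thm2:it4} I would use that on every bounded interval one has the continuous embedding $E_\Phi(0,t;U)\hookrightarrow L^1(0,t;U)$; hence, as recorded in Remark \ref{remISS}, $L^1$-ISS implies $E_\Phi$-ISS for each Young function $\Phi$. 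Equivalently, via Proposition \ref{proprel1}\ref{proprel1:it1}, $L^1$-ISS amounts to $L^1$-admissibility together with exponential stability, and the embedding turns $L^1$-admissibility into $E_\Phi$-admissibility (with constant $\kappa(t)c_{L^1}(t)$) while leaving exponential stability untouched.

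The substantial direction is \ref{thm2:it4}$\Rightarrow$\ref{thm2:it2}. Again by Proposition \ref{proprel1}\ref{proprel1:it1} it suffices to prove that $(T(t))_{t\ge0}$ is exponentially stable and that $\Sigma(A,B)$ is $L^1$-admissible. Exponential stability is free: $E_\Phi$-ISS for a single Young function $\Phi$ already yields it upon setting $u=0$ in the ISS estimate (as in Proposition \ref{proprel1}). It therefore remains to show $L^1$-admissibility, i.e. that $\int_0^1 T_{-1}(s)Bu(s)\,ds\in X$ for every $u\in L^1(0,1;U)$; here it suffices to treat the single time $t=1$, since admissibility need only be checked for one $t>0$.

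The bridge between $L^1$ and the Orlicz scale is the following de la Vall\'ee-Poussin-type fact, which I regard as the heart of the proof: for every nonnegative $g\in L^1(0,1)$ there is a Young function $\Phi$ with $g\in E_\Phi(0,1)$, that is, $\int_0^1\Phi(\lambda g(s))\,ds<\infty$ for \emph{all} $\lambda>0$. Granting it, the argument closes at once. Given $u\in L^1(0,1;U)$, apply the fact to $g=\|u(\cdot)\|_U\in L^1(0,1)$ to produce a Young function $\Phi$ with $u\in E_\Phi(0,1;U)$. Since $\Sigma(A,B)$ is $E_\Phi$-ISS, hence $E_\Phi$-admissible, the integral $\int_0^1 T_{-1}(s)Bu(s)\,ds$ lies in $X$. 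As $u$ was arbitrary in $L^1(0,1;U)$, the system is $L^1$-admissible, and together with exponential stability Proposition \ref{proprel1}\ref{proprel1:it1} delivers $L^1$-ISS. Note that hypothesis \ref{thm2:it4} is genuinely used in full strength: the Young function $\Phi$ depends on $u$, so one needs $E_\Phi$-ISS for every $\Phi$, not just for one.

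The main obstacle is thus the Orlicz fact, and the delicate point is that one must reach $E_\Phi$ rather than merely $L_\Phi$: only finiteness of the modular for \emph{every} $\lambda$ makes $u$ an admissible input under the available $E_\Phi$-admissibility. I would prove it by a distribution-function construction. Writing $\mu_g(t)=|\{g>t\}|$, so that $\int_0^\infty\mu_g(t)\,dt=\|g\|_{L^1}<\infty$, choose levels $T_n\uparrow\infty$ with $T_{n+1}\ge 2T_n$ and $\int_{T_n}^\infty\mu_g\le 2^{-n}$, and let $\Phi$ be the primitive of the increasing step density equal to $n$ on $[T_n,T_{n+1})$. Then $\Phi$ is a superlinear Young function, and a layer-cake estimate exploiting the doubling $T_{n+1}\ge 2T_n$ gives $\int_0^1\Phi(2^k g)\,ds<\infty$ for each $k\in\N$; by monotonicity this yields finiteness for all $\lambda>0$, i.e. $g\in E_\Phi$. (Alternatively one can arrange $\Phi\in\Delta_2$, whence $E_\Phi=L_\Phi$ and finiteness at $\lambda=1$ already propagates to all $\lambda$.)
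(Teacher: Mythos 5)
Your proposal is correct and follows essentially the same route as the paper: reduce everything to admissibility plus exponential stability via Propositions \ref{proprel1} and \ref{proprel2}, dispose of \ref{thm2:it2}$\Rightarrow$\ref{thm2:it4} by the embedding $E_\Phi\hookrightarrow L^1$, and prove \ref{thm2:it4}$\Rightarrow$\ref{thm2:it2} by showing that every $u\in L^1$ lies in $E_\Phi$ for some Young function $\Phi$ depending on $u$. The one point where you genuinely diverge is how membership in $E_\Phi$ (rather than merely in the Orlicz class $\tilde L_\Phi$, which is all that Theorem \ref{theo:l1orlicz} provides) is secured: the paper invokes \cite[p.~61]{KrasnRut} to obtain a Young function satisfying the $\Delta_2$-condition with $\|u(\cdot)\|_U\in L_\Phi$ and then uses $E_\Phi=L_\Phi$ under $\Delta_2$, whereas you build $\Phi$ by hand from the distribution function of $\|u(\cdot)\|_U$ so that the modular $\int_0^1\Phi\bigl(\lambda\|u(s)\|_U\bigr)\,ds$ is finite for every $\lambda>0$. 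Your construction is sound (the doubling $T_{n+1}\ge 2T_n$ does give $\varphi(2^k\tau)\le n+k$ on $[T_n,T_{n+1})$, whence summability against the tails $2^{-n}$) and has the merit of being self-contained. Two small points to tidy up: first, your step density must also be strictly positive on $(0,T_1)$, since Definition \ref{def:YF} requires $\varphi(s)>0$ for all $s>0$; second, you tacitly use that on a bounded interval $E_\Phi$ coincides with the set of functions whose modular is finite at every scale $\lambda$ --- this is true and standard, but it is not among the facts recorded in the paper's appendix (which defines $E_\Phi$ as the $L_\Phi$-closure of $L^\infty$), so it should be stated or cited; your parenthetical fallback of arranging $\Phi\in\Delta_2$ sidesteps this and lands exactly on the paper's argument.
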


The proofs of Theorems \ref{thm:main}, \ref{thm:genLp} and  \ref{thm:main2} are given at the end of this section.

 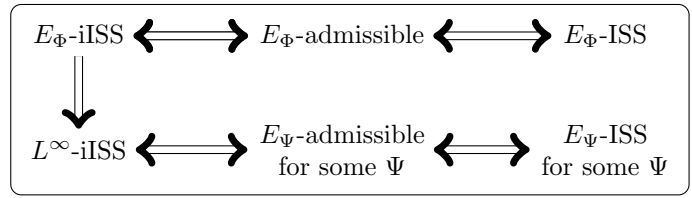
\begin{figure}[h]\center
 \begin{tikzpicture}[->,
 shorten >=1pt,auto,node distance=3.5cm, double distance=2.5pt,
framed,rounded corners]

   \node (A)                    {$E_{\Phi}$-iISS};
   \node (B) [right of=A] {$E_{\Phi}$-admissible};
   \node (C) [right of =B] {$E_{\Phi}$-ISS};
    
    \node (D) [below=1cm of A] {$L^{\infty}$-iISS};
     \node (E) [right of=D,align=center] {$E_{\Psi}$-admissible\\ for some $\Psi$};
    \node (F)  [align=center,right of=E] {$E_{\Psi}$-ISS\\ for some $\Psi$};
       \path (B) edge [<->,double]           (C);
     \path (A) edge [<->,double]    (B);
	 \path (D) edge [<->,double]           (E);
	  \path (E) edge [<->,double]           (F);
     \path (A) edge [<->,double]    (B);
       
       	  \path (A) edge [double]           (D);
 \end{tikzpicture}
 \caption{Relations between the different stability notions  with respect to Orlicz spaces for a system $\Sigma(A,B)$, where it is assumed that the semigroup is exponentially stable and that $\Phi$ satisfies the $\Delta_{2}$-condition.
 }
 \label{fig3}
 \end{figure}

\begin{lemma}\label{lem:3.24}
Let $\Sigma(A,B)$ be $L^\infty$-iISS. Then there exist $\tilde\theta, \Phi \in \mathcal K_\infty$ such that $\Phi$ is a Young function which is continuously differentiable on $(0,\infty)$ and
  \begin{equation} \label{eq:4}
       \left \Vert \int_0^t T_{-1} (s) B u(s) \, ds \right \Vert \leq \tilde\theta \left(\int_0^t \Phi (\Vert u(s) \Vert_U )\, ds \right)
  \end{equation}
for all $t>0$ and $u \in L^\infty (0,t;U)$. 
\end{lemma}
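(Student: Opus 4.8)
The plan is to start from the definition of $L^\infty$-iISS and to reduce, by means of Lemma~\ref{lem2}, the desired estimate \eqref{eq:4} for all $t>0$ to a single estimate on the unit interval. Putting $x_0=0$ in the definition of $L^\infty$-iISS and using $\beta(0,t)=0$ (since $\beta(\cdot,t)\in\mathcal K$), one obtains $\theta\in\mathcal K_\infty$ and $\mu\in\mathcal K$ with
\[
 \left\Vert \int_0^t T_{-1}(s)Bu(s)\,ds\right\Vert \le \theta\!\left(\int_0^t \mu(\Vert u(s)\Vert_U)\,ds\right)
\]
for all $t>0$ and $u\in L^\infty(0,t;U)$. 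Since $L^\infty$-iISS forces exponential stability and $L^\infty$-admissibility (Proposition~\ref{proprel1}(ii)), Lemma~\ref{lem2} applies; by part (ii) it is enough to produce a Young function $\Phi$ together with some outer function in $\mathcal K_\infty$ so that the above holds with $\mu$ replaced by $\Phi$ for $t=1$ only, the conclusion for all $t$ then being automatic. Note also that enlarging $\mu$ only enlarges the right-hand side, so I may assume $\mu\in\mathcal K_\infty$.

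The heart of the argument is a convexification of $\mu$. First I would construct a convex, continuously differentiable Young function $\Phi\in\mathcal K_\infty$ that dominates $\mu$ at infinity, i.e.\ $\mu(r)\le C\Phi(r)$ for all large $r$; this is routine, e.g.\ by integrating a (mollified) running supremum of $s\mapsto \mu(2s)/s$ and adding a fixed superlinear term to force $\Phi'\to\infty$. I would then let $\eta$ be the least concave majorant of $g:=\mu\circ\Phi^{-1}$, corrected by a bounded, strictly increasing concave term such as $y\mapsto y/(1+y)$ so that $\eta$ becomes strictly increasing. The domination $\mu(r)\le C\Phi(r)$ gives $g(y)\le Cy$ for large $y$, hence $g$ has at most linear growth; together with $g(0)=0$, continuity of $g$, and $g\to\infty$ this shows $\eta\in\mathcal K_\infty$ with $\eta$ concave and $\eta(0)=0$. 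By construction $\eta\ge g$, that is, $\mu\le\eta\circ\Phi$.

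With $\Phi$ and $\eta$ in hand the unit-interval estimate is immediate. For $u\in L^\infty(0,1;U)$, concavity of $\eta$ and Jensen's inequality on the probability space $((0,1),ds)$ yield
\[
 \int_0^1 \mu(\Vert u(s)\Vert_U)\,ds \le \int_0^1 \eta\big(\Phi(\Vert u(s)\Vert_U)\big)\,ds \le \eta\!\left(\int_0^1 \Phi(\Vert u(s)\Vert_U)\,ds\right),
\]
so that the estimate at $t=1$ becomes $\Vert\int_0^1 T_{-1}(s)Bu(s)\,ds\Vert \le (\theta\circ\eta)\big(\int_0^1 \Phi(\Vert u(s)\Vert_U)\,ds\big)$ with $\theta\circ\eta\in\mathcal K_\infty$. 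This is exactly \eqref{eq:lemma14} with inner function $\Phi$, so Lemma~\ref{lem2}(ii) shows that $\Sigma(A,B)$ is $L^\infty$-iISS with inner function $\Phi$; reading the resulting iISS estimate with $x_0=0$ gives \eqref{eq:4} for some $\tilde\theta\in\mathcal K_\infty$.

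The main obstacle is the convexification step. One cannot simply take $\Phi\ge\mu$ everywhere: near $0$ a function $\mu\in\mathcal K$ may dominate every convex function vanishing at the origin (e.g.\ $\mu(r)\sim\sqrt r$), so no convex Young function can majorize it there. The resolution is to require domination of $\mu$ by $\Phi$ only at infinity and to absorb the near-$0$ discrepancy into the concave function $\eta$ via Jensen's inequality. This is legitimate precisely because Lemma~\ref{lem2} has reduced everything to the unit interval, on which $ds$ is a probability measure; on a general interval $(0,t)$ the normalisation in Jensen's inequality would reintroduce a $t$-dependent factor and the single pair $(\Phi,\tilde\theta)$ would no longer suffice.
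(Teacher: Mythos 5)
Your proof is correct and its skeleton coincides with the paper's: both reduce, via Lemma \ref{lem2}(ii), to a single estimate of the form \eqref{eq:lemma14} on $(0,1)$; both factor $\mu$ as (concave $\mathcal{K}_\infty$-function) $\circ$ (Young function); and both then use Jensen's inequality on the probability space $(0,1)$ to pull the concave factor outside the integral. The only genuine difference is how the factorization $\mu\le\eta\circ\Phi$ is produced. The paper imports it: Lemma 14 of \cite{PraWa96} gives $\mu\le\mu_c\circ\mu_v$ with $\mu_v$ convex, $\mu_c$ concave, both $C^1$ and in $\mathcal{K}_\infty$; since $\mu_v$ need not itself be a Young function, the paper composes with an auxiliary Young function $\Psi$ and takes $\Phi:=\Psi\circ\mu_v$, $\tilde\theta:=\theta\circ\mu_c\circ\Psi^{-1}$. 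You build the factorization by hand: a Young function $\Phi$ dominating $\mu$ at infinity, then the least concave majorant $\eta$ of $g:=\mu\circ\Phi^{-1}$. This is self-contained and avoids the external lemma, at the price of having to verify that the majorant is finite (your linear-growth bound $g(y)\le Cy+D$ does give this, since an affine function is concave), that it vanishes continuously at $0$, and that it lies in $\mathcal{K}_\infty$ --- all true, but only sketched. Two small points to tighten: the running supremum of $s\mapsto\mu(2s)/s$ must be started away from $s=0$ (for $\mu(r)\sim\sqrt r$ it is identically $+\infty$ otherwise), which is consistent with your claim of domination at infinity only; and the additive correction $y/(1+y)$ is superfluous, since an unbounded nondecreasing concave function on $[0,\infty)$ is automatically strictly increasing. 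Your closing observation about why everything must be normalized to the unit interval is precisely the role Lemma \ref{lem2} plays in the paper's proof as well.
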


{
\begin{proof}
  By assumption, $(T(t))_{t\ge 0}$ is exponentially stable and there exist $\theta \in \mathcal K_\infty$ and $\mu \in \mathcal K$ such that (11) holds for $Z = L^\infty$. Without loss of generality we can assume that $\mu$ belongs to $\mathcal K_\infty$. By Lemma 14 in \cite{PraWa96} there exist a convex function $\mu_v\in \mathcal K_\infty$ and a concave function $\mu_c \in \mathcal K_\infty$ such that both are continuously differentiable on $(0, \infty)$ and $\mu \leq \mu_c \circ \mu_v$ holds on $[0, \infty)$. Now for any Young function $\Psi \colon [0,\infty) \to [0, \infty)$ it is straight forward to check that $\mu_c \circ \Psi^{-1}$ is a concave function and hence we have by Jensen's inequality
 \begin{equation*}
    \begin{split}
      \theta \left(\int_0^1 \mu (\Vert u(s) \Vert_U )\, ds \right) &\leq \theta \left(\int_0^1  \mu_c \circ \mu_v (\Vert u(s) \Vert_U )\, ds \right) \\ & \leq (\theta\circ \mu_c  \circ \Psi^{-1})\left(\int_0^1 (\Psi \circ \mu_v)(\Vert u(s) \Vert_U) \,ds  \right).
    \end{split}
  \end{equation*}
Using Remark 3.2.7 in \cite{Kufner} it is easy to see that  $\Phi:= \Psi \circ \mu_v$ is a Young function. Taking $\tilde \theta := \theta\circ \mu_c  \circ  \Psi^{-1}$ we obtain the desired estimate for $t= 1$. By Lemma \ref{lem2}, the assertion follows.
\end{proof}
}
\begin{proof}[Proof of Theorem  \ref{thm:main}]
\ref{thm:main:it1} $\Rightarrow$ \ref{thm:main:it2}: Since $\Lambda(s)=s^2$ defines a Young function with $\Lambda(1) = 1$, it can be easily seen that
\begin{equation*}
  \Phi_1(s) =
  \begin{cases}
    \Phi(s), &s< 1, \\
    \Phi(\Lambda(s)), &s\geq 1,
  \end{cases}
\end{equation*}
defines another Young function such that $\Phi \leq \Phi_1$. Furthermore, $\Phi_{1}$  increases essentially more rapidly than $\Phi$ (see Def.\ \ref{def:increase}), since the composition $\Phi \circ \Lambda$ of two Young functions $\Phi,\Lambda$ is known to be increasing essentially more rapidly than $\Phi$ (see page 114 of \cite{KrasnRut}).
We define $\theta \colon [0,\infty) \to [0,\infty)$ by
\begin{equation*}
 \theta (\alpha) = \sup\left\{ \left\Vert \int_0^1 T_{-1}(s) B u(s) \,ds  \right\Vert \Bigm| u \in L^\infty (0,1;U),\int_0^1 \Phi_{1}(\|u(s)\|_U) \,ds \leq \alpha \right\},
\end{equation*} 
for $\alpha>0$ and $\theta(0)=0$. Clearly, $\theta$ is non-decreasing.
Admissibility with respect to $E_\Phi$ and Remark \hyperref[rem:orlicz-part4]{\ref*{rem:orlicz}.\ref*{rem:orlicz-part4}}  yield that for $u\in L^{\infty}(0,1;U)$,
\[\left\|\int_{0}^{1}T_{-1}(s)Bu(s)\,ds\right\|\leq c(1)\|u\|_{E_{\Phi}(0,1;U)} \leq c(1)\left(1+\int_{0}^{1}\Phi_{1}(\|u(s)\|_U)\,ds\right).\]
Hence,  $\theta(\alpha)<\infty$ for all $\alpha\geq0$. \newline
If we can show that $\lim_{t \searrow 0}\theta(t)=0$, then, by Lemma 2.5 in \cite{CLS98}, 
there exists  $\tilde{\theta}\in \mathcal{K}_\infty$ such that $\theta\leq\tilde{\theta}$ pointwise. Therefore, let $(\alpha_{n})_{n \in \mathbb N}$ be a sequence of positive real numbers converging to $0$. By the definition of $\theta$, for any $n\in\N$ there exists $u_{n}\in L^{\infty}(0,1;U)$ such that 
\begin{equation*}
  \int_0^{1} \Phi_1(\|u_n(s)\|_U) \,ds \le\alpha_n
\end{equation*}
and 
\begin{equation}\label{thm31:eq5}
  \left|\theta(\alpha_{n})- \left \Vert \int_{0}^{1}T_{-1}(s)Bu_{n}(s) \,ds\right \Vert \right|<\frac{1}{n}.\end{equation}
Hence the sequence $(\|{u_{n}(\cdot)}\|_{U})_{n \in \N}$ is $\Phi_{1}$-mean convergent to zero (see Def.\ \ref{def:meanconv}). 
By Theorem \ref{thm:increasemorerapidly}, the sequence even converges to zero with respect to the norm of the space $L_{\Phi}(0,1)$, and thus also in $E_{\Phi}(0,1)$. Hence
\[\lim_{n \to \infty} \|{u}_{n}\|_{E_{\Phi}(0,1;U)}=\lim_{n \to \infty}\left\|\| u_n(\cdot)\|_{U}\right\|_{E_{\Phi}(0,1)} = 0,\]
 where we used Remark \hyperref[rem:orlicz-part2]{\ref*{rem:orlicz}.\ref*{rem:orlicz-part2}}. Hence, by admissibility,
\[  \left \Vert \int_0^{1} T_{-1}(s) B  u_{n}(s) \, ds \right \Vert \leq c(1) \| u_n\|_{E_\Phi(0,1;U)} \to 0, \] 
as $n\to \infty$. Altogether we obtain that
\begin{equation*}
  \begin{split}
    \theta(\alpha_n) &\leq \left| \theta(\alpha_n) - \left \Vert\int_{0}^{1}T_{-1}(s)Bu_{n}(s) \,ds \right \Vert \right| + \left \Vert\int_{0}^{1}T_{-1}(s)Bu_{n}(s) \,ds \right \Vert\\ &\leq \frac{1}{n} + c(1) \| u_n\|_{E_\Phi(0,1;U)}, 
  \end{split}
\end{equation*}
and thus, $\lim_{n \to \infty} \theta(\alpha_n) =0$.

 Therefore, there exists $\tilde{\theta}\in\mathcal{K}_{\infty}$ such that $\theta\leq\tilde{\theta}$ pointwise. Furthermore, $\Phi_{1}$ is a Young function, in particular we have $\Phi_{1} \in \mathcal K_{\infty}$. The definition of $\theta$ yields that
 \[\left\Vert \int_0^1 T_{-1}(s) B u(s) \,ds  \right\Vert \leq \theta\left(\int_0^1 \Phi_{1}(\|u(s)\|_U) \,ds \right) \leq\tilde{\theta}\left(\int_0^1 \Phi_{1}(\|u(s)\|_U) \,ds \right)\]
 for all  $u \in L^\infty(0, 1;U)$. By Lemma \ref{lem2}, we conclude that $\Sigma(A,B)$ is $L^\infty$-iISS.\\

 \ref{thm:main:it2} $\Rightarrow$ \ref{thm:main:it1}: Now assume that $\Sigma(A,B)$ is $L^\infty$-iISS. 
  We need to show that for some Young function $\Phi$ the system $\Sigma(A,B)$ is $E_\Phi$-ISS. By Proposition  \ref{proprel1}(i) it suffices to show that there is a Young function $\Phi$ such that $\int_0^t T_{-1}(s) B u(s) \, ds \in X$ for all $u \in E_\Phi(0,t)$. Note that since $E_\Phi(0,t;U) \subset L^1(0,t;U)$ for any Young function $\Phi$, the integral always exists in $X_{-1}$. By assumption, $\int_0^t T_{-1}(s) B u(s)\ ds \in X$ for all $u \in L^\infty(0,t)$. By Lemma \ref{lem:3.24}, there exist $\tilde{\theta} \in \mathcal K_\infty$ and a Young function $\Phi$ such that (\ref{eq:4}) holds. Let $u \in E_\Phi$. By definition, there is a sequence $(u_n)_{n \in \mathbb N} \subset L^\infty(0,t;U)$ such that $\lim_{n \to \infty} \|u_n -u\|_{E_{\Phi}(0,t;U)} = 0$. Since $(u_n)_{n \in \mathbb N}$ is a Cauchy sequence in $E_\Phi(0,t;U)$, we can assume without loss of generality that $\|u_n -u_m\|_{E_{\Phi}(0,t;U)} \leq 1$ for all $m,n \in \mathbb N$. By \cite[Lemma 3.8.4 (i)]{Kufner} 
   this implies that for all $n,m\in\N$,
  \begin{equation*}
  	\int_0^t \Phi (\|u_n(s)-u_m(s)\|_{U}) \,ds \leq \|u_n -u_m\|_{E_{\Phi}(0,t;U)}.
  \end{equation*}
  Together with \eqref{eq:4} and the monotonicity of $\tilde{\theta}$, this yields
\begin{equation*}
  \begin{split}
    \left\Vert \int_0^t T_{-1}(s) B (u_n(s)-u_m(s)) \,ds  \right\Vert &\leq \tilde{\theta} \left(\int_0^t \Phi (\|u_n(s)-u_m(s)\|_{U}) \,ds \right)\\ 
     &\leq \tilde{\theta}\left(\|u_n -u_m\|_{E_{\Phi}(0,t;U)}\right).
  \end{split}
\end{equation*}
Hence $(\int_0^t T_{-1}(s) B u_n(s) \,ds)_{n \in \mathbb N}$ is a Cauchy sequence in $X$ and thus converges. Let $y$ denote its limit. 
Since $E_{\Phi}(0,t;U)$ is continuously embedded in $L^{1}(0,t;U)$, see Remark  \hyperref[rem:orlicz-part3]{\ref*{rem:orlicz}.\ref*{rem:orlicz-part3}}, it follows  that
\begin{equation*}
  \lim_{n \to \infty} \int_0^t T_{-1}(s) B u_n(s) \,ds =\int_0^t T_{-1}(s) B u(s) \,ds
\end{equation*}
in $X_{-1}$. Since $X$ is continuously embedded in $X_{-1}$, we conclude that \[y = \int_0^t T_{-1}(s) B u(s) \,ds.\]
 Thus, we have shown that $\int_0^t T_{-1}(s) B u(s) \,ds \in X$ for all $u \in E_\Phi$ and hence $\Sigma(A,B)$ is admissible with respect to $E_\Phi$.\\
 
 \ref{thm:main:it1} $\Rightarrow$ \ref{thm:main:it3}: This follows since for all $u\in E_{\Phi}(0,t;U)$ it holds that $u\in \tilde{L}_{\Phi}(0,t;U)$ and 
 \[\|u\|_{E_{\Phi}} \leq 1+\int_{0}^{t}\Phi(\|u(s)\|_{U})\, ds,\]
 see Remark \hyperref[rem:orlicz-part4]{\ref*{rem:orlicz}.\ref*{rem:orlicz-part4}}. 

\ref{thm:main:it3} $\Rightarrow$ \ref{thm:main:it1}: This follows by \ref{proprel1:it3} and \ref{proprel1:it1} of Proposition \ref{proprel1}.
\end{proof}

\begin{proof}[Proof of Theorem \ref{thm:genLp}]
The implications \ref{thm:genLp2} $\Rightarrow$ \ref{thm:genLp3} $\Rightarrow$ \ref{thm:genLp1} follow, analogously as for the $L^p$-case, by Proposition \ref{proprel1}.
 
 \ref{thm:genLp1} $\Rightarrow$ \ref{thm:genLp2}: Similarly to the proof of Theorem \ref{thm:main}, we can define a non-decreasing function $\theta$ by
 \[\theta(\alpha)=\sup\left\{\left\|\int_{0}^{1}T_{-1}(s)Bu(s)\,ds\right\| \Bigm| u \in E_{\Phi}(0,1;U), \int_{0}^{1}\Phi(\|u(s)\|_U)\,ds\leq \alpha\right\},\]
for $\alpha>0$ and $\theta(0):=0$. By $E_\Phi$-admissibility and Remark \hyperref[rem:orlicz-part4]{\ref*{rem:orlicz}.\ref*{rem:orlicz-part4}}, we have that 
\[\left\|\int_{0}^{1}T_{-1}(s)Bu(s)\,ds\right\|\leq c(1)\|u\|_{E_{\Phi}(0,1;U)} \leq c(1)\left(1+\int_{0}^{1}\Phi(\|u(s)\|_U)\,ds\right),\]
for $u\in E_{\Phi}(0,1;U)\subset \tilde{L}_{\Phi}(0,t;U)$. Hence, $\theta$ is well-defined. In analogy to the proof of Theorem \ref{thm:main}, it remains to show that $\theta$ is right-continuous at $0$. This follows because $\Phi$ satisfies the $\Delta_{2}$-condition. In fact, if the latter is true, it is known that a sequence $(u_{n})_{n\in\N}$ in $E_{\Phi}$ converges to $0$ if and only if the sequence is $\Phi$-mean convergent to zero (see Def.~\ref{def:meanconv}). Therefore, $\alpha_{n}\searrow 0$ implies that there exists a sequence $u_{n}\in E_{\Phi}(0,1;U)$ that converges to $0$ in $E_{\Phi}$ and such that 
\[\left|\theta(\alpha_{n})-\left\|\int_{0}^{1}T_{-1}Bu_{n}(s)\,ds\right\|\right|\leq \frac{1}{n},\quad n\in\N.\]
By $E_{\Phi}$-admissibility, we conclude that $\theta(\alpha_{n})\to0$ as $n\to\infty$.\newline
Hence, by Lemma 2.4 in \cite{CLS98}, we find $\tilde{\theta}\in\mathcal{K}_{\infty}$ such that $\theta\leq\tilde{\theta}$ pointwise. By definition of $\theta$, this implies
\[\left\|\int_{0}^{1}T_{-1}(s)Bu(s)\,ds\right\| \leq \tilde{\theta}\left(\int_{0}^{1}\Phi(\|u(s)\|_U)\,ds\right)\]
for all $u\in E_{\Phi}(0,1;U)$. Finally, Lemma \ref{lem2} yields that $\Sigma(A,B)$ is $E_{\Phi}$-iISS.
\end{proof}

\begin{proof}[Proof of Theorem \ref{thm:main2}]
By Propositions \ref{proprel1} and \ref{proprel2}, we only need to show the equivalence of \ref{thm2:it2} and  \ref{thm2:it4}. That  \ref{thm2:it2}\ implies  \ref{thm2:it4} follows immediately since $E_\Phi$ is continuously embedded in $L^1$. \\
Conversely, let $\Sigma(A,B)$ be $E_\Phi$-admissible  for every Young function $\Phi$. According to Proposition \ref{proprel1} (a), we have to show that $\Sigma(A,B)$ is $L^{1}$-admissible. Let  $t >0$ and $u \in L^1(0,t;U)$. It remains to prove that $\int_0^t T_{-1}(s) Bu(s) \,ds \in X$. 
By \cite[p. 61]{KrasnRut}, there exists a Young function $\Phi$ satisfying the $\Delta_2$-condition such that $\|u(\cdot)\|_U \in L_\Phi$\footnote{In \cite[p.~61]{KrasnRut} it is actually shown that for given $f\in L^{1}(0,t)$, there exists a Young function $Q$ such that $f\in L_{Q\circ Q}(0,t)$ and such that $Q$  satisfies the $\Delta'$-condition, i.e., 
\[\exists c,u_{0}>0\ \forall u,v\ge u_{0}:\quad Q(uv)\le cQ(u)Q(v).\] 
 In fact, it is easy to see that this property  implies that $Q\circ Q$ satisfies
 \[\forall u\ge u_{0}:\quad(Q\circ Q)(\ell u)\leq k(\ell) (Q\circ Q)(u),\]
 for some $\ell>1$ and $k(\ell)>0$, which is known to be equivalent to $Q\circ Q$ satisfying the $\Delta_{2}$-condition, see \cite[p.~23]{KrasnRut}. }.  
The $\Delta_2$-condition implies that $E_{\Phi}=L_{\Phi}$ and $E_\Phi(0,t;U) = L_\Phi(0,t;U)$, see \cite[p.~303]{RaoOrlicz} or \cite[Thm.~5.2]{Schappacher}.  Thus $\int_0^t T_{-1}(s) Bu(s) \,ds \in X$ by assumption. 
\end{proof}

{
\begin{proposition}\label{prop:iisssuff}
 Let $\Sigma(A,B)$ be $L^\infty$-ISS. If there exist a nonnegative function  $f\in L^1(0,1)$, $\theta\in{\calK} $, a constant $c>0$ and  a Young function $\mu$  such that for every  $u\in L^1(0,1;U)$ with $ \int_0^1 f(s) \mu (\|u(s)\|_{U}) \, ds<\infty$ one has
\begin{equation*}
\left\|   \int_0^1 T_{-1} (s) B u(s) \, ds  \right\| \le c+ \theta\left( \int_0^1 f(s) \mu (\|u(s)\|_{U}) \, ds\right) ,\end{equation*}
then  $\Sigma(A,B)$ is $L^\infty$-iISS.
\end{proposition}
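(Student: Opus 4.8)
The plan is to verify the sufficient condition for $L^\infty$-iISS furnished by Lemma \ref{lem2}\ref{lem2:it2}. Since $\Sigma(A,B)$ is $L^\infty$-ISS, Proposition \ref{proprel1}\ref{proprel1:it1} already gives that $(T(t))_{t\ge0}$ is exponentially stable and that $\Sigma(A,B)$ is $L^\infty$-admissible, so it suffices to produce a Young function $\Phi_1\in\mathcal K_\infty$ and some $\tilde\theta\in\mathcal K_\infty$ with
\[ \Bigl\|\int_0^1 T_{-1}(s)Bu(s)\,ds\Bigr\| \le \tilde\theta\Bigl(\int_0^1 \Phi_1(\|u(s)\|_U)\,ds\Bigr) \qquad (u\in L^\infty(0,1;U)). \]
Throughout I write $N(u)$ for the norm on the left, and I use that $u\mapsto\int_0^1 T_{-1}(s)Bu(s)\,ds$ is linear, so $N$ is positively homogeneous and subadditive.

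First I would remove the weight $f$. Since $f\in L^1(0,1)$, the construction on p.~61 of \cite{KrasnRut} (as used in the proof of Theorem \ref{thm:main2}) provides a Young function $Q$ with $C_f:=\int_0^1 Q(f(s))\,ds<\infty$. Writing $P$ for the Young function complementary to $Q$, Young's inequality gives $f(s)\mu(r)\le P(\mu(r))+Q(f(s))$, and hence, setting $\Phi:=P\circ\mu$ (a Young function by Remark 3.2.7 in \cite{Kufner}), $\int_0^1 f(s)\mu(\|u(s)\|_U)\,ds\le C_f+\int_0^1\Phi(\|u(s)\|_U)\,ds$. Combined with the hypothesis and the monotonicity of $\theta$, this converts the assumption into the unweighted bound $N(u)\le c+\theta\bigl(C_f+\int_0^1\Phi(\|u(s)\|_U)\,ds\bigr)$, valid for every $u\in L^1(0,1;U)$ with finite right-hand side, in particular for every $u\in L^\infty(0,1;U)$.

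The decisive step is to remove the additive constant $c$. Applying the previous bound to $\lambda u$ and using $N(\lambda u)=\lambda N(u)$ yields, for all $\lambda>0$,
\[ N(u) \le \frac1\lambda\Bigl[c+\theta\Bigl(C_f+\int_0^1\Phi(\lambda\|u(s)\|_U)\,ds\Bigr)\Bigr]. \]
Following the proof of Theorem \ref{thm:main}, I then choose a Young function $\Phi_1\ge\Phi$ that increases essentially more rapidly than $\Phi$ (e.g.\ $\Phi_1=\Phi\circ\Lambda$ with $\Lambda(s)=s^2$ for $s\ge1$) and set $\theta_0(\alpha):=\sup\{N(u):u\in L^\infty(0,1;U),\ \int_0^1\Phi_1(\|u(s)\|_U)\,ds\le\alpha\}$ with $\theta_0(0)=0$. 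Finiteness of $\theta_0$ is immediate from the unweighted bound together with $\Phi\le\Phi_1$. To obtain $\lim_{\alpha\searrow0}\theta_0(\alpha)=0$, I take $u_n$ with $\int_0^1\Phi_1(\|u_n\|_U)\le\alpha_n\searrow0$ and $N(u_n)\ge\theta_0(\alpha_n)-1/n$, and (for $u_n\neq0$) define $\lambda_n\ge1$ by $\int_0^1\Phi(\lambda_n\|u_n(s)\|_U)\,ds=1$; the scaled bound then gives $N(u_n)\le\bigl(c+\theta(C_f+1)\bigr)/\lambda_n$.

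Everything thus reduces to showing $\lambda_n\to\infty$. If $\lambda_n\le\Lambda$ held along a subsequence, then $1\le\int_0^1\Phi(\Lambda\|u_n\|_U)\,ds$, so the crux is the claim that $\int_0^1\Phi_1(\|u_n\|_U)\to0$ forces $\int_0^1\Phi(\Lambda\|u_n\|_U)\to0$ for each fixed $\Lambda$; this is the step I expect to be the main obstacle. I would settle it by splitting the domain according to the size of $\|u_n(s)\|_U$: the large-value part is controlled because $\Phi_1$ increases essentially more rapidly than $\Phi$ (so $\Phi(\Lambda r)\le\varepsilon\,\Phi_1(r)$ for $r$ large), the moderate-value part is controlled because $\int_0^1\Phi_1(\|u_n\|_U)\to0$ forces $\|u_n\|_U\to0$ in measure, and the small-value part is controlled by continuity of $\Phi$ at $0$. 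Granting $\lambda_n\to\infty$ we obtain $\theta_0(\alpha_n)\to0$, so by Lemma~2.4 (resp.~2.5) in \cite{CLS98} there is $\tilde\theta\in\mathcal K_\infty$ with $\theta_0\le\tilde\theta$ pointwise; by the definition of $\theta_0$ this gives the displayed modular estimate, and Lemma \ref{lem2}\ref{lem2:it2} then yields that $\Sigma(A,B)$ is $L^\infty$-iISS.
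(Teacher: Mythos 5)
Your proof is correct, and its second half takes a genuinely different route from the paper. The first step is identical: the paper also absorbs the weight $f$ by choosing, via Theorem \ref{theo:l1orlicz}, a Young function $\Psi$ with $\int_0^1\Psi(f(s))\,ds<\infty$, passing to its complementary function $\tilde\Phi$, and setting $\Phi:=\tilde\Phi\circ\mu$, so that Young's inequality turns the hypothesis into $\bigl\|\int_0^1T_{-1}(s)Bu(s)\,ds\bigr\|\le c+\theta\bigl(\int_0^1\Psi(f)\,ds+\int_0^1\Phi(\|u(s)\|_U)\,ds\bigr)$. But the paper then stops: this bound shows the integral lies in $X$ for every $u\in E_\Phi(0,1;U)\subset\tilde{L}_\Phi(0,1;U)$, i.e.\ $\Sigma(A,B)$ is $E_\Phi$-admissible, whence Proposition \ref{proprel1} (using exponential stability from $L^\infty$-ISS) gives $E_\Phi$-ISS and the already-proved Theorem \ref{thm:main} delivers $L^\infty$-iISS. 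All the machinery you rebuild by hand --- the auxiliary Young function $\Phi_1$ increasing essentially more rapidly than $\Phi$, the supremum definition of $\theta_0$, the limit $\theta_0(\alpha_n)\to0$, the $\mathcal{K}_\infty$-majorant from \cite{CLS98}, and the reduction via Lemma \ref{lem2} --- is already packaged inside the proof of Theorem \ref{thm:main}. The genuinely new ingredient in your argument is how you force $\theta_0(\alpha_n)\to0$: the paper's route gets a homogeneous bound $\|\int_0^1T_{-1}(s)Bu_n(s)\,ds\|\le c(1)\|u_n\|_{E_\Phi}$ for free from admissibility and the closed graph theorem and then invokes Theorem \ref{thm:increasemorerapidly}, whereas you kill the additive constant $c$ by exploiting the homogeneity of $u\mapsto\int_0^1T_{-1}(s)Bu(s)\,ds$ together with the normalization $\int_0^1\Phi(\lambda_n\|u_n(s)\|_U)\,ds=1$; your three-way splitting argument showing $\lambda_n\to\infty$ is in effect a correct hands-on proof of the relevant instance of Theorem \ref{thm:increasemorerapidly}. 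Both routes are sound: the paper's buys brevity by recycling Theorem \ref{thm:main}, while yours is self-contained at the cost of reproving its hard direction with the constant $c$ carried along.
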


\begin{proof} By Theorem \ref{thm:main} and Proposition \ref{proprel1} it is sufficient to show that  there is a Young function $\Phi$ such that the system $\Sigma(A,B)$ is $E_\Phi$-admissible. Theorem \ref{theo:l1orlicz} implies that  there exists a Young function $\Psi$ such that $f\in \tilde L_{\Psi}(0,1)$. Let $\tilde \Phi$ be the complementary Young function to $\Psi$. We define the Young function $\Phi$ by $\Phi:= \tilde\Phi\circ \mu$. Using 
Remark \ref{rem:A:rhoOrlicz} for  $u\in E_\Phi(0,1;U)$  we obtain 
\begin{align*}
\left\|   \int_0^1 T_{-1} (s) B u(s) \, ds  \right\| \le &\, \, c+  \theta\left(\int_0^1 f(s) \mu (\|u(s)\|_{U}) \, ds \right)\\
\le & \,\,c+  \theta\left( \int_0^1 \Psi(f(s))\, ds + \int_0^1 \tilde\Phi(\mu(\|u(s)\|_U)\, ds\right).
\end{align*}
This shows that for all $u \in E_\Phi(U)$ we have 
\begin{equation*} 
        \int_0^1 T_{-1} (s) B u(s) \, ds   \in X,
  \end{equation*}
 that is, $\Sigma(A,B)$ is $E_\Phi$-admissible.
\end{proof}
}

\section{Stability of parabolic diagonal systems} \label{sec:diag.syst}
In the previous section we have proved that for infinite-dimensional systems $L^\infty$-iISS implies $L^\infty$-ISS. It is an open question whether the converse implication holds. Here, we give a positive answer for parabolic diagonal systems, which are a well-studied class of systems in the literature, see e.g.\ \cite{TucWei09}.\\
Throughout this section  we assume that $U=\mathbb C$, $1\leq q < \infty$ and that the operator $A$ possesses a $q$-Riesz basis of eigenvectors $(e_n)_{n\in \mathbb N}$ with eigenvalues $(\lambda_n)_{n\in\mathbb N}$ lying in a sector in the open left half-plane $\mathbb C_-$. More precisely,  $(e_n)_{n\in \mathbb N}$ is a $q$-Riesz basis of $X$, if $(e_n)_{n\in \mathbb N}$ is a Schauder basis and for some constants $c_1,c_2>0$ we have
\[ c_1\sum_{k} |a_k|^q \le \left\| \sum_{k} a_k e_k \right\|^q\le  c_2 \sum_{k} |a_k|^q\]
for all sequences $(a_k)_{k\in\mathbb N}$ in $\ell^{q}=\ell^q(\N)$. Thus without loss of generality we can assume that $X=\ell^q$ and  that $(e_n)_{n\in \mathbb N}$ is the canonical basis of $\ell^q$. We further assume that the sequence $(\lambda_n)_{n\in\mathbb N}$ lies in $\mathbb C$ with $\sup_{n}\Re( \lambda_{n})<0$ and that there exists a constant $k>0$ such that $|\text{Im}\, \lambda_{n}|\le k|\text{Re}\, \lambda_{n}|$, $n\in \mathbb N$,  i.e., $(\lambda_{n})_{n}\subset \mathbb C \setminus S_{\pi/2+\theta}$ for some $\theta\in(0,\pi/2)$, where
\[S_{\pi /2 + \theta}=\{z\in\C \mid |z|>0, |\arg z|<\pi/2+\theta\}.\]
 Then the linear operator $A \colon D(A)\subset \ell^q\rightarrow \ell^q$, given by 
\[ Ae_n = \lambda_n e_n, \qquad n\in\mathbb N,\]
and $D(A)= \{(x_n)\in \ell^q \mid \sum_{n} |x_n \lambda_n|^q <\infty\}$, generates an analytic exponentially stable $C_0$-semigroup $(T(t))_{t\ge 0}$ on $\ell^q$, which is given by
$T(t)e_n= {\rm{e}}^{t\lambda_n}e_n$. An easy calculation shows that the extrapolation space $(\ell^q)_{-1}$ is given by 
\begin{align*}(\ell^q)_{-1}={}&\left\{x=(x_{n})_{n\in\N}\:|\: \sum_{n}\frac{|x_{n}|^{q}}{|\lambda_{n}|^{q}}<\infty\right\},\\ \|x\|_{X_{-1}}={}&\|A^{-1}x\|_{\ell^{q}}.\end{align*}
Thus the linear bounded operator $B$ from $\mathbb C$ to $(\ell^q)_{-1}$ can be identified with a sequence $(b_n)_{n\in \mathbb N}$  in $\mathbb C$ satisfying
\[ \sum_{n\in\N} \frac{|b_{n}|^{q}}{|\lambda_{n}|^{q}}<\infty.\]
Thanks to the sectoriality condition for $(\lambda_n)_{n\in\mathbb N}$ this equivalent to
\[ \sum_{n\in\N} \frac{|b_{n}|^{q}}{|\Re\lambda_{n}|^{q}}<\infty.\]

The following result shows that, under the above assumptions, the system $\Sigma(A,B)$ is $L^\infty$-iISS. Thus for this class of systems $L^\infty$-iISS is equivalent to $L^\infty$-ISS, and both notions are implied by  $B\in  (\ell^q)_{-1}$, that is, 
$\sum_{n} \frac{|b_{n}|^{q}}{|\lambda_{n}|^{q}}<\infty$.   The following theorem generalizes the main result in \cite{CDCJNPS16}, where the case $q=2$ is studied.

\begin{theorem}\label{theodiag}
Let $U=\mathbb C$, and assume that the operator $A$ possesses a $q$-Riesz basis of $X$ that consists  of eigenvectors $(e_n)_{n\in \mathbb N}$  with eigenvalues $(\lambda_n)_{n\in\mathbb N}$ lying in a sector in the open left half-plane $\mathbb C_-$ with  $\sup_{n}\Re( \lambda_{n})<0$ and $B\in {\calL}(\C,X_{-1})$.
Then the system  $\Sigma(A,B)$ is $L^\infty$-iISS, and hence also $L^\infty$-ISS and $L^\infty$-zero-class admissible.
\end{theorem}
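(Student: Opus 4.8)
The plan is to reduce the statement, via Theorem~\ref{thm:main} and Proposition~\ref{proprel1}, to producing one scalar integral estimate of exactly the shape demanded by Proposition~\ref{prop:iisssuff}. Since the diagonal semigroup is analytic with $\sup_n\Re\lambda_n<0$, it is exponentially stable, so it suffices to work on $[0,1]$ with $x_0=0$. Identifying $B\cong(b_n)_n$ and using $T_{-1}(s)e_n=\mathrm{e}^{s\lambda_n}e_n$, the $n$-th coordinate of $\int_0^1 T_{-1}(s)Bu(s)\,ds$ is $b_n\int_0^1\mathrm{e}^{s\lambda_n}u(s)\,ds$, so, writing $\alpha_n:=-\Re\lambda_n\ge\delta:=-\sup_m\Re\lambda_m>0$,
\[ \Bigl|\,b_n\int_0^1\mathrm{e}^{s\lambda_n}u(s)\,ds\,\Bigr| \le |b_n|\int_0^1 \mathrm{e}^{-s\alpha_n}|u(s)|\,ds. \]

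The key step is a coordinatewise H\"older estimate that lets the sum over $n$ collapse into a single scalar weight. For $q>1$ with conjugate exponent $q'$, I would split $\mathrm{e}^{-s\alpha_n}=\mathrm{e}^{-s\alpha_n/q'}\mathrm{e}^{-s\alpha_n/q}$ and apply H\"older to get
\[ \Bigl(\int_0^1\mathrm{e}^{-s\alpha_n}|u(s)|\,ds\Bigr)^{q} \le c_n^{\,q-1}\int_0^1 \mathrm{e}^{-s\alpha_n}|u(s)|^{q}\,ds,\qquad c_n:=\int_0^1\mathrm{e}^{-s\alpha_n}\,ds\le\tfrac1{\alpha_n}. \]
Summing the $q$-th powers over $n$ and interchanging sum and integral (Tonelli, everything nonnegative) then yields
\[ \Bigl\| \int_0^1 T_{-1}(s)Bu(s)\,ds \Bigr\|_{\ell^q}^{q} \le \int_0^1 g(s)\,|u(s)|^{q}\,ds,\qquad g(s):=\sum_{n}\frac{|b_n|^{q}}{\alpha_n^{\,q-1}}\,\mathrm{e}^{-s\alpha_n}. \]

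It then remains to check the integrability of $g$ and to invoke the proposition. Integrating term by term gives $\int_0^1 g(s)\,ds\le\sum_n |b_n|^q/\alpha_n^{q}$, which is finite: by the sectoriality bound $|\lambda_n|\le\sqrt{1+k^2}\,\alpha_n$ this sum is comparable to $\sum_n |b_n|^q/|\lambda_n|^q=\|B\|_{X_{-1}}^q<\infty$, the latter being exactly the hypothesis $B\in\mathcal L(\mathbb C,X_{-1})$. Hence $g\in L^1(0,1)$ and $g\ge0$; specializing the displayed bound to $|u(s)|\le\|u\|_\infty$ also gives $\|\int_0^1 T_{-1}(s)Bu(s)\,ds\|_{\ell^q}\le \|g\|_{L^1}^{1/q}\|u\|_\infty$, so with exponential stability and Proposition~\ref{proprel1} the system is $L^\infty$-ISS, furnishing the standing hypothesis of Proposition~\ref{prop:iisssuff}. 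Applying that proposition with $f=g$, the Young function $\mu(r)=r^{q}$, $\theta(r)=r^{1/q}\in\mathcal K$ and any $c>0$ then delivers $L^\infty$-iISS; the remaining conclusions ($L^\infty$-ISS and $L^\infty$-zero-class admissibility) follow from Propositions~\ref{proprel1} and~\ref{prop:zeroclass}. The case $q=1$ is identical, with $\mu(r)=r$ (the H\"older step being vacuous), which still yields a genuine Young function through the composition $\tilde\Phi\circ\mu$ inside the proof of Proposition~\ref{prop:iisssuff}.

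I expect the only real obstacle to be the weight $g$. The coordinatewise H\"older split is precisely what forces the correct homogeneity (exponent $q$), so that the $n$-sum factors through a single scalar integral; but the resulting weight is merely integrable, not bounded. This is exactly why a plain $L^\infty$ estimate (or a direct use of Lemma~\ref{lem2}, which carries no weight) does not suffice, and why the Orlicz-space detour of Proposition~\ref{prop:iisssuff}---absorbing $f(s)\mu(|u(s)|)$ by Young's inequality $f\mu(|u|)\le\Psi(f)+\tilde\Phi(\mu(|u|))$ with a complementary pair $(\Psi,\tilde\Phi)$---is essential.
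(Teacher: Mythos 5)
Your proof is correct and follows essentially the same route as the paper: the identical coordinatewise H\"older split produces the weight $g(s)=\sum_n |b_n|^q|\Re\lambda_n|^{1-q}\mathrm{e}^{\Re\lambda_n s}$ (the paper's $f$), whose integrability is exactly $B\in X_{-1}$ via sectoriality, followed by the same appeal to Proposition~\ref{prop:iisssuff} with $\mu(r)=r^q$. The only cosmetic difference is that the paper normalizes the measure $|\Re\lambda_n|\mathrm{e}^{\Re\lambda_n s}\,ds$ before applying H\"older instead of carrying the constant $c_n=\int_0^1\mathrm{e}^{-s\alpha_n}\,ds$ explicitly.
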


\begin{rem}
In the situation of Theorem
 \ref{theodiag}, $\Sigma(A,B)$ is  $L^\infty$-iISS if and only if $\Sigma(A,B)$ is  $L^\infty$-ISS.
\end{rem}

\begin{proof}[Proof of Theorem \ref{theodiag}] Without loss of generality we may assume $X=\ell^q$ and  that $(e_n)_{n\in \mathbb N}$ is the canonical basis of $\ell^q$. Let $f \colon (0,\infty) \to [0,\infty)$ be defined by \[f(s) = \sum_{n \in \mathbb N} \frac{|b_n|^q}{|\Re \lambda_n|^{q-1}} {\rm{e}}^{\Re \lambda_n s}.\]
Then it is easy to see that $f$ belongs to $L^1(0,\infty)$.
{Now for $u\in L^1(0,1)$ with  $\int_0^1 f(s) |u(s)|^q \,ds < \infty$ we obtain (denoting by  $q'$ the H\"older conjugate of $q$)}
{\allowdisplaybreaks 
\begin{align*}
   \left\Vert \int_0^1 T_{-1}(s)B u(s) \,ds  \right\Vert_{\ell^q}^q &= \sum_{n \in \mathbb N} |b_n|^q \left \vert \int_0^1 {\rm{e}}^{\lambda_n s} u(s) \,ds\right \vert^q\\
 & \hspace*{-2.3cm} \leq \sum_{n \in \mathbb N} |b_n|^q \left ( \int_0^1 {\rm{e}}^{\Re \lambda_n s} |u(s)| \,ds\right)^q \\
 & \hspace*{-2.3cm}= \sum_{n \in \mathbb N} \frac{|b_n|^q}{(\Re \lambda_n)^q} \left ( \int_0^1 |\Re \lambda_n|{\rm{e}}^{\Re \lambda_n s} |u(s)| \,ds\right)^q \\
  & \hspace*{-2.3cm}\leq \sum_{n \in \mathbb N} \frac{|b_n|^q}{(\Re \lambda_n)^q} \left ( \int_0^1 |\Re \lambda_n|{\rm{e}}^{\Re \lambda_n s} |u(s)|^q \,ds\right) \left(\int_0^1 |\Re \lambda_n|{\rm{e}}^{\Re \lambda_n s}  \,ds\right)^{q/q'} \\ 
  & \hspace*{-2.3cm}\leq \sum_{n \in \mathbb N} \frac{|b_n|^q}{|\Re \lambda_n|^q} \left ( \int_0^1 |\Re \lambda_n|{\rm{e}}^{\Re \lambda_n s} |u(s)|^q \,ds\right) \\
   & \hspace*{-2.3cm}= \int_0^1  \sum_{n \in \mathbb N} \frac{|b_n|^q}{|\Re \lambda_n|^{q-1}} {\rm{e}}^{\Re \lambda_n s} |u(s)|^q \,ds \\
    & \hspace*{-2.3cm}=  \int_0^1  f(s) |u(s)|^q \,ds\\ 
  & \hspace*{-2.3cm} < \infty. 
\end{align*}}
{This shows that the system $\Sigma(A,B)$ is $L^\infty$-ISS and the claim now follows from Proposition \ref{prop:iisssuff}.}
\end{proof}
{
\begin{rem}\label{rem:delta2}
Theorem \ref{theodiag} states that $L^\infty$-admissibility implies $E_{\Phi}$-admissibility for some Young function $\Phi$ in the case of parabolic diagonal systems. A natural question is whether $\Phi$ can always be chosen such that the $\Delta_2$-condition is satisfied. Looking at the proof and having in mind that $L^{1}$ equals the union of all spaces $E_{\Psi}$ where $\Psi$ satisfies the $\Delta_{2}$-condition, this could be expected. However, the answer is negative, which can be seen as follows. For a Young function $\Phi$ satisfying the $\Delta_{2}$-condition there exist constants $x_0>0$ and $p\in\N\setminus\{1\}$ such that
\begin{equation*}\Phi(x) \leq x^{p}, \qquad x>x_{0},\end{equation*}
see \cite[p.~25]{KrasnRut}. This implies that $E_{\Phi}\supset L^{p}$, see e.g.~\cite[Sec.~3.17]{Kufner}.
However, there exists Young functions that do not satisfy the latter estimate, e.g., $\Phi(x)={\rm{e}}^{x-1}-x-{\rm{e}}^{-1}$.
In Example \ref{ex27}, $\Sigma(A,B)$ is not $L^{p}$-admissible for any $p<\infty$, which, with the above reasoning, implies that the system cannot be $E_{\Phi}$-admissible for any $\Phi$ satisfying the $\Delta_2$-condition.
  \end{rem}
}

\begin{lemma}\label{lem:bddlap}
Let $\mu$ be a  {positive regular Borel} measure supported on a sector $S_\phi$ with $\phi\in(0,\frac{\pi}{2})$,
and let $1 \leq q < \infty$. Then the following are equivalent:
\begin{enumerate}[label=(\roman*)]
\item\label{itbddlap1} The Laplace transform $\mathcal{L}\colon L^\infty(0,\infty) \to L^q(\Cp,\mu)$ is bounded,
\item\label{itbddlap2}  The function $s \mapsto 1/s$ lies in $L^q(\Cp,\mu)$.
\end{enumerate}
\end{lemma}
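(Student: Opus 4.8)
The plan is to prove both implications directly from elementary properties of the Laplace transform, the only geometric input being that the support of $\mu$ lies strictly inside the right half-plane. Throughout I use that for $u\in L^\infty(0,\infty)$ the function $(\mathcal{L}u)(s)=\int_0^\infty e^{-st}u(t)\,dt$ is holomorphic (hence Borel measurable) on $\Cp$ and satisfies the pointwise estimate
\[
|(\mathcal{L}u)(s)|\le \|u\|_{L^\infty}\int_0^\infty e^{-(\Re s)t}\,dt=\frac{\|u\|_{L^\infty}}{\Re s},\qquad \Re s>0.
\]

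For \ref{itbddlap2}$\Rightarrow$\ref{itbddlap1} I would exploit the sector condition to convert the $1/\Re s$ bound into a $1/|s|$ bound. Since $\mu$ is supported on $S_\phi$ with $\phi\in(0,\tfrac{\pi}{2})$, for $\mu$-almost every $s$ we have $|\arg s|\le\phi$, and as $\cos$ is decreasing on $[0,\tfrac{\pi}{2}]$ this gives $\Re s=|s|\cos(\arg s)\ge |s|\cos\phi$, hence $1/\Re s\le (\cos\phi)^{-1}|1/s|$. Combined with the pointwise estimate this yields $|(\mathcal{L}u)(s)|\le (\cos\phi)^{-1}\|u\|_{L^\infty}\,|1/s|$ for $\mu$-a.e.\ $s$; raising to the $q$-th power and integrating against $\mu$ gives
\[
\|\mathcal{L}u\|_{L^q(\Cp,\mu)}\le \frac{1}{\cos\phi}\,\Big\|\tfrac1s\Big\|_{L^q(\Cp,\mu)}\,\|u\|_{L^\infty}.
\]
Under \ref{itbddlap2} the right-hand norm is finite, so $\mathcal{L}$ is bounded.

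For \ref{itbddlap1}$\Rightarrow$\ref{itbddlap2} I would simply test the operator on the constant function $u\equiv 1$, which lies in $L^\infty(0,\infty)$ with $\|1\|_{L^\infty}=1$ and whose Laplace transform is exactly $(\mathcal{L}1)(s)=\int_0^\infty e^{-st}\,dt=1/s$ on $\Cp$. Boundedness of $\mathcal{L}$ then gives $\|1/s\|_{L^q(\Cp,\mu)}=\|\mathcal{L}1\|_{L^q(\Cp,\mu)}\le\|\mathcal{L}\|<\infty$, which is precisely \ref{itbddlap2}.

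There is no substantial obstacle here; the entire content of the lemma sits in the sector hypothesis. The point worth stressing is that the equivalence would break down for a measure charging neighbourhoods of the imaginary axis, where $1/\Re s$ can be arbitrarily large compared with $1/|s|$. The comparison $\Re s\ge |s|\cos\phi$ — which is exactly where $\phi<\tfrac{\pi}{2}$ enters — is the only place the geometry is used, and it is precisely what makes $L^q$-integrability of $1/s$ both necessary and sufficient.
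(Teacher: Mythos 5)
Your proof is correct and follows essentially the same route as the paper: testing on the constant function $u\equiv 1$ for \ref{itbddlap1}$\Rightarrow$\ref{itbddlap2}, and combining the pointwise bound $|(\mathcal{L}u)(s)|\le\|u\|_\infty/\Re s$ with the sectoriality estimate $\Re s\ge|s|\cos\phi$ for the converse. The only difference is that you make the constant $M=1/\cos\phi$ explicit where the paper leaves it unnamed.
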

\begin{proof}
\ref{itbddlap1}  $\Rightarrow$ \ref{itbddlap2}: Taking $f(t)=1$ for $t \ge 0$ we have that $\mathcal{L} f(s)=1/s$ and the result follows.\\
\ref{itbddlap2} $\Rightarrow$ \ref{itbddlap1}: For $f \in L^\infty(0,\infty)$  and $s \in \Cp$ we have
\[
\left| \int_0^\infty f(t) {\rm{e}}^{-st} \, dt \right| \le \|f\|_\infty \int_0^\infty |{\rm{e}}^{-st}| \, dt \le \|f\|_\infty/(\Re s) \le M \|f\|_\infty /|s|,
\]
where $M$ is a constant depending only on $\phi$.
Now Condition \ref{itbddlap2} implies that $\mathcal{L}$ is bounded.
\end{proof}
\begin{theorem}\label{thm:diagGenAna}
Suppose $A$ possesses a $q$-Riesz basis of $X$ consisting of eigenvectors $(e_n)_{n\in \mathbb N}$  with eigenvalues $(\lambda_n)_{n\in\mathbb N}$ lying in a sector in the open left half-plane $\mathbb C_-$ and $B\in X_{-1}$.
Then the following assertions are equivalent.
 
 \begin{enumerate}[label=(\roman*)]
\item \label{itThm251}$B$ is infinite-time admissible for $L^{\infty}$,
\item  \label{itThm252}$\sup_{\lambda\in\Cp}\|(\lambda-A)^{-1}B\|<\infty,$
\item  \label{itThm253}The function $s \mapsto 1/s$ lies in $L^q(\Cp,\mu)$,
where $\mu$ is the measure $\sum |b_k|^2 \delta_{-\lambda_k}$.\end{enumerate} 
\end{theorem}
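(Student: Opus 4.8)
The plan is to funnel all three conditions through the Laplace transform and then invoke Lemma~\ref{lem:bddlap}. As usual, assume $X=\ell^q$ with canonical basis $(e_n)_n$ and identify $B$ with the sequence $(b_n)_n$. For $u\in L^\infty(0,\infty)$ set
\[
\Phi_\infty u:=\Big(b_n\int_0^\infty {\rm e}^{\lambda_n s}u(s)\,ds\Big)_n=\big(b_n(\mathcal L u)(-\lambda_n)\big)_n,
\]
each integral converging since $\Re\lambda_n<0$. As the $\lambda_n$ lie in a sector of $\C_-$, the points $-\lambda_n$ lie in a sector $S_\phi\subset\Cp$ with $\phi\in(0,\pi/2)$, so
\[
\mu:=\sum_{n}|b_n|^q\,\delta_{-\lambda_n}
\]
is a positive regular Borel measure supported on $S_\phi$, and
\[
\|\Phi_\infty u\|_{\ell^q}^q=\sum_n|b_n|^q\,|(\mathcal L u)(-\lambda_n)|^q=\int_{\Cp}|(\mathcal L u)(s)|^q\,d\mu(s)=\|\mathcal L u\|_{L^q(\Cp,\mu)}^q .
\]
Hence $\Phi_\infty\colon L^\infty(0,\infty)\to\ell^q$ is bounded if and only if $\mathcal L\colon L^\infty(0,\infty)\to L^q(\Cp,\mu)$ is bounded; this identity is the crux of the argument. (Here I use the measure with exponent $q$, which makes the output norm an $L^q(\mu)$-norm; for $q=2$ this is the measure of \ref{itThm253}.)

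First I would prove \ref{itThm251}$\Leftrightarrow$\ref{itThm253}. The technical point is that infinite-time $L^\infty$-admissibility is equivalent to boundedness of $\Phi_\infty$. Indeed, for $u$ supported in $(0,t)$ one has $\int_0^t T_{-1}(s)Bu(s)\,ds=\Phi_\infty u$, so $c(t)\le\|\Phi_\infty\|$ and thus $c_\infty\le\|\Phi_\infty\|$. Conversely, for arbitrary $u\in L^\infty(0,\infty)$ the truncations $u\mathbf 1_{(0,t)}$ satisfy $\Phi_\infty(u\mathbf 1_{(0,t)})\to\Phi_\infty u$ in $\ell^q$ as $t\to\infty$ by dominated convergence, with summable majorant $(|b_n|\,\|u\|_\infty/|\Re\lambda_n|)_n$; this sequence lies in $\ell^q$ because $B\in X_{-1}$ and $|\Re\lambda_n|\ge(1+k^2)^{-1/2}|\lambda_n|$ by the sector condition. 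Passing to the limit in $\|\Phi_\infty(u\mathbf 1_{(0,t)})\|\le c_\infty\|u\|_\infty$ gives $\|\Phi_\infty\|=c_\infty$. Combined with the identity above and Lemma~\ref{lem:bddlap}, boundedness of $\Phi_\infty$ is equivalent to $s\mapsto 1/s\in L^q(\Cp,\mu)$, which is precisely \ref{itThm253}.

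Next I would establish \ref{itThm252}$\Leftrightarrow$\ref{itThm253} by comparing the two integrals directly. From $(\lambda-A)^{-1}B=(b_n/(\lambda-\lambda_n))_n$ we get, for $\lambda\in\Cp$,
\[
\|(\lambda-A)^{-1}B\|_{\ell^q}^q=\sum_n\frac{|b_n|^q}{|\lambda-\lambda_n|^q}=\int_{\Cp}\frac{d\mu(\zeta)}{|\lambda+\zeta|^q},
\]
so \ref{itThm252} asserts that this is bounded uniformly in $\lambda\in\Cp$. For \ref{itThm253}$\Rightarrow$\ref{itThm252} I would use the sector geometry: if $\zeta\in S_\phi$ and $\lambda\in\Cp$, then $|\lambda+\zeta|\ge\Re(\lambda+\zeta)\ge\Re\zeta\ge(\cos\phi)|\zeta|$, whence the integrals are bounded above by $(\cos\phi)^{-q}\int_{\Cp}|\zeta|^{-q}\,d\mu(\zeta)$, uniformly in $\lambda$. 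For \ref{itThm252}$\Rightarrow$\ref{itThm253} I would let $\lambda\searrow0$ along the positive real axis and invoke Fatou's lemma for $\zeta\mapsto|\lambda+\zeta|^{-q}$, yielding $\int_{\Cp}|\zeta|^{-q}\,d\mu(\zeta)\le\sup_{\lambda\in\Cp}\int_{\Cp}|\lambda+\zeta|^{-q}\,d\mu(\zeta)<\infty$.

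The main obstacle is the first reduction: rephrasing the supremum-over-$t$ definition of infinite-time admissibility as boundedness of the single global operator $\Phi_\infty$, which hinges on the $\ell^q$-convergence of the truncated integrals and hence on the standing hypothesis $B\in X_{-1}$ together with the sector estimate $|\Re\lambda_n|\asymp|\lambda_n|$. Once $\Phi_\infty$ and $\mu$ are in place, the identity $\|\Phi_\infty u\|_{\ell^q}=\|\mathcal L u\|_{L^q(\Cp,\mu)}$ places us exactly in the setting of Lemma~\ref{lem:bddlap}, and the remaining implications reduce to the elementary estimate $|\lambda+\zeta|\ge(\cos\phi)|\zeta|$, valid because $\mu$ is supported in a proper subsector of $\Cp$.
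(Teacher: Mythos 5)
Your proof is correct and follows essentially the same route as the paper's: both reduce (i) to the boundedness of the Laplace transform $\mathcal{L}\colon L^\infty(0,\infty)\to L^q(\Cp,\mu)$ and then invoke Lemma~\ref{lem:bddlap}, and both settle (ii)$\Leftrightarrow$(iii) via the explicit formula $\|(\lambda-A)^{-1}B\|^q=\sum_k|b_k|^q/|\lambda-\lambda_k|^q$, the sector estimate in one direction and the limit $\lambda\to 0$ in the other. The one genuine difference is that the paper obtains the first reduction by citing \cite[Thm~2.1]{JPP14}, whereas you re-derive it directly through the operator $\Phi_\infty$ and the identity $\|\Phi_\infty u\|_{\ell^q}=\|\mathcal{L}u\|_{L^q(\Cp,\mu)}$; this makes the argument self-contained for the diagonal case, and you also rightly note that the exponent in the definition of $\mu$ should be $q$ rather than $2$. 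One step to tighten: in proving $\|\Phi_\infty\|\le c_\infty$ you justify the dominated-convergence majorant $(|b_n|\,\|u\|_\infty/|\Re\lambda_n|)_n\in\ell^q$ from $B\in X_{-1}$, but $B\in X_{-1}$ only yields $\sum_n|b_n|^q/|\beta-\lambda_n|^q<\infty$ for $\beta\in\rho(A)$; this gives $\sum_n|b_n|^q/|\Re\lambda_n|^q<\infty$ only when $0\in\rho(A)$, i.e.\ under exponential stability, in which case (iii) is automatic anyway. In the general sectorial setting (where the theorem has content) you should instead apply Fatou's lemma to the componentwise limit of the truncated integrals, or first deduce (iii) from (i) by testing with $u\equiv 1$ --- the same constant-function test already used in Lemma~\ref{lem:bddlap} --- after which your majorant is available and the rest of your argument goes through unchanged.
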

\begin{proof}
By \cite[Thm~2.1]{JPP14}, admissibility is equivalent to the
boundedness of the Laplace transform $\mathcal{L} \colon L^\infty(0,\infty) \to L^q(\Cp,\mu)$,
and hence  \ref{itThm251} and  \ref{itThm253} are equivalent by Lemma \ref{lem:bddlap}.
Note that
\[
\|(\lambda-A)^{-1}B\|^q = \sum_k \frac{|b_k|^q}{|\lambda-\lambda_k|^q}.\]
Now if  \ref{itThm252} holds, then \ref{itThm253} also holds, letting $\lambda \to 0$.
Conversely, if  \ref{itThm253} holds, then by sectoriality we have that
\[
\sum_{k} \frac{|b_k|^q}{|\Re \lambda_k|^q} < \infty,
\]
and hence $\sum_k {|b_k|^q}/{|\lambda-\lambda_k|^q}$ is bounded
independently of $\lambda \in \Cp$, that is,  \ref{itThm252} holds.
\end{proof}

\begin{rem}
Let $\mathfrak{b}_{p}(X)$ denote the set of $L^{p}$-admissible control operators from $\C$ to $X$ for a given $A$. By Theorem \ref{theodiag}, we have that $\mathfrak{b}_{\infty}(X)=X_{-1}$ for exponentially stable, parabolic diagonal systems. Using \cite[Theorem 6.9]{Weiss89i}, and the inclusion of the $L^{p}$-spaces, we obtain the following chain of inclusions for $X=\ell^{q}$ with $q>1$\footnote{here, $q=1$ is also allowed if $(T^{*}(t))_{t\ge 0}$ is strongly continuous.}
\begin{equation}
X=\mathfrak{b}_{1}(X)\subset \mathfrak{b}_{p}(X) \subset \mathfrak{b}_{\infty}(X)=X_{-1}.
\end{equation}
It is not so hard to show that the equality $\mathfrak{b}_{\infty}(X)=X_{-1}$ does not hold in general if the exponential stability  is dropped. In fact, a counterexample on $X=\ell^{2}$ with the standard basis is given by $\lambda_{n}=2^{n}$, $n\in\Z$, $b_{n}=2^{n}/n$ for $n>0$ and $b_{n}=2^{n}$ for $n<0$.
\end{rem}

The relations of the different stability notions with respect to  $L^\infty$ for parabolic diagonal systems are summarized in the diagram shown in Figure \ref{fig2}.

 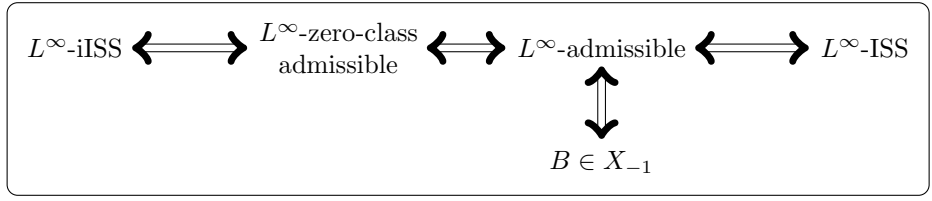
\begin{figure}\center
 \begin{tikzpicture}[->,shorten >=1pt,auto,node distance=3.5cm, double distance=2.5pt,framed,rounded corners]

    \node (D)  {$L^{\infty}$-iISS};
    \node (E) [align=center,right of=D] {$L^{\infty}$-zero-class\\ admissible};
     \node (G) [right of=E] {$L^{\infty}$-admissible};
    \node (F) [right of=G] {$L^{\infty}$-ISS};
    \node (H) [below=1cm of G] {$B\in X_{-1}$};
	 \path (D) edge [<->,double]           (E);
	  \path (G) edge [<->,double]           (F);
          \path (E) edge [<->,double]         (G);
       
       	  \path (G) edge [<->,double]           (H);
	 \end{tikzpicture}

\caption{Relations between the different stability notions for parabolic diagonal system (assuming that the semigroup is exponentially stable). 
 }
\label{fig2}
\end{figure}

\section{ Some Examples}\label{sec:ex}
\begin{example}\label{ex1}
Let us consider the following boundary control system given by the one-dimensional heat equation on the spatial domain $[0,1]$ with Dirichlet boundary control at the point $1$,
\begin{align*}
x_t(\xi,t)={}&a x_{\xi\xi}(\xi,t), \quad \xi\in(0,1),~ t>0,\\
  x(0,t)={}&0,  \quad x(1,t)=u(t), \quad t>0,\\
x(\xi,0)={}&x_{0}(\xi),
\end{align*}
where $a>0$.
It can be shown that this system can be written in the form $\Sigma(A,B)$ in \eqref{eqn1}. Here $X=L^{2}(0,1)$ and
\begin{align*}
Af={}&f'', \quad f\in D(A),\\
D(A)={}&\left\{f\in H^{2}(0,1) \mid  
f(0)= f(1)=0\right\}.
\end{align*}
Moreover, with $\lambda_{n}=-a\pi^{2}n^{2}$, \[Ae_{n}=\lambda_{n} e_{n}, \quad n\in\N,\] where the functions   $e_{n}=\sqrt{2}\sin(n\pi\cdot)$, $n\ge 1$,  form an orthonormal basis of $X$. With respect to this basis, the operator $B=a\delta_1'$ can be identified with $(b_{n})_{n\in\N}$ for $b_{n}=(-1)^n \sqrt{2}an\pi$, $n\in\N$. Therefore, 
\[\sum_{n\in\N} \frac{|b_{n}|^{2}}{|\lambda_{n}|^{2}}=\frac{1}{3}<\infty,\]
which shows that $B\in X_{-1}$. By Theorem \ref{theodiag}, we conclude that the system is $L^\infty$-iISS. Moreover, we obtain the following $L^\infty$-ISS and $L^\infty$-iISS estimates:
\begin{align*}
\|x(t)\|_{L^2(0,1)}&\le {\rm{e}}^{-a\pi^{2} t}\|x_0\|_{L^2(0,1)} +\frac{1}{\sqrt{3}} \|u\|_{L^\infty(0,t)},\\
\|x(t)\|_{L^2(0,1)}&\le {\rm{e}}^{-a\pi^{2} t}\|x_0\|_{L^2(0,1)} +c \left( \int_0^t |u(s)|^pds \right)^{1/p},
\end{align*}
for  $p> 2$ and some constant $c=c(p)>0$. For the second inequality, we used the fact that $\Sigma(A,B)$ is even $L^{p}$-admissible for $p> 2$, as it can be shown by applying Theorem 3.5 in  \cite{JPP14}. We note that a slightly weaker $L^\infty$-ISS estimate for this system can also be found in \cite{Krstic16MTNS}.
\end{example}

\begin{example}\label{ex27}
As remarked, Example \ref{ex1} provides a system $\Sigma(A,B)$ which is even $L^{p}$-admissible for $p>2$. In the following we present a system which is $L^\infty$-admissible, but not $L^p$-admissible for any $p<\infty$. In order to find such an example, we use the characterization of $L^{p}$-admissibility from \cite[Thm.~3.5]{JPP14}.\\
Let $X=\ell^{2}$ and let $(\lambda_{n})_{n\in\N}$, $(b_{n})_{n\in\N}$ define a parabolic diagonal system $\Sigma(A,B)$ as in Section \ref{sec:diag.syst}.
Furthermore, let $p\in(2,\infty)$. Then $\Sigma(A,B)$ is infinite-time $L^{p}$-admissible if and only if
\[ \left(2^{-\frac{2n(p-1)}{p}}\mu(Q_{n})\right)_{n\in\Z}\in\ell^{\frac{p}{p-2}}(\Z),\]
where $\mu=\sum_{n\in\Z}|b_{n}|^{q}\delta_{\lambda_{n}}$ and $Q_{n}=\{z\in\C \mid \Re z\in(2^{n-1},2^{n}]\}$, $n\in\Z$.\\
We choose $\lambda_{n}=-2^{n}$ and $b_{n}=\frac{2^{n}}{n}$ for $n\in\N$. Clearly, $B=(b_{n})\in X_{-1}$. Then we have that 
\[2^{-\frac{2n(p-1)}{p}}\mu(Q_{n})=2^{-\frac{2n(p-1)}{p}}\frac{2^{2n}}{n^{2}}=\frac{2^{\frac{2n}{p}}}{n^{2}},\]
and thus for $p>2$,
\[\left(\left(2^{-\frac{2n(p-1)}{p}}\mu(Q_{n})\right)^{\frac{p}{p-2}}\right)_{n\in\Z}=\left(\frac{2^{\frac{2n}{p-2}}}{n^{\frac{2p}{p-2}}}\right)_{n\in\Z}\notin\ell^{1}.\]
Hence, $\Sigma(A,B)$ is not $L^{p}$-admissible for any $p>2$, and therefore also not for any $p\geq1$. 
However, since $\sum_{n \in \mathbb N} |b_n|^2/ |\Re \lambda_n|^2 = \sum_{n \in \mathbb N} 1/n^2 < \infty$, Theorem \ref{theodiag} shows that $\Sigma(A,B)$ is $L^\infty$-iISS and, in particular infinite-time $L^{\infty}$-admissible.\newline
We observe that by Theorem \ref{thm:main}, there exists a Young function $\Phi$ such that $\Sigma(A,B)$ is $E_{\Phi}$-admissible. {However, as the system is not $L^{p}$-admissible, such $\Phi$ cannot satisfy the $\Delta_{2}$-condition, see Remark \ref{rem:delta2}.} 
\end{example}

\section{Conclusions and Outlook} \label{sec:Weissproblem}
In this paper, we have studied the relation between input-to-state stability and integral input-to-state stability  for linear infinite-dimensional systems with a  (possibly) unbounded control operator and inputs in general function spaces. {In this situation, ISS is equivalent to admissibility together with exponential stability of the semigroup.}
 We have related the notions of iISS with respect to $L^1$ and $L^\infty$ to ISS with respect to Orlicz spaces. The known result that ISS and iISS are equivalent for $L^{p}$-inputs with $p<\infty$, was generalized to Orlicz spaces that satisfy the $\Delta_{2}$-condition. 
Moreover, we have shown that for parabolic diagonal systems and scalar input, the notions of  $L^{\infty}$-iISS and $L^{\infty}$-ISS coincide. 

Among possible directions for future research are the investigation
of the non-analytic diagonal case, general analytic systems and the relation of zero-class admissibility and input-to-state stability.
{Recently, the results on parabolic diagonal systems have been adapted to more general situations of analytic semigroups -- the crucial tool being the holomorphic functional calculus for such semigroups \cite{JacSchwZwa}.
Furthermore, versions ISS and iISS for strongly stable semigroups rather than exponentially stable can be studied, see \cite{NabSchwe}.}

Finally, we mention that the existence of a counterexample for one of the unknown (converse) implications in Figure \ref{fig1} can be related to the following open question posed by G.~Weiss in \cite[Problem 2.4]{Weiss89ii}.
\medskip

\noindent
{\bf Question A:} {\it Does the mild solution $x$ belong to $C([0,\infty),X)$ for any $x_{0}\in X$ and $u\in Z=L^{\infty}(0,\infty;U)$ provided that $\Sigma(A,B)$ is $L^\infty$-admissible?}
\medskip

Although we do not provide an answer to this question, we relate it to 

\begin{proposition}
At least one of the following assertions is true.
\begin{enumerate}
\item The answer to Question A is positive for every system $\Sigma(A,B)$.
\item There exists a system $\Sigma(A_{0},B_{0})$, with $A_{0}$ generating an exponentially stable semigroup and  $\Sigma(A_{0},B_{0})$  is $L^\infty$-admissible, but not $L^\infty$-zero-class admissible.
\end{enumerate}
\end{proposition}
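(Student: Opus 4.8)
The plan is to establish the dichotomy by assuming that assertion (2) fails and deducing assertion (1). That is, I assume that \emph{every} system $\Sigma(A_0,B_0)$ whose generator $A_0$ generates an exponentially stable semigroup and which is $L^\infty$-admissible is automatically $L^\infty$-zero-class admissible, and from this I derive that Question~A has a positive answer for an arbitrary $L^\infty$-admissible system $\Sigma(A,B)$. The key device is an exponential rescaling that reduces the general (not necessarily exponentially stable) case to the exponentially stable one, which is precisely the regime that the failure of (2) constrains.

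First I would fix $\nu>0$ so large that $A_\nu:=A-\nu I$ generates an exponentially stable semigroup $T_\nu(t)={\rm{e}}^{-\nu t}T(t)$, with extrapolated semigroup $T_{\nu,-1}(s)={\rm{e}}^{-\nu s}T_{-1}(s)$. Next I would check that $\Sigma(A_\nu,B)$ remains $L^\infty$-admissible: for $u\in L^\infty(0,t;U)$ one has
\[
\int_0^t T_{\nu,-1}(s)Bu(s)\,ds=\int_0^t T_{-1}(s)B\bigl({\rm{e}}^{-\nu s}u(s)\bigr)\,ds,
\]
and since ${\rm{e}}^{-\nu s}u(s)\in L^\infty(0,t;U)$, the right-hand side lies in $X$ by the $L^\infty$-admissibility of $\Sigma(A,B)$. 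Invoking the assumption that (2) fails, $\Sigma(A_\nu,B)$ is therefore $L^\infty$-zero-class admissible, and Proposition~\ref{prop:zeroclass2} guarantees that every mild solution of $\Sigma(A_\nu,B)$ belongs to $C([0,\infty);X)$.

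It then remains to transfer this continuity back to $\Sigma(A,B)$. Given $x_0\in X$ and $u\in L^\infty(0,\infty;U)$, I would denote by $x$ the mild solution of $\Sigma(A,B)$ with input $u$, and by $x_\nu$ the mild solution of $\Sigma(A_\nu,B)$ with input $v(s):={\rm{e}}^{-\nu s}u(s)$ (note $v\in L^\infty$ since $\nu>0$). Using $T_{\nu,-1}(t-s)={\rm{e}}^{-\nu(t-s)}T_{-1}(t-s)$ and ${\rm{e}}^{-\nu(t-s)}{\rm{e}}^{-\nu s}={\rm{e}}^{-\nu t}$ one computes
\[
x_\nu(t)={\rm{e}}^{-\nu t}T(t)x_0+\int_0^t {\rm{e}}^{-\nu(t-s)}T_{-1}(t-s)B\,{\rm{e}}^{-\nu s}u(s)\,ds={\rm{e}}^{-\nu t}x(t),
\]
so that $x(t)={\rm{e}}^{\nu t}x_\nu(t)$. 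Since $x_\nu\in C([0,\infty);X)$ and $t\mapsto {\rm{e}}^{\nu t}$ is a continuous scalar factor, it follows that $x\in C([0,\infty);X)$, which is exactly the positive answer to Question~A for $\Sigma(A,B)$. As $\Sigma(A,B)$ was an arbitrary $L^\infty$-admissible system, this proves (1), and hence the dichotomy holds.

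The step I expect to require the most care is the rescaling bookkeeping. One must verify that the exponential shift preserves $L^\infty$-admissibility (which works precisely because $\nu>0$ keeps ${\rm{e}}^{-\nu\,\cdot}$ bounded by $1$ on $[0,\infty)$, so that the rescaled inputs stay in $L^\infty$ with controlled norm) and that the identity $x(t)={\rm{e}}^{\nu t}x_\nu(t)$, which \emph{a priori} is an equality of $X_{-1}$-valued integrals, may be read as an equality in $X$ once admissibility places both states in $X$; only then does multiplication by the scalar ${\rm{e}}^{\nu t}$ transfer continuity in $X$. The conceptual point, and the reason the argument closes, is that Question~A is formulated for general systems whereas the failure of (2) only controls exponentially stable ones, and the shift $A\mapsto A-\nu I$ is exactly what bridges this gap.
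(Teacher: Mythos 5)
Your proof is correct and follows essentially the same route as the paper, whose entire proof of this proposition is the single line ``This follows directly from Proposition \ref{prop:zeroclass2}'': one negates assertion (2) and invokes Proposition \ref{prop:zeroclass2} to get continuity of mild solutions. The exponential-rescaling bookkeeping you carry out (replacing $A$ by $A-\nu I$ to bridge the mismatch between Question A, which assumes no stability, and assertion (2), which concerns exponentially stable generators, and then transferring continuity back via $x(t)={\rm{e}}^{\nu t}x_\nu(t)$) is precisely the detail the paper leaves implicit, and you verify it correctly.
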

\begin{proof}
This follows directly from Proposition \ref{prop:zeroclass2}.
\end{proof}

\appendix
\section{Orlicz Spaces}
In this section we recall some basic definitions and facts about Orlicz spaces. More details can be found in \cite{KrasnRut,Kufner,Adams,Zaanen}. For the generalization to vector-valued functions see \cite[VII, Sec.~7.5]{RaoOrlicz}. 
In the following $I \subset \mathbb R$ is an open bounded interval, $U$ is a Banach space and $\Phi \colon \Rpn\to\Rpn$ is a function.
\begin{definition}
The \emph{Orlicz class} $\tilde{L}_\Phi(I;U)$ is the set of all equivalence classes (w.r.t.\ equality almost everywhere) of Bochner-measurable functions $u \colon I \to U$ such that
  \begin{equation*}
    \rho(u; \Phi) := \int_I \Phi(\|u(x)\|_{U}) \, dx < \infty.
  \end{equation*}
\end{definition}
In general, $\tilde{L}_\Phi(I;U)$ is not a vector space. Of particular interest are Orlicz classes generated by Young functions.
\begin{definition}\label{def:YF}
  A function $\Phi:[0,\infty)\rightarrow \mathbb R$ is called a \emph{Young function (or Young function generated by $\varphi$)} if
  \begin{equation*}
    \Phi(t) = \int_0^t \varphi(s) \, ds, \qquad t \geq 0,
  \end{equation*}
where the function $\varphi \colon [0, \infty) \to \mathbb R$ has the following properties:
$\varphi$ is right-continuous and 
 nondecreasing, $\varphi(0)= 0$, $\varphi(s) >0$ for $s>0$ and
$\lim_{s \to \infty}\varphi(s) = \infty$.
\end{definition}

\begin{theorem}[\protect{\cite[Thm.~3.2.3 and Thm.~3.2.5]{Kufner}}] \label{theo:l1orlicz}
  Let $\Phi$ be a Young function. Then $\tilde{L}_\Phi(I;U)$ is a
  convex set and $\tilde{L}_\Phi(I;U) \subset L^1(I;U)$. Conversely, for $u \in L^1(I;U)$  there is a Young function $\Phi$ such that $u \in \tilde{L}_\Phi(I;U)$.
\end{theorem}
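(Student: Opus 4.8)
The plan is to establish the three assertions separately; the first two are soft consequences of the definition of a Young function, while the converse requires an explicit construction of de la Vall\'ee-Poussin type. Throughout I would note that, since only the scalar function $g:=\|u(\cdot)\|_U$ enters through $\rho(u;\Phi)=\int_I\Phi(g)\,dx$, the vector-valued setting reduces at once to a statement about nonnegative measurable functions on $I$.

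First I would prove convexity and the inclusion. A Young function $\Phi(t)=\int_0^t\varphi(s)\,ds$ is convex and nondecreasing, being the primitive of the nondecreasing $\varphi$. Hence for $u,v\in\tilde L_\Phi(I;U)$ and $\lambda\in[0,1]$, combining the triangle inequality in $U$ with monotonicity and convexity of $\Phi$ gives, pointwise a.e.,
\[
\Phi(\|\lambda u(x)+(1-\lambda)v(x)\|_U)\le \lambda\,\Phi(\|u(x)\|_U)+(1-\lambda)\,\Phi(\|v(x)\|_U),
\]
and integration yields $\rho(\lambda u+(1-\lambda)v;\Phi)<\infty$, i.e. convexity. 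For $\tilde L_\Phi(I;U)\subset L^1(I;U)$ I would use $\varphi(s)\to\infty$ to fix $s_0$ with $\varphi\ge1$ on $[s_0,\infty)$, so that $\Phi(t)\ge t-s_0\ge t/2$ for $t\ge2s_0$. Splitting $I$ according to whether $g\le2s_0$ or $g>2s_0$, the first part contributes at most $2s_0\,|I|<\infty$ (here the boundedness of $I$ is used) and the second at most $2\rho(u;\Phi)<\infty$, so $g\in L^1(I)$.

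The substantive part is the converse. Given $u\in L^1(I;U)$, I would work with the distribution function $\lambda(t):=|\{x\in I: g(x)>t\}|$, which is nonincreasing, bounded by $|I|$, and satisfies the layer-cake identity $\int_0^\infty\lambda(t)\,dt=\int_I g\,dx<\infty$. Applying the same identity to $\Phi(g)$ (Tonelli, all integrands nonnegative) gives the crucial formula
\[
\int_I\Phi(g(x))\,dx=\int_0^\infty\varphi(s)\,\lambda(s)\,ds,
\]
so it suffices to construct an admissible $\varphi$ (nondecreasing, right-continuous, $\varphi(0)=0$, $\varphi>0$ on $(0,\infty)$, $\varphi\to\infty$) making the right-hand side finite. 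Since $\int_0^\infty\lambda<\infty$, the tails $\int_t^\infty\lambda(s)\,ds$ vanish as $t\to\infty$, so I can choose $0<t_1<t_2<\cdots\to\infty$ with $\int_{t_k}^\infty\lambda\le2^{-k}$. Setting $\varphi(s)=s/t_1$ on $[0,t_1)$ and $\varphi(s)=k$ on $[t_k,t_{k+1})$ for $k\ge1$ defines such a $\varphi$, and
\[
\int_0^\infty\varphi\,\lambda\le\int_0^{t_1}\lambda+\sum_{k\ge1}k\int_{t_k}^{t_{k+1}}\lambda\le\|g\|_{L^1}+\sum_{k\ge1}k\,2^{-k}<\infty .
\]
Then $\Phi(t)=\int_0^t\varphi$ is the desired Young function with $u\in\tilde L_\Phi(I;U)$.

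The hard part will be this last construction: tuning the growth of $\varphi$ slowly enough (governed by the rate at which the tails of $\lambda$ decay) that $\int_0^\infty\varphi\,\lambda$ stays finite, while simultaneously checking that the resulting step-type $\varphi$ genuinely meets all the defining properties of a Young function, both near $0$ and at infinity. The first two assertions are routine by comparison.
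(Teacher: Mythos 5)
Your argument is correct. Note that the paper itself gives no proof of this statement: it is quoted verbatim from Kufner--John--Fu\v{c}\'{\i}k (Thms.~3.2.3 and 3.2.5), so there is no internal proof to compare against; your write-up is a self-contained reconstruction along the classical lines. The first two assertions are handled exactly as one would expect (convexity and monotonicity of $\Phi$ plus the linear lower bound $\Phi(t)\ge t-s_0$ for large $t$, using $|I|<\infty$), and your reduction to the scalar function $g=\|u(\cdot)\|_U$ is legitimate for both directions since only $g$ enters $\rho(u;\Phi)$ and Bochner integrability of $u$ is equivalent to $g\in L^1(I)$. For the converse, your construction is a de la Vall\'ee-Poussin--type argument phrased through the distribution function: the identity $\int_I\Phi(g)\,dx=\int_0^\infty\varphi(s)\lambda(s)\,ds$ (Tonelli on the region $\{(x,s):s<g(x)\}$) reduces the problem to choosing $\varphi$ growing slowly relative to the decay of the tails $\int_t^\infty\lambda$, and your step function $\varphi$ satisfies all the requirements of Definition \ref{def:YF} (nondecreasing, right-continuous, vanishing only at $0$, tending to $\infty$), with $\sum_k k\,2^{-k}<\infty$ closing the estimate. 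The textbook proof works instead with the sets $\{g>a_n\}$ and the integrals of $g$ over them, but this is the same idea in a slightly different bookkeeping; your layer-cake version is arguably cleaner. The only point worth making fully explicit is the degenerate case where $g$ is essentially bounded, in which the tails vanish identically beyond some $t$ and any admissible increasing choice of the $t_k$ still works.
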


\begin{definition}
 Let $\Phi$ be the Young function generated by $\varphi$. Then $\Psi$ defined by
\begin{equation*}
 \Psi(t) = \int_0^t \psi(s) \,ds \quad \mbox{ with}\quad  \psi(t) = \sup_{\varphi(s) \leq t} s, \quad t\geq0,
\end{equation*}
is called
the \emph{complementary function} to $\Phi$.
\end{definition}

The complementary function of a Young  function is again a Young function.
 If $\varphi$ is continuous and strictly increasing in $[0, \infty)$, { i.e.\ belongs to $\mathcal K_\infty$}, then $\psi$ is the inverse function $\varphi^{-1}$ and vice versa. We call $\Phi$ and $\Psi$ a \emph{pair of complementary Young functions}. 

\begin{theorem}[Young's inequality, \protect{\cite[Thm.~I, p.~77]{Zaanen}}]
  Let $\Phi$, $\Psi$ be a pair of complementary Young functions and $\varphi$, $\psi$ their generating functions. Then 
  \begin{equation*}
    u v \leq \Phi(u)+\Psi(v),\qquad \forall u, v \in [0, \infty).
  \end{equation*}
Equality holds if and only if $v = \varphi(u)$ or $u = \psi(v)$.
\end{theorem}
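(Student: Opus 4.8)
The plan is to give the classical geometric (area) proof, reading both $\Phi(u)$ and $\Psi(v)$ as areas determined by the graph of the generating function $\varphi$. First I would note that, directly from the definition, $\Phi(u)=\int_0^u\varphi(s)\,ds$ is the area of the region $\{(s,t):0\le s\le u,\ 0\le t\le\varphi(s)\}$ lying under the graph of $\varphi$ over $[0,u]$. The structural fact I would then establish is that $\Psi(v)=\int_0^v\psi(t)\,dt$ is the area of the region $\{(s,t):0\le t\le v,\ 0\le s\le\psi(t)\}$, which, since $\psi(t)=\sup_{\varphi(s)\le t}s$ is the generalized inverse of $\varphi$, is exactly the region lying to the left of the graph of $\varphi$ below the horizontal level $t=v$. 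Because $\varphi$ is nondecreasing with $\varphi(0)=0$ and $\varphi(s)\to\infty$, the functions $\varphi$ and $\psi$ are reflections of one another across the diagonal, so these two area interpretations are mutually consistent.

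The core step is to compare the rectangle $R=[0,u]\times[0,v]$ with the union of these two regions. For any $(s,t)\in R$ one has either $t\le\varphi(s)$, placing $(s,t)$ in the first region, or $t>\varphi(s)$, in which case monotonicity forces $s\le\psi(t)$, placing $(s,t)$ in the second region. Hence $R$ is covered (up to a null set) by the two regions, and comparing areas gives $uv=\operatorname{area}(R)\le\Phi(u)+\Psi(v)$, the desired inequality.

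For the equality assertion I would track precisely when the covering wastes no area, i.e.\ when the two regions fill $R$ with an overlap of measure zero and leave no uncovered part of positive measure inside $R$. This happens exactly when the corner $(u,v)$ lies on the completed graph of $\varphi$ (the graph together with the vertical segments inserted at its jumps), and unwinding this condition yields precisely $v=\varphi(u)$ or $u=\psi(v)$.

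The main obstacle I anticipate lies not in the inequality but in the careful bookkeeping for the equality case when $\varphi$ has jump discontinuities or intervals of constancy. There $\psi$ is only a generalized inverse, so \emph{the corner lies on the graph} must be understood to include the vertical segments arising from jumps of $\varphi$ (handled by the condition $v=\varphi(u)$) and the horizontal segments arising from flats of $\varphi$ (handled by $u=\psi(v)$); checking that these two conditions together describe exactly the equality cases, and no more, is the delicate point. Everything else reduces to elementary integration of monotone functions. Since the result is quoted from \cite[Thm.~I, p.~77]{Zaanen}, one may alternatively simply cite that source, but the argument above is self-contained.
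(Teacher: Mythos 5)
The paper does not prove this statement at all: it is quoted verbatim from \cite[Thm.~I, p.~77]{Zaanen} as background material in the appendix, so there is no internal proof to compare against. Your geometric area argument is the standard classical proof and is correct in outline: the rectangle $[0,u]\times[0,v]$ is covered by the region under the graph of $\varphi$ over $[0,u]$ (area $\Phi(u)$) and the region to the left of the graph below level $v$ (area $\Psi(v)$), since $t>\varphi(s)$ forces $s\in\{s':\varphi(s')\le t\}$ and hence $s\le\psi(t)$; comparing areas gives the inequality. You are also right that the only delicate point is the equality case, and your accounting is consistent with the definitions in the paper: if $v$ falls in a jump of $\varphi$ at $u$, i.e.\ $\varphi(u^-)\le v<\varphi(u)$, then $\{s':\varphi(s')\le v\}=[0,u)$ and so $u=\psi(v)$, while a flat of $\varphi$ through $(u,v)$ is captured by $v=\varphi(u)$; conversely, $v<\varphi(u^-)$ makes the first region spill above the rectangle with positive area, and $v>\varphi(u)$ together with right-continuity of $\varphi$ forces $\psi(v)>u$ and a positive-area spill to the right, so strict inequality holds in both cases. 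A fully rigorous write-up would also note that the overlap of the two regions has measure zero (it is contained in the completed graph of $\varphi$), but this is routine. In short, your proof is sound and self-contained, whereas the paper simply defers to the literature.
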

\begin{rem}
\label{rem:A:rhoOrlicz}
  Let  $\Phi$, $\Psi$ be a pair of complementary Young functions, $u \in \tilde L_\Phi (I)$ and $v \in \tilde L_\Psi (I)$. By integrating Young's inequality we get
  \begin{equation*}
    \int_I |u(x) v(x)| \,dx \leq \rho(u;\Phi)+\rho(v;\Psi)
  \end{equation*}
 \end{rem}
We are now in the position to define the Orlicz spaces for which several equivalent definitions exist. Here we use the so-called {\em Luxemburg norm.}

\begin{definition} For a Young function $\Phi$,
 the set $L_\Phi(I;U)$ of all equivalence classes (w.r.t.\ equality almost everywhere) of Bochner-measurable functions $u \colon I \to U$ for which there is a $k > 0$ such that
 \begin{equation*}
   \int_I \Phi(k^{-1} \|u(x)\|_{U}) \, dx < \infty
 \end{equation*}
is called the \emph{Orlicz space}. The \emph{Luxemburg norm} of $u \in L_\Phi(I;U)$ is defined as 
\begin{equation*}
     \|u\|_{\Phi}:= \|u\|_{L_{\Phi}(I;U)}: =\inf\left\{k>0 \Bigm| \int_I \Phi(k^{-1}\| u(x)\|) \,ds \leq 1 \right\}.
  \end{equation*}
\end{definition}
For the choice $\Phi(t):=t^p$, $1 < p<\infty$, the Orlicz space $L_\Phi(I;U)$ equals the vector-valued $L^p$-spaces with equivalent norms.
\begin{theorem}[\protect{\cite[Thm.~3.9.1]{Kufner}}]
  $(L_\Phi (I;U),  \|\cdot\|_\Phi)$ is a Banach space.
\end{theorem}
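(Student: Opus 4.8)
The plan is to verify the two ingredients separately: that $(L_\Phi(I;U),\|\cdot\|_\Phi)$ is a normed vector space, and that it is complete. Throughout I would exploit that $\Phi$ is convex --- this is immediate from Definition \ref{def:YF}, since $\varphi$ is nondecreasing --- and continuous (indeed locally Lipschitz), being the integral of the locally bounded function $\varphi$. A small but essential preliminary observation is that the infimum defining $\|u\|_\Phi$ is attained: if $a:=\|u\|_\Phi>0$ and $k_j\searrow a$ with $\int_I\Phi(k_j^{-1}\|u(x)\|_U)\,dx\le 1$, then $\Phi(k_j^{-1}\|u(x)\|_U)\nearrow\Phi(a^{-1}\|u(x)\|_U)$ pointwise, and the monotone convergence theorem gives $\int_I\Phi(a^{-1}\|u(x)\|_U)\,dx\le 1$.

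With this in hand, the vector space structure and the norm axioms are routine. Homogeneity is clear from the definition, and positivity together with $\|u\|_\Phi=0\Leftrightarrow u=0$ a.e.\ follows since $\Phi(s)>0$ for $s>0$. For the triangle inequality, set $a=\|u\|_\Phi$, $b=\|v\|_\Phi$ (both positive, the degenerate cases being trivial); writing $\tfrac{\|u(x)\|_U+\|v(x)\|_U}{a+b}$ as the convex combination $\tfrac{a}{a+b}\cdot\tfrac{\|u(x)\|_U}{a}+\tfrac{b}{a+b}\cdot\tfrac{\|v(x)\|_U}{b}$ and using monotonicity and convexity of $\Phi$, I obtain
$$\int_I\Phi\!\left(\frac{\|u(x)+v(x)\|_U}{a+b}\right)dx\le\frac{a}{a+b}\int_I\Phi\!\left(\frac{\|u\|_U}{a}\right)dx+\frac{b}{a+b}\int_I\Phi\!\left(\frac{\|v\|_U}{b}\right)dx\le 1,$$
whence $\|u+v\|_\Phi\le a+b$; the same convexity estimate shows $L_\Phi(I;U)$ is closed under addition.

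Next I would record the continuous embedding $L_\Phi(I;U)\hookrightarrow L^1(I;U)$, which is the bridge to completeness. Let $\Psi$ be the complementary Young function. Since $I$ is bounded, the constant function $\mathbf 1$ lies in $L_\Psi(I)$, and normalising Young's inequality (Remark \ref{rem:A:rhoOrlicz}) yields the H\"older-type bound $\int_I\|u(x)\|_U\,dx\le 2\|u\|_\Phi\,\|\mathbf 1\|_\Psi$ for all $u\in L_\Phi(I;U)$; thus the embedding holds with constant $C:=2\|\mathbf 1\|_\Psi<\infty$. For completeness, let $(u_n)$ be Cauchy in $L_\Phi(I;U)$. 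By the embedding it is Cauchy in $L^1(I;U)$, which is complete as $U$ is Banach, so $u_n\to u$ in $L^1$ for some $u$; passing to a subsequence $(u_{n_k})$ I may assume $u_{n_k}\to u$ pointwise a.e., so $u$ is Bochner-measurable. To upgrade this to convergence in $L_\Phi$, fix $\varepsilon>0$ and choose $N$ with $\|u_m-u_n\|_\Phi\le\varepsilon$ for $m,n\ge N$, i.e.\ $\int_I\Phi(\varepsilon^{-1}\|u_m(x)-u_n(x)\|_U)\,dx\le 1$. Fixing $n\ge N$ and letting $m=n_k\to\infty$, the integrands converge a.e.\ to $\Phi(\varepsilon^{-1}\|u(x)-u_n(x)\|_U)$ by continuity of $\Phi$, so Fatou's lemma gives $\int_I\Phi(\varepsilon^{-1}\|u(x)-u_n(x)\|_U)\,dx\le 1$, i.e.\ $\|u-u_n\|_\Phi\le\varepsilon$ for all $n\ge N$. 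Hence $u-u_n\in L_\Phi(I;U)$, so $u=(u-u_n)+u_n\in L_\Phi(I;U)$, and $u_n\to u$ in the Luxemburg norm.

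The main obstacle is not any single estimate but the orchestration of these limiting arguments: the attainment of the infimum (via monotone convergence) is what makes the triangle inequality and the normalisation of Young's inequality work with the clean bound $1$, while Fatou's lemma applied to the $\Phi$-modular along the a.e.-convergent subsequence is precisely what converts $L^1$-convergence back into Luxemburg-norm convergence --- without ever needing $\Phi$ to satisfy a $\Delta_2$-condition.
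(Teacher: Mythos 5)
Your proof is correct: the paper does not prove this statement itself but imports it from \cite[Thm.~3.9.1]{Kufner}, and your argument --- norm axioms via convexity of $\Phi$ and the attainment of the infimum in the Luxemburg norm, then completeness via the continuous embedding $L_\Phi(I;U)\hookrightarrow L^1(I;U)$ followed by Fatou's lemma applied to the modular along an a.e.-convergent subsequence --- is exactly the standard textbook route taken there, adapted correctly to the vector-valued setting. In particular, you rightly avoid any appeal to the $\Delta_2$-condition, which is essential since the paper needs this theorem for general Young functions.
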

Clearly, $L^\infty(I, U)$ is a linear subspace of $L_\Phi(I, U)$.
\begin{definition}
The space $E_\Phi(I, U)$ is defined as  
\begin{equation*}
    E_\Phi(I, U) = \overline{L^\infty(I, U)}^{\|\cdot\|_{L_{\Phi}(I;U)}}.
  \end{equation*}
  The norm $\|\cdot\|_{E_{\Phi}(I;U)}$ refers to $\|\cdot\|_{L_{\Phi}(I;U)}$.
\end{definition}
If $U=\mathbb K$ with $\mathbb K \in\{\mathbb R,\mathbb C\}$, then we write $L_\Phi(I):=L_\Phi(I;\mathbb K)$ and $E_\Phi(I):=E_\Phi(I;\mathbb K)$ for short. The Banach spaces  $E_\Phi(I; U)$ and  $L_\Phi(I; U)$ have the following properties.
\begin{rem}\label{rem:orlicz}
  \begin{enumerate}
  \item $E_{\Phi}(I;U)$ is separable, see e.g.\ \cite[Thm.~6.3]{Schappacher}. \label{rem:orlicz-part1}
  \item For a measurable $u \colon I\to U$,  $u\in L_{\Phi}(I;U)$ if and only if  $f=\|u(\cdot)\|_U\in L_{\Phi}(I)$. This follows from the fact that $\|u\|_{\Phi}=\|f\|_{\Phi}$.
  Thus,  $(u_{n})_{n\in\N} \subset L_{\Phi}(I;U)$ converges to $0$ if and only if $(\|u_{n}(\cdot)\|_U)_{n\in\N}$ converges to $0$ in $L_{\Phi}(I)$. \label{rem:orlicz-part2}
\item { Let $\Phi$, $\Psi$ be a pair of complementary Young functions. The extension of H\"older's inequality to Orlicz spaces reads: For any $u\in L_{\Phi}(I)$ and $v\in L_{\Psi}(I)$, it holds that $uv\in L^{1}(I)$ and
$$ \int_{I}|u(s)v(s)|\,ds\leq 2\|u\|_{L_{\Phi}(I)}\|v\|_{L_{\Psi}(I)},$$
see 
\cite[Thm.~3.7.5 and Rem.~3.8.6]{Kufner}.} This implies that for $u\in L_{\Phi}(I;U)$,
  \begin{equation*}
     \|u\|_{L^1(0,t;U)} = \int_0^t \|u(s)\|_{U} \,ds \leq 2\|\chi_{(0,t)}\|_\Psi  \|u\|_\Phi,
  \end{equation*}
i.e., $L_\Phi(I;U)$ is continuously embedded in $L^1(I;U)$. Moreover, $\|\chi_{(0,t)}\|_\Psi \to0$ as $t \searrow 0$, where $\chi_{(0,t)}$ denotes the characteristic function of the interval $(0,t)$.\label{rem:orlicz-part3}
\item\label{rem:orlicz-part4} $E_{\Phi}(I;U) \subset \tilde{L}_{\Phi}(I;U)\subset L_{\Phi}(I;U),$
see e.g.\ \cite[Thm.~5.1]{Schappacher}. For $u\in \tilde{L}_\Phi(I;U)$, 
\begin{equation*}
  \|u\|_{\Phi}  \leq \rho(\|u(\cdot)\|_U;\Phi) +1 < \infty.
\end{equation*}
  \end{enumerate}
\end{rem}
\begin{definition}[$\Phi$-mean convergence]\label{def:meanconv}
  A sequence $(u_n)_{n \in \mathbb N}$ in $L_\Phi(I)$ is said to converge in $\Phi$-mean to $u \in L_\Phi(I)$ if
  \begin{equation*}
    \lim_{n \to \infty} \rho(u_n-u; \Phi) = \lim_{n \to \infty} \int_I \Phi(|u_n(x)-u(x)|) \, dx = 0.
  \end{equation*}
\end{definition}
\begin{definition}\label{def:deltatwo}
We say that a Young function $\Phi$ \textit{satisfies the $\Delta_{2}$-condition} if 
\[\exists k>0,u_{0}\geq0\ \forall u\geq u_{0}:\quad \Phi(2u)\le k \Phi(u).\]
\end{definition}

It holds that $ E_{\Phi}(I;U)= \tilde L_\Phi (I;U) = L_\Phi (I;U)$ if $\Phi$ satisfies the $\Delta_{2}$-condition.
\begin{definition}\label{def:increase}
  Let $\Phi$ and $\Phi_1$ be two Young functions. We say that the function $\Phi_1$ \emph{increases essentially more rapidly} than the function $\Phi$ if, for arbitrary $s>0$,
  \begin{equation*}
    \lim_{t \to \infty} \frac{\Phi(st)}{\Phi_1(t)} = 0.
  \end{equation*}
\end{definition}
{\begin{theorem}[\protect{\cite[Thm.~13.4]{KrasnRut}}]
 Let $\Phi, \Phi_{1}$ be Young functions such that $\Phi_{1}$ increases essentially more rapidly than $\Phi$. If $(u_n)_{n \in \mathbb N}\subset \tilde{L}_{\Phi_{1}}(I)$ converges to $0$ in $\Phi_{1}$-mean, then it also converges in the norm $\|\cdot\|_{\Phi}$.
\label{thm:increasemorerapidly}\end{theorem}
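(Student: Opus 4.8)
The plan is to reduce the Luxemburg-norm statement to the single-parameter claim that, for every fixed $s>0$,
\[ \int_I \Phi\bigl(s\,|u_n(x)|\bigr)\,dx \longrightarrow 0 \qquad (n\to\infty), \]
and then to prove this claim by splitting $I$ according to the size of $|u_n|$. The reduction is immediate from the definition of $\|\cdot\|_\Phi$: if the displayed convergence holds for the value $s=1/\varepsilon$, then $\int_I \Phi(\varepsilon^{-1}|u_n(x)|)\,dx\le 1$ for all large $n$, whence $\|u_n\|_\Phi\le\varepsilon$; since $\varepsilon>0$ is arbitrary, this gives $\|u_n\|_\Phi\to0$.

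First I would upgrade the $\Phi_1$-mean convergence to convergence in measure. Since $\Phi_1$ is a Young function, $\Phi_1(\eta)>0$ for each $\eta>0$, and a Chebyshev estimate gives
\[ \bigl|\{x\in I : |u_n(x)|\ge \eta\}\bigr| \le \frac{1}{\Phi_1(\eta)}\int_I \Phi_1(|u_n(x)|)\,dx = \frac{\rho(u_n;\Phi_1)}{\Phi_1(\eta)}. \]
Because $\rho(u_n;\Phi_1)\to0$ (Definition \ref{def:meanconv}), the right-hand side tends to $0$ for every $\eta>0$, so $(u_n)$ converges to $0$ in measure; moreover $M:=\sup_n\rho(u_n;\Phi_1)<\infty$.

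Now fix $s>0$ and a tolerance $\nu>0$, and split $I$ at two thresholds $0<\eta<T$ chosen as follows. Pick $\delta>0$ with $\delta M\le\nu/3$; since $\Phi_1$ increases essentially more rapidly than $\Phi$ (Definition \ref{def:increase}), there is $T>0$ with $\Phi(st)\le\delta\,\Phi_1(t)$ for all $t\ge T$, so that $\int_{\{|u_n|\ge T\}}\Phi(s|u_n(x)|)\,dx\le\delta\,\rho(u_n;\Phi_1)\le\delta M\le\nu/3$. Next, using $\Phi(0)=0$ and continuity of $\Phi$, choose $\eta\in(0,T)$ with $\Phi(s\eta)\,|I|\le\nu/3$ (here $|I|<\infty$ because $I$ is bounded); then $\int_{\{|u_n|<\eta\}}\Phi(s|u_n(x)|)\,dx\le\Phi(s\eta)\,|I|\le\nu/3$. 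Finally, on the middle set $\{\eta\le|u_n|<T\}$ we bound $\Phi(s|u_n|)\le\Phi(sT)$ and invoke the measure estimate above, so that $\int_{\{\eta\le|u_n|<T\}}\Phi(s|u_n(x)|)\,dx\le\Phi(sT)\,\bigl|\{|u_n|\ge\eta\}\bigr|\le\nu/3$ for all sufficiently large $n$. Summing the three contributions gives $\int_I\Phi(s|u_n(x)|)\,dx\le\nu$ for large $n$, and since $\nu>0$ was arbitrary the desired convergence for fixed $s$ follows; combined with the reduction of the first paragraph, this proves the theorem.

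The main obstacle is the small-value region $\{|u_n|<\eta\}$: there $\Phi(s|u_n|)$ is pointwise small, but it is integrated over a set that may exhaust $I$, so it cannot be controlled by mean convergence and instead forces the use of continuity of $\Phi$ at the origin together with the finiteness of $|I|$. The middle region is precisely where genuine convergence in measure—not merely $\Phi_1$-mean convergence—is required, while the rapid-increase hypothesis does its work only on the large-value region. Balancing all three simultaneously is exactly what the three-way split accomplishes, and the correct ordering of the choices ($\delta$ first, then $T$, then $\eta$) is what makes the estimates compatible.
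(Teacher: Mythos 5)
Your proof is correct. Note that the paper itself gives no proof of this statement: it is quoted verbatim from Krasnosel'ski\u{\i}--Rutickii \cite[Thm.~13.4]{KrasnRut}, so there is no internal argument to compare against; your write-up is a valid self-contained substitute, and it follows essentially the classical route of that reference. The three ingredients all check out: the reduction of $\|u_n\|_\Phi\to 0$ to the fixed-$s$ claim $\int_I\Phi(s|u_n|)\,dx\to 0$ is immediate from the definition of the Luxemburg norm (if the integral at $s=1/\varepsilon$ is eventually $\le 1$, then $\varepsilon$ is an admissible constant, so $\|u_n\|_\Phi\le\varepsilon$); the Chebyshev estimate $|\{|u_n|\ge\eta\}|\le\rho(u_n;\Phi_1)/\Phi_1(\eta)$ is legitimate because a Young function is nondecreasing with $\Phi_1(\eta)>0$ for $\eta>0$; and the three-way split is coherent, with the choices correctly ordered ($\delta$ from $M=\sup_n\rho(u_n;\Phi_1)<\infty$, then $T$ from Definition \ref{def:increase} applied to the given $s$, then $\eta$ from continuity of $\Phi$ at $0$ together with $|I|<\infty$, and only then $n$ large). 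Your closing observation is also apt: the hypothesis that $I$ is a bounded interval is genuinely used on the small-value region, the rapid-increase hypothesis only on the large-value region, and convergence in measure only on the middle one, which is exactly how the standard proof distributes the assumptions.
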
}
\section*{Acknowledgments}
The authors would like to thank Andrii Mironchenko for valuable discussions 
on ISS. They also wish to express their gratitude to Jens Wintermayr for pointing out an error in a previous version. 

\def\cprime{$'$}

\end{document}